\documentclass[a4paper,fleqn]{cas-sc}

\usepackage[T1]{fontenc}
\usepackage[utf8]{inputenc}
\usepackage{microtype}
\usepackage{amsmath,amssymb,amsthm,mathtools}
\usepackage{aliascnt}
\usepackage{enumitem}
\usepackage[numbers,sort&compress]{natbib}
\usepackage[nameinlink,noabbrev]{cleveref}
\usepackage{booktabs}

\hypersetup{
  colorlinks=true,
  linkcolor=blue,
  citecolor=blue,
  urlcolor=blue,
  pdftitle={Frobenius-orbit slicing and uniform elimination of positive-dimensional singular loci},
  pdfauthor={Author Name},
  pdfsubject={Uniform higher-jet and complete-intersection Bertini estimates},
  pdfkeywords={Bertini theorem, finite fields, Frobenius orbits, higher jets, singular loci, complete intersections}
}

\allowdisplaybreaks[3]
\numberwithin{equation}{section}

\theoremstyle{plain}
\newtheorem{theorem}{Theorem}[section]
\newaliascnt{proposition}{theorem}
\newtheorem{proposition}[proposition]{Proposition}
\aliascntresetthe{proposition}
\newaliascnt{lemma}{theorem}
\newtheorem{lemma}[lemma]{Lemma}
\aliascntresetthe{lemma}
\newaliascnt{corollary}{theorem}
\newtheorem{corollary}[corollary]{Corollary}
\aliascntresetthe{corollary}
\theoremstyle{definition}
\newaliascnt{definition}{theorem}

\aliascntresetthe{definition}
\theoremstyle{remark}
\newaliascnt{remark}{theorem}
\newtheorem{remark}[remark]{Remark}
\aliascntresetthe{remark}

\crefname{theorem}{theorem}{theorems}
\Crefname{theorem}{Theorem}{Theorems}
\crefname{proposition}{proposition}{propositions}
\Crefname{proposition}{Proposition}{Propositions}
\crefname{lemma}{lemma}{lemmas}
\Crefname{lemma}{Lemma}{Lemmas}
\crefname{corollary}{corollary}{corollaries}
\Crefname{corollary}{Corollary}{Corollaries}
\crefname{definition}{definition}{definitions}
\Crefname{definition}{Definition}{Definitions}
\crefname{remark}{remark}{remarks}
\Crefname{remark}{Remark}{Remarks}

\newcommand{\A}{\mathbf A}
\newcommand{\Pj}{\mathbf P}
\newcommand{\F}{\mathbf F}
\newcommand{\Z}{\mathbf Z}

\newcommand{\cX}{\mathscr X}
\newcommand{\cU}{\mathscr U}
\newcommand{\cI}{\mathcal I}
\newcommand{\cR}{\mathcal R}
\newcommand{\BSing}{\operatorname{Sing}^{\mathrm B}}
\newcommand{\Spec}{\operatorname{Spec}}
\newcommand{\Proj}{\operatorname{Proj}}

\newcommand{\im}{\operatorname{im}}

\newcommand{\length}{\operatorname{length}}
\newcommand{\gdeg}{\operatorname{gdeg}}
\newcommand{\Prob}{\mathbf P}

\newcommand{\one}{\mathbf 1}
\newcommand{\ceil}[1]{\left\lceil #1\right\rceil}
\newcommand{\floor}[1]{\left\lfloor #1\right\rfloor}
\newcommand{\abs}[1]{\left|#1\right|}
\newcommand{\set}[1]{\left\{#1\right\}}
\newcommand{\restr}[1]{\big|_{#1}}
\newcommand{\orb}{\mathrm O}

\begin{document}
\let\WriteBookmarks\relax
\def\floatpagepagefraction{1}
\def\textpagefraction{.001}

\shorttitle{Frobenius-orbit slicing}
\shortauthors{Yutong Zhang and Yaoran Yang}

\title[mode=title]{Frobenius-orbit slicing and uniform elimination of positive-dimensional singular loci}

\author[1]{Yutong Zhang}
\cormark[1]
\ead{yutongzhang@stu.scu.edu.cn}
\credit{Conceptualization, Methodology, Formal analysis, Writing - original draft}

\author[1]{Yaoran Yang}
\ead{yangyaoran@stu.scu.edu.cn}
\credit{Formal analysis, Validation, Writing - review and editing}

\affiliation[1]{organization={School of Mathematics, Sichuan University},
            addressline={24 First Loop Road South Section I},
            city={Chengdu},
            postcode={610064},
            state={Sichuan},
            country={China}}

\cortext[1]{Corresponding author}

\begin{abstract}
Let $\cX\subseteq\Pj^n_{\Z}$ be a fixed integral quasiprojective subscheme, smooth over $\Z$ of relative dimension $r$. For each fixed $m\ge1$, we bound the probability that the $m$th principal-parts jet of the restriction of a uniform degree-$d$ form to $\cX_p$ has a positive-dimensional zero scheme. The bound is $C(d+1)^{N_m}p^{-\lambda_m(d)}$, where $N_m=\binom{r+m}{m}$ and $\lambda_m(d)=\floor{m(d+1)/(m+1)}$. For $m=1$, this gives the Bertini singular-locus estimate $C(d+1)^{r+1}p^{-\ceil{d/2}}$. It settles Poonen's arithmetic Bertini Conjecture~5.2 and, after increasing the degree threshold, yields $p^{-A}$ for every fixed $A>0$. For $c\le r$ independent hypersurfaces, the probability of a positive-dimensional Jacobian rank-degeneracy locus is bounded both by $C\sum_i(d_i+1)^{r+1}p^{-\ceil{d_i/2}}$ and by $C'(d_{\min}+1)^{r+1}p^{-\ceil{d_{\min}/2}}$. The proof uses filtered $Q$-adic decompositions, triangular normal Taylor blocks, and Jacobian-pivot charts of uniformly controlled complexity.
\end{abstract}

\begin{keywords}
Bertini theorem \sep finite fields \sep Frobenius orbits \sep higher jets \sep singular loci \sep complete intersections
\MSC[2020]{14G15; 14J70;\\ 11T06; 14B05}
\end{keywords}

\maketitle

\section{Introduction}\label{sec:introduction}

Let $S_{d,p}=H^0(\Pj^n_{\F_p},\mathcal O_{\Pj^n}(d))$, equipped with the uniform distribution, and let $H_f\subseteq\Pj^n_{\F_p}$ be the hypersurface defined by $f\in S_{d,p}$. If $V$ is smooth of pure dimension $s\ge1$ over a field $k$ and $G$ is a regular function, or a local equation of a section of an invertible sheaf, define
\begin{equation}\label{eq:Bertini-singular-locus}
 \BSing_V(G)=V_V(G,dG)_{\mathrm{red}}.
\end{equation}
This definition is independent of the local trivialization. Indeed, if $G'=uG$ for a unit $u$, then $dG'=u\,dG+G\,du$, and hence $(G',dG')=(G,dG)$. The support of \eqref{eq:Bertini-singular-locus} is the locus where $V(G)$ is not smooth of relative dimension $s-1$ over $k$. It therefore contains every irreducible component of $V$ on which $G$ vanishes identically. This is the singular locus used in Poonen's arithmetic Bertini formulation, rather than only the intrinsic nonregular locus of $V(G)$. We write $\BSing(H_f\cap V)=\BSing_V(f|_V)$. If $s=0$, set $\BSing_V(G)=V(G)_{\mathrm{red}}$. Throughout, $\dim Z$ denotes the Krull dimension of $(Z_{\bar k})_{\mathrm{red}}$, and $\dim\varnothing=-\infty$.

Poonen asked in \cite[Conjecture~5.2]{Poonen2004} whether, for every integral quasiprojective $\cX\subseteq\Pj^n_{\Z}$ that is smooth over $\Z$ of relative dimension $r$, there is a constant $c>0$ such that
\begin{equation}\label{eq:Poonen-conjecture}
 \frac{\#\set{f\in S_{d,p}:\dim\BSing(H_f\cap\cX_p)\ge1}}{\#S_{d,p}}
 <\frac{c}{p^2}
\end{equation}
whenever $d$ and $p$ are sufficiently large. He also noted the heuristic expectation of a bound $c/p^k$ for every fixed integer $k\ge2$. The first-order estimate below is stronger: for every fixed real $A>0$, the probability in \eqref{eq:Poonen-conjecture} is at most $p^{-A}$ once the lower bound on $d$ is allowed to depend on $A$. Characteristic-$p$ derivative decoupling is particularly effective when $d/p$ is large, but its auxiliary degree remains bounded in the range $d\asymp p$. For projective arithmetic varieties, Wang proved a corresponding $p^{-2}$ estimate under smoothness and irreducibility hypotheses on the fibers \cite[Lemma~6.3]{Wang2022}. The quasiprojective case requires a uniform treatment of curves meeting the boundary.

We first state a higher-jet form of the estimate. Put $L_{d,p}=\mathcal O_{\Pj^n_{\F_p}}(d)|_{\cX_p}$. If $p_1,p_2:\cX_p\times_{\F_p}\cX_p\to\cX_p$ are the projections and $\mathcal I_\Delta$ is the ideal of the diagonal, set
\begin{equation}\label{eq:principal-parts-definition}
 \mathcal P^m_{\cX_p/\F_p}(L_{d,p})
 =p_{1*}\!\left(
 p_2^*L_{d,p}\otimes
 \mathcal O_{\cX_p\times_{\F_p}\cX_p}/\mathcal I_\Delta^{m+1}
 \right).
\end{equation}
This sheaf is locally free because $\cX_p/\F_p$ is smooth. Pullback along $p_2$ gives the canonical section
$j^m(f|_{\cX_p})\in H^0(\cX_p,\mathcal P^m_{\cX_p/\F_p}(L_{d,p}))$; let $J_m(f)=Z(j^m(f|_{\cX_p}))$. When $p>m$, \cref{lem:etale-taylor} identifies this zero scheme, after a local trivialization and a choice of \'{e}tale coordinates, with the common zero scheme of the divided coordinate derivatives of order at most $m$. The defining ideal is independent of the trivialization. Put
$N_m=\binom{r+m}{m}$ and $\lambda_m(d)=\floor{\frac{m(d+1)}{m+1}}$.

\begin{theorem}[Uniform higher-jet estimate]\label{thm:jet-global}
For every fixed $m\ge1$ there are constants $C,d_0,p_0$, depending only on the fixed embedding $\cX\subseteq\Pj^n_{\Z}$ and on $m$, such that for every prime $p\ge p_0$ and every $d\ge d_0$,
\begin{equation}\label{eq:jet-global}
 \Prob_{f\in S_{d,p}}\bigl(\dim J_m(f)\ge1\bigr)
 \le C(d+1)^{N_m}p^{-\lambda_m(d)}.
\end{equation}
\end{theorem}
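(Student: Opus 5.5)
We will use Poonen's sieve in its "high-degree" form, adapted to count positive-dimensional zero loci rather than points, and organized around Frobenius orbits of curves. The event $\dim J_m(f)\ge1$ means that $J_m(f)$ contains an irreducible curve. That curve is defined over some finite extension and is stable under the $\operatorname{Gal}(\bar\F_p/\F_p)$-action. Taking the union over its Frobenius orbit gives a reduced curve $Y\subseteq\cX_p$ defined over $\F_p$ along which $j^m(f|_{\cX_p})$ vanishes identically.

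\emph{Reduction to short curves.} First we reduce to curves $Y$ of bounded complexity. Choose a general linear projection $\cX_p\to\A^{r}$, which is \'etale on a dense open subset, uniformly in $p$ after spreading out over $\Z$. Using Jacobian-pivot charts covering $\cX$ by finitely many opens of uniformly controlled complexity, we may assume $Y$ meets a chart $U$ with \'etale coordinates $x_1,\dots,x_r$. By \cref{lem:etale-taylor}, on $U$ the vanishing of $j^m f$ along $Y$ means that all divided derivatives $D^\alpha f$ with $|\alpha|\le m$ vanish on $Y$.

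\emph{The main decoupling.} Next we decompose $f$ $Q$-adically, with $Q=x_1^p$ or a Frobenius-twisted coordinate. Write
\[
f=\sum_{\beta} g_\beta^{\,p}\,x^\beta ,\qquad 0\le\beta_i<p,
\]
where the $g_\beta$ range over forms of degree about $(d-|\beta|)/p$, independently and uniformly. In characteristic $p$, the divided derivatives of order $\le m$ act only on the $x^\beta$ factors, because $p$th powers are killed by derivatives of order $<p$. The conditions $D^\alpha f|_Y=0$ therefore form a triangular linear system in the $g_\beta^p|_Y$, and each Taylor block is determined by the blocks of lower order. The filtration by $|\alpha|$ gives normal Taylor blocks of sizes summing to $N_m$. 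We condition on all but one "free" coefficient family per block, exactly as in Poonen's decoupling. The probability that all $N_m$ restrictions vanish along a fixed $Y$ is then at most $p^{-(\text{number of independent constraints})}$. Counting the degree budget shows that a form of degree $d$ can be forced to vanish to order $m+1$ along a curve only if roughly $\lambda_m(d)=\lfloor m(d+1)/(m+1)\rfloor$ independent linear conditions hold. This is the source of the exponent. The $(d+1)^{N_m}$ factor comes from summing over the possible degrees and intersection data of $Y$ with the coordinate hypersurfaces: by B\'ezout, $Y$ is contained in $V(D^\alpha f)$, so its degree is $O(d)$ in each of the $N_m$ directions.

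\emph{Main obstacle.} The hardest step is making the count uniform in $p$ when $d\asymp p$. In that range the auxiliary degrees $\deg g_\beta\approx d/p$ are bounded, so the usual argument loses independence. To handle this, we slice by Frobenius orbits: we restrict to a hyperplane section through a closed point of $Y$ of degree $e$, and take the union over the $e$ conjugates. On this union we show that the restriction map from forms of degree $d$ to the $m$-jets along the orbit is surjective, with kernel codimension at least $\min(\lambda_m(d), e\cdot N_m)$, by a Lagrange interpolation bound. The bound \eqref{eq:jet-global} then follows by summing over $e$ and using the Lang--Weil count of points of degree $e$ on $\cX_p$, namely $O(p^{er})$. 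Curves meeting the boundary of the quasiprojective $\cX$ are handled by applying the same argument to the projective closure and discarding boundary components. This requires the charts to have uniformly bounded complexity so that $C,d_0,p_0$ depend only on the embedding and on $m$.
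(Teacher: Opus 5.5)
There is a genuine gap: neither of your two mechanisms produces the exponent $\lambda_m(d)$ uniformly in $p$.

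\emph{The $p$-power decoupling.} The decomposition $f=\sum_\beta g_\beta^p x^\beta$ is empty in the regime the theorem is mostly about. The bound must hold for fixed $d\ge d_0$ and every $p\ge p_0$, so typically $d<p$. Then every $g_\beta$ is a constant and nothing is decoupled. Even when $d\gg p$, the auxiliary degree $\approx d/p$ gives an exponent of order $d/p$, far below $\lambda_m(d)\approx md/(m+1)$. So the claim that ``counting the degree budget'' yields $\lambda_m(d)$ independent conditions has no support. The factor $(d+1)^{N_m}$ is not explained either. Summing over degrees or intersection data of $Y$ does not produce it, and a union bound over the candidate curves $Y$ themselves is not affordable.

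\emph{The orbit-slicing repair.} This fails as written. A slice chosen ``through a closed point of $Y$'' depends on $f$. The coefficients are then no longer uniform and independent of the slice, so no conditional probability can be computed. The Lang--Weil union bound over all closed points of degree $e$ gives at best $p^{er}p^{-\min(eN_m,\lambda_m(d))}$. Summed over $e$, this is dominated by $e=1$, i.e.\ by $p^{r-N_m}$, which is $p^{-1}$ for $m=1$. That is the true order of the probability that $J_m(f)$ has a rational point at all, and it says nothing about positive-dimensional loci. For $er>\lambda_m(d)$ the terms even exceed $1$. Restricting to a single large $e$ would require two facts you do not address: that $Y$ has points of that degree (its geometric components may be defined only over large extensions), and that jets there impose $eN_m$ independent conditions. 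Finally, passing to the projective closure does not handle curves meeting the boundary, since a fixed divisor can miss an affine curve.

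\emph{The missing idea.} You need to choose the slice independently of $f$ and let the slice equation itself supply the decoupling. The paper proceeds as follows.
\begin{itemize}
\item By B\'ezout it extracts a curve $C$ of degree at most $\Gamma_m(d)=C_m(d+1)^{N_m}$ in the jet locus, and takes an ambient coordinate $t_j$ nonconstant on $C$.
\item It draws a uniform monic irreducible $Q$ of degree $e$. Since $t_j|_C$ omits at most $b\deg\overline C$ values (the boundary-value lemma) and distinct root orbits are disjoint, $Q(t_j)=0$ misses $C$ with probability at most $2b\Gamma_m(d)q^{-e}$.
\item The filtered expansion $F=\sum_aQ(t_j)^aF_a$ has independent uniform blocks $F_a\in\cR_{I\cup\{j\}}(d-ae)$. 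Modulo $Q(t_j)$ the normal Hasse derivatives are triangular: $\partial_j^{[a]}F\equiv Q'(t_j)^aF_a+\Phi_a(F_0,\dots,F_{a-1})$.
\item At each closed point $y$ of the slice, $t_j(y)$ has minimal polynomial $Q$. Hence block $a$ imposes at least $\max\{0,\min\{e,d-ae+1\}\}$ conditions, for a total of $\Lambda_m(d,e)=\min\{me,d-e+1\}$. This equals $\lambda_m(d)$ at $e=\lceil(d+1)/(m+1)\rceil$.
\item The union bound runs only over the at most $\Gamma_m(d)$ points of the zero-dimensional locus $W^{(m)}_{I\cup\{j\}}(F_0)$. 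Positive-dimensional cases feed a recursion on the number of slices, which terminates at $|I|=r$.
\end{itemize}
That bounded union bound is the actual source of the factor $(d+1)^{N_m}$.
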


\begin{corollary}[Poonen's conjecture]\label{cor:poonen}
There are constants $C,d_0,p_0$, depending only on the fixed embedding $\cX\subseteq\Pj^n_{\Z}$, such that
\begin{equation}\label{eq:first-order-global}
 \Prob_{f\in S_{d,p}}
 \bigl(\dim\BSing(H_f\cap\cX_p)\ge1\bigr)
 \le C(d+1)^{r+1}p^{-\ceil{d/2}}
\end{equation}
for $p\ge p_0$ and $d\ge d_0$. Consequently, \eqref{eq:Poonen-conjecture} holds. More generally, for every $A>0$, the left side of \eqref{eq:first-order-global} is at most $p^{-A}$ after increasing the degree threshold.
\end{corollary}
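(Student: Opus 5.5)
We deduce the corollary from \cref{thm:jet-global} with $m=1$; the deduction needs no new geometry.

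\textbf{Step 1: identify the two loci.} First we check that for $p>1$ the zero scheme $J_1(f)$ has the same support as $\BSing(H_f\cap\cX_p)$. Work locally on an open $U\subseteq\cX_p$ on which $L_{d,p}$ is trivialized and \'etale coordinates $x_1,\dots,x_r$ exist. By \cref{lem:etale-taylor}, $J_1(f)\cap U$ is cut out by $G$ and the first divided derivatives $\partial G/\partial x_i$, where $G$ is the local equation of $f|_{\cX_p}$. Since $dx_1,\dots,dx_r$ form a basis of $\Omega^1_U$, the ideal $(G,\partial_1G,\dots,\partial_rG)$ equals the ideal $(G,dG)$ appearing in \eqref{eq:Bertini-singular-locus}. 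Hence $J_1(f)_{\mathrm{red}}=\BSing_{\cX_p}(f|_{\cX_p})$. Two remarks keep this honest. First, $\cX_p$ is smooth of pure dimension $r$: $\cX$ is smooth over $\Z$ and integral, so every nonempty fiber is equidimensional of dimension $r$. Second, the case $r=0$ is harmless: both loci are then finite, so neither has dimension $\ge1$. Dimension is insensitive to nilpotents, so $\dim J_1(f)=\dim\BSing(H_f\cap\cX_p)$.

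\textbf{Step 2: the main estimate.} Apply \eqref{eq:jet-global} with $m=1$. Then $N_1=r+1$, and
\[
\lambda_1(d)=\floor{(d+1)/2}=\ceil{d/2}.
\]
This gives \eqref{eq:first-order-global}, with $p_0$ enlarged to exceed $1$ if necessary.

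\textbf{Step 3: consequences.} Fix $A>0$. We need $C(d+1)^{r+1}p^{-\ceil{d/2}}\le p^{-A}$ for all $p\ge p_0$ and all $d\ge d_1(A)$. It suffices to have
\[
C(d+1)^{r+1}\le p^{\ceil{d/2}-A}.
\]
Because $p\ge p_0\ge2$, it is enough that $C(d+1)^{r+1}\le 2^{d/2-A}$. This holds for all large $d$, since exponential growth beats polynomial growth. The estimate is uniform in $p$ because the right side only increases with $p$ once $d/2>A$. Taking $A=3$ gives $p^{-3}<c/p^2$ with $c=1$, which is \eqref{eq:Poonen-conjecture}.

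\textbf{Main obstacle.} The only delicate point is Step 1. We must make sure the principal-parts zero scheme, defined globally via \eqref{eq:principal-parts-definition}, matches Poonen's locus, which includes components on which $G$ vanishes identically. That matching is exactly what \cref{lem:etale-taylor} supplies, together with the trivialization-independence noted after \eqref{eq:Bertini-singular-locus}. Everything else is bookkeeping of constants.
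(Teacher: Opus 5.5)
Your proposal is correct and follows the paper's proof: identify the first jet-zero scheme with the Bertini singular locus, specialize \cref{thm:jet-global} to $m=1$ using $N_1=r+1$ and $\lambda_1(d)=\ceil{d/2}$, and absorb the polynomial factor into $2^{\ceil{d/2}-A}\le p^{\ceil{d/2}-A}$ using $p\ge2$. The only cosmetic difference is that you take $A=3$ with $c=1$, whereas the paper takes $A=2$ and then any fixed $c>1$.
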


We next consider complete intersections. Fix $1\le c\le r$ and positive integers $d_1,\ldots,d_c$. For independent uniform forms $\mathbf f=(f_1,\ldots,f_c)$, let $Y_{\mathbf f}=\cX_p\cap H_{f_1}\cap\cdots\cap H_{f_c}$, and put $L_i=\mathcal O_{\Pj^n_{\F_p}}(d_i)|_{\cX_p}$ and $s_i=f_i|_{\cX_p}$. On $Y_{\mathbf f}$ there is a canonical homomorphism
\begin{equation}\label{eq:global-ci-differential-map}
 \Phi_{\mathbf f}:
 \bigoplus_{i=1}^cL_i^{-1}|_{Y_{\mathbf f}}
 \longrightarrow\Omega^1_{\cX_p/\F_p}|_{Y_{\mathbf f}}.
\end{equation}
If $e_i$ is a local frame of $L_i$ and $s_i=G_i e_i$, the $i$th summand sends $e_i^\vee$ to $dG_i|_{Y_{\mathbf f}}$. Under a change $e_i'=u_i e_i$, one has $G_i'=u_i^{-1}G_i$ and $dG_i'=u_i^{-1}dG_i$ on $Y_{\mathbf f}$, which is the transition rule for $L_i^{-1}$. Define $\Sigma_c(\mathbf f)=D_{c-1}(\Phi_{\mathbf f})$. Thus $\Sigma_c(\mathbf f)$ is locally defined by the $c\times c$ minors of a matrix for $\Phi_{\mathbf f}$, and its geometric points are characterized by
$G_1(x)=\cdots=G_c(x)=0$ and $\operatorname{rank}(dG_1(x),\ldots,dG_c(x))<c$.
By \cref{lem:ci-jacobian}, this locus is empty precisely when $Y_{\mathbf f}$ is empty or smooth of pure dimension $r-c$.

\begin{theorem}[Uniform complete-intersection estimate]\label{thm:ci-global}
There are constants $C,C',d_0,p_0$, depending only on the fixed embedding $\cX\subseteq\Pj^n_{\Z}$ and on $c$, such that, whenever $p\ge p_0$ and $d_{\min}=\min_i d_i\ge d_0$,
\begin{align}
 \Prob\bigl(\dim\Sigma_c(\mathbf f)\ge1\bigr)
 &\le C\sum_{i=1}^c(d_i+1)^{r+1}p^{-\ceil{d_i/2}},
 \label{eq:ci-sum}\\
 &\le C'(d_{\min}+1)^{r+1}p^{-\ceil{d_{\min}/2}}.
 \label{eq:ci-min}
\end{align}
\end{theorem}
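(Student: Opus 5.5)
We reduce \cref{thm:ci-global} to the first-order case of \cref{thm:jet-global} (equivalently \cref{cor:poonen}), applied on auxiliary smooth schemes of uniformly bounded complexity. The second inequality \eqref{eq:ci-min} then follows from \eqref{eq:ci-sum}. For $d_0$ large, $x\mapsto(x+1)^{r+1}p^{-\ceil{x/2}}$ is decreasing in $x\ge d_0$ for $p\ge p_0$, so each summand is at most the $d_{\min}$ term, and we may take $C'=cC$. The real content is \eqref{eq:ci-sum}.

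For \eqref{eq:ci-sum} we induct on $c$. The case $c=1$ is exactly \cref{cor:poonen}, since $\Sigma_1(f)=\BSing(H_f\cap\cX_p)$ as sets. For $c\ge2$, write $Y'=\cX_p\cap H_{f_1}\cap\cdots\cap H_{f_{c-1}}$. Suppose $\dim\Sigma_c(\mathbf f)\ge1$, and let $Z$ be a positive-dimensional irreducible component. Either (a) $Z$ meets $\Sigma_{c-1}(f_1,\dots,f_{c-1})$ in a positive-dimensional set, or (b) a dense open subset $Z^\circ$ of $Z$ lies in the smooth locus $Y'^{\mathrm{sm}}$ of pure dimension $r-c+1$. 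In case (b), $Z^\circ\subseteq\BSing_{Y'^{\mathrm{sm}}}(f_c|_{Y'})$. The reason is that on $Y'^{\mathrm{sm}}$ the conormal sequence is exact with $dG_1,\dots,dG_{c-1}$ independent. Hence the rank drop of $(dG_1,\dots,dG_c)$ at $x$ is equivalent to $dG_c|_{T_xY'}=0$.

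Case (a) is handled by induction and contributes $C\sum_{i<c}(d_i+1)^{r+1}p^{-\ceil{d_i/2}}$. For case (b) we condition on $(f_1,\dots,f_{c-1})$ and use the independence of $f_c$. We then need \cref{cor:poonen}, or rather its proof, applied to the varying smooth quasiprojective scheme $V=Y'^{\mathrm{sm}}$ over $\F_p$ in place of $\cX_p$. The constant must be uniform in $V$ and depend only on $r$, $n$ and a complexity bound.

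This is exactly where the abstract's ``Jacobian-pivot charts of uniformly controlled complexity'' enter. We cover $Y'^{\mathrm{sm}}$ by the open sets where a fixed $(c-1)\times(c-1)$ minor of the Jacobian of $(G_1,\dots,G_{c-1})$ in étale coordinates of $\cX$ is invertible. On such a chart, the complementary $r-c+1$ coordinates are étale coordinates on $Y'$. The first-order Bertini condition for $f_c$ becomes the vanishing of $G_c$ and of the $r-c+1$ ``pivot-corrected'' partial derivatives, namely the Schur-complement combination of $\partial G_c$ with $\partial G_{<c}$. The number of charts is bounded in terms of $r,c$ and the cover of $\cX$. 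The degrees of the defining equations, however, grow with $d_1,\dots,d_{c-1}$.

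The main obstacle is therefore to show that the Frobenius-orbit/filtered $Q$-adic argument behind \cref{thm:jet-global} for $m=1$ depends on the ambient scheme only through data that stays bounded: the number of charts, the relative dimension, and the fact that $f_c$ is a uniform form of degree $d_c$ on $\Pj^n$. It must not depend on the degree of $V$. My first attempt would be to rerun the slicing argument with the curve $Z$ parametrized inside $\cX_p$, using the original étale coordinates of $\cX$. The chart structure of $V$ would enter only through the linear conditions defining the pivot-corrected derivatives. Those conditions are linear in $f_c$ with coefficients determined by $f_{<c}$, so the coset-counting and triangular Taylor block structure survives.

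The resulting bound is $C(d_c+1)^{r+1}p^{-\ceil{d_c/2}}$. The exponent is still $r+1$ and not $r-c+2$, because we bound the pivot-corrected Taylor blocks inside the $r$-dimensional ambient $\cX$. Summing case (a) and case (b) gives \eqref{eq:ci-sum}, with a constant depending on $c$ through the number of pivot choices and the induction.
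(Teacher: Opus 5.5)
Your reduction has the right shape. The dichotomy (a)/(b), the identification of the rank drop with $dG_c|_{T_xY'}=0$, and the pivot-corrected (Schur-complement) derivations are exactly the paper's $E_{K,j}$. However, the step you call the main obstacle is a genuine gap, and the target you set for it is the wrong one.

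You ask for a case-(b) bound that does not depend on the degree of $V=Y'^{\mathrm{sm}}$, i.e.\ on $d_1,\dots,d_{c-1}$. You suggest that the chart structure enters ``only through the linear conditions.'' That is true for the entropy half: on the slice, $E_{K,j}\bigl(Q(t_j)H\bigr)$ restricts to a unit times $Q'(t_j)H$, so each closed point still costs $q^{-\min\{e,D-e+1\}}$. It is false for the geometric half.
\begin{itemize}
\item The bound $\Gamma$ on the degree of the extracted curve, and on the number of closed points at zero-dimensional stages, is a B\'ezout bound. It is taken for a locus cut out by $G_1,\dots,G_{c-1}$ and by pivot-corrected derivatives whose coefficients are $(c-1)\times(c-1)$ minors of $(D_kG_a)$.
\item The curve lives in $U_K$, whose boundary contains $V(\Delta_K)$, and $\deg\Delta_K$ can be as large as $(c-1)(\max_{a<c}d_a+\tau)$. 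So the boundary-value lemma bounds the omitted coordinate values only by $\deg(B\widetilde\Delta_K)\deg\overline C$.
\end{itemize}
Hence the miss factor has size $\mathrm{poly}(d_{<c})\,q^{-e}$. If $d_{c-1}$ is huge compared with $p^{d_c}$, the available miss bound exceeds $1$ for every admissible $e\le d_c$, and the recursion cannot start. Otherwise it yields only $C(d_{\max}+1)^{A}p^{-\ceil{d_c/2}}$. This is what \cref{rem:dmax-error} warns against, and it is not the termwise bound \eqref{eq:ci-sum}.

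There is also a smaller problem with slicing ``in the original \'etale coordinates of $\cX$.'' The slicing must use only the complementary coordinates $t_j$, $j\in J$. A pivot coordinate $t_k$ is not killed by the $E_{K,\ell}$, so slicing by $Q(t_k)$ breaks the reduction of the tangential equations to $F_0$.

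The missing idea is to make the dependence on $V$ harmless rather than to remove it.
\begin{itemize}
\item Since $\Sigma_c(\mathbf f)$ is invariant under permuting the $f_i$, relabel so that $d_1\le\cdots\le d_c$, and add the equations in this order.
\item At stage $i$, every previous $G_a$ has degree at most $D:=d_i$. With $h=i-1$, \cref{prop:pivot} then gives $\gdeg(\overline{U_K})\le C(D+1)^{h}$, $\deg B_K\le C(D+1)$, and tangent-derivation coefficients of degree at most $C(D+1)$.
\item Rerunning the first-order recursion on $U_K$, slicing only in the coordinates indexed by $J$ (\cref{cor:moving-local}), gives $\Gamma_K\le C(D+1)^{h}(D+1)^{s+1}=C(D+1)^{r+1}$ with $s=r-h$. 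This, not an ambient Taylor count, is where the exponent $r+1$ comes from.
\item The miss factor is then at most $C(D+1)^{r+2}2^{-\ceil{D/2}}\le\frac12$ for $D\ge D_0$.
\end{itemize}
Your (a)/(b) split then becomes $\Prob(E_i\mid f_{<i})\le\one_{E_{i-1}}+C(d_i+1)^{r+1}p^{-\ceil{d_i/2}}$, which iterates to \eqref{eq:ci-sum}.

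Finally, a minor point. The function $(x+1)^{r+1}p^{-\ceil{x/2}}$ is not decreasing on the integers: it increases from $2k-1$ to $2k$. So \eqref{eq:ci-min} needs the pairwise envelope with an extra constant such as $(3/2)^{r+1}$, rather than $C'=cC$.
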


\begin{remark}\label{rem:dmax-error}
A bound of the form $C(d_{\max}+1)^A p^{-\ceil{d_{\min}/2}}$ does not yield a fixed-power estimate that is uniform when $d_{\max}$ is unrestricted: with $p$ and $d_{\min}$ fixed, its right-hand side is unbounded as $d_{\max}\to\infty$. The termwise estimate \eqref{eq:ci-sum}, followed by the envelope argument giving \eqref{eq:ci-min}, avoids this problem.
\end{remark}

\subsection*{The orbit-slicing argument}

The local argument is carried out on an affine chart $U$ with distinct original dehomogenized coordinates $t_1=z_1,\ldots,t_r=z_r$ for which $(t_1,\ldots,t_r):U\to\A^r_{\F_q}$ is \'{e}tale. The fact that the $t_i$ are degree-one ambient variables is used in the filtered $Q$-adic decomposition. If $C$ is a curve contained in a positive-dimensional jet-zero locus, then some unsliced coordinate $t_j|_C$ is nonconstant. For a monic irreducible $Q\in\F_q[T]$ of degree $e$, one slices by $Q(t_j)=0$. A boundary-degree argument gives
\begin{equation}\label{eq:intro-hit}
 \Prob_Q\bigl(C\cap V(Q(t_j))=\varnothing\bigr)
 \ll\deg(\overline C)q^{-e}.
\end{equation}

The same equation has an algebraic role. The exact filtered expansion is
\begin{equation}\label{eq:intro-qadic}
 F=F_0+Q(t_j)F_1+\cdots+Q(t_j)^mF_m+\cdots.
\end{equation}
The blocks $F_a$ are independent and uniform in their respective filtered normal-form spaces. Modulo $Q(t_j)$, the first $m$ normal Hasse derivatives form a triangular system with diagonal term $Q'(t_j)^aF_a$ in order $a$. Every closed point of the slice has residue field containing $\F_{q^e}$, and the $a$th block contributes at least $\rho_a=\max\{0,\min\{e,d-ae+1\}\}$ independent conditions. Hence
\begin{equation}\label{eq:intro-Lambda}
 \Lambda_m(d,e)=\sum_{a=1}^m\rho_a
 =\min\set{me,d-e+1}.
\end{equation}
The recursion requires only that the miss factor $2b\Gamma_m(d)q^{-e}$ be bounded away from $1$. Since $\Gamma_m(d)$ is polynomial in $d$, this holds uniformly for $q\ge2$ when $e$ grows linearly with $d$. The quantity in \eqref{eq:intro-Lambda} is maximized at $e=\ceil{(d+1)/(m+1)}$, where it equals $\floor{m(d+1)/(m+1)}$. This optimality concerns the present choice of one common orbit degree and the first $m$ pure normal blocks.

For complete intersections, the equations are ordered by degree and introduced successively. Outside the preceding rank-degeneracy locus, a nonzero Jacobian minor defines a smooth pivot chart. Explicit tangent derivations on this chart reduce a new positive-dimensional rank defect to an ordinary hypersurface singular locus. The degree of the chart closure, the degree of its boundary, and the degrees of the tangent-derivation coefficients are polynomially controlled; the combined exponent in the local probability estimate is $r+1$.

Finite-field Bertini sieves have been developed for complete intersections \cite{BucurKedlaya2012}, semiample systems \cite{ErmanWood2015}, prescribed subschemes \cite{Poonen2008,Wutz2016,Gunther2017}, irreducibility \cite{CharlesPoonen2016}, motivic Taylor conditions \cite{BiluHowe2021}, and differential Taylor conditions \cite{Bertucci2026}. Random finite-field slicing appears in work on exceptional loci for Bertini irreducibility \cite{PoonenSlavov2022,KmenttShute2022}. Other recent results concern Hilbert--Samuel multiplicity \cite{AjitBertucci2026}, Galois-orbit interpolation and large singular loci \cite{AsgarliLoveYip2025}, and anti-Bertini embeddings \cite{ZhangYang2026}. A more detailed comparison is given in \cref{sec:literature}.

\subsection*{Organization}

\Cref{sec:charts} constructs the affine--\'{e}tale charts and proves the algebraic and geometric estimates used by orbit slicing. \Cref{sec:recursion} establishes the local recursion and globalizes it over the arithmetic family, proving \cref{thm:jet-global,cor:poonen}. \Cref{sec:ci} constructs the Jacobian-pivot charts, proves \cref{thm:ci-global}, and records consequences and comparisons in \cref{sec:consequences}.

\section{Local coordinates, filtered orbit slices, and geometric control}\label{sec:charts}

We first isolate geometric data that remain uniform in the residue characteristic. All degree bounds below are deliberately coarse; only polynomial dependence on $d$ is used.

\subsection{Affine coordinates and tangent derivations}

Let $R$ be a commutative ring. Fix a standard affine chart of $\Pj^n_R$ and write its dehomogenized coordinates as
$z_1,\ldots,z_n$, with $A_R=R[z_1,\ldots,z_n]$.
Throughout the affine arguments, $\deg$ denotes total degree and we use the convention
$\deg 0=-\infty$.
Whenever the zero polynomial occurs as a hypersurface equation in a B\'{e}zout estimate, it is regarded as the zero element of the explicitly prescribed positive-degree homogeneous piece. This convention leaves its zero scheme unchanged and makes every degree comparison below well defined.

If $U\subseteq\A^n_R$ is a locally closed affine subscheme and $D$ is a derivation of $\mathcal O(U)$, a \emph{polynomial representative of degree at most $a$} means an identity
\begin{equation}\label{eq:polynomial-derivation}
 D(F|_U)=\left(\sum_{\nu=1}^n A_\nu(z)\frac{\partial F}{\partial z_\nu}\right)\Big|_U
 \quad(F\in A_R),
 \qquad \deg A_\nu\le a.
\end{equation}
In particular, if $F\in A_R$ has total degree at most $d$, then the polynomial representative of $DF$ has degree
\begin{equation}\label{eq:derivative-degree}
 \deg(DF)\le d+a-1\le d+a.
\end{equation}
The insistence on the original dehomogenized coordinates is essential: uniform homogeneous forms dehomogenize to the full space $(A_{\F_p})_{\le d}$, and no auxiliary affine coordinates are introduced.

\begin{proposition}[Uniform controlled charts]\label{prop:controlled-charts}
Let $\cX\subseteq\Pj^n_{\Z}$ be integral, quasiprojective, and smooth over $\Z$ of relative dimension $r$. There exist a nonzero integer $N_0$, a finite affine open cover
$\cX_{\Z[1/N_0]}=\bigcup_{\alpha=1}^{M}\cU_\alpha$,
and integers
$\delta\ge1,\qquad b\ge1,\qquad \tau\ge1$
with the following properties. For each $\alpha$ there are:

\begin{enumerate}[label=\textup{(\roman*)},leftmargin=2.2em]
\item a standard projective chart $x_{a_\alpha}\ne0$ containing $\cU_\alpha$, with its original dehomogenized coordinates $z_1,\ldots,z_n$;
\item distinct coordinates, relabelled as $t_1=z_1,\ldots,t_r=z_r$, such that
$t_\alpha=(t_1,\ldots,t_r):\cU_\alpha\longrightarrow\A^r_{\Z[1/N_0]}$
is \'{e}tale;
\item derivations $D_{\alpha,1},\ldots,D_{\alpha,r}$ and units
$\Delta_{\alpha,1},\ldots,\Delta_{\alpha,r}\in\Gamma(\cU_\alpha,\mathcal O)^\times$ satisfying
$D_{\alpha,i}(t_j)=\Delta_{\alpha,i}\delta_{ij}$;
\item polynomial representatives in the same variables $z_1,\ldots,z_n$ for every $D_{\alpha,i}$, with coefficient degrees at most $\tau$;
\item an integral scheme-theoretic projective closure
$\overline{\cU}_\alpha\subseteq\Pj^n_{\Z[1/N_0]}$ whose nonempty geometric fibers are equidimensional of dimension $r$ and satisfy
\begin{equation}\label{eq:closure-component-degree}
 \sum_{Z\in\operatorname{Irr}((\overline{\cU}_{\alpha,\bar s})_{\mathrm{red}})}
 \deg Z\le\delta,
\end{equation}
for every geometric point $\bar s$ of $\Spec\Z[1/N_0]$, together with a homogeneous polynomial
$B_\alpha\in\Z[1/N_0][x_0,\ldots,x_n]$ of degree at most $b$, whose coefficients generate the unit ideal in $\Z[1/N_0]$, such that
\begin{equation}\label{eq:boundary-principal}
 \cU_\alpha=\overline{\cU}_\alpha\cap D_+(B_\alpha).
\end{equation}
\end{enumerate}
The \'{e}tale maps, unit and derivation identities, principal-open descriptions, and geometric-fiber degree bounds are preserved by base change; in particular, each base change to $\F_p$ for $p\nmid N_0$ has the stated local data. No integrality of an individual fiber is asserted. We do not assert that $\overline{\cU}_{\alpha,\bar s}$ is the scheme-theoretic closure of $\cU_{\alpha,\bar s}$ inside the fiber; the argument uses only the base-changed principal-open identity and the uniform bound \eqref{eq:closure-component-degree}.
\end{proposition}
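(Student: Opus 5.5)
We will construct the cover over the generic fiber first, spread the data out over $\Z[1/N_0]$ by inverting finitely many primes, and then use Noetherian compactness of $\cX$ to obtain finitely many charts. Since $\cX$ is smooth over $\Z$ of relative dimension $r$, every point $x\in\cX$ lies in some standard chart $x_a\ne0$. On that chart the differentials $dz_1,\ldots,dz_n$ generate $\Omega^1_{\cX/\Z}$, which is locally free of rank $r$. Hence, near $x$, some $r$ of them form a basis. After relabelling, these are $t_1=z_1,\ldots,t_r=z_r$, and $(t_1,\ldots,t_r)$ is \'etale on an open neighbourhood, giving (i) and (ii).

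To make the neighbourhood affine with a principal boundary, we will take the scheme-theoretic closure $\overline{\cX}$ of $\cX$ in $\Pj^n_{\Z}$. The closed set $\overline{\cX}\setminus\cX$, together with the non-\'etale locus and the complement of the chart, is contained in $V_+(B)$ for a homogeneous $B$ not vanishing at $x$. We then set $\cU=\overline{\cX}\cap D_+(B)$; it is affine because $D_+(B)$ is. Finitely many such $\cU$ cover $\cX$, with $\overline{\cU}_\alpha=\overline{\cX}$ integral. Each $B_\alpha$ may have a nonunit content. We divide by the content, and then invert every prime at which the coefficients fail to generate the unit ideal; this step may shrink the cover only over finitely many primes, which are absorbed into $N_0$.

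For (iii) and (iv), we will use the \'etale property to identify $\Omega^1_{\cU/\Z}$ with the free module on $dt_1,\ldots,dt_r$. The dual derivations $\partial/\partial t_i$ are regular on $\cU$, but they are not yet polynomial in $z$. To fix this, we write $\cU$ locally as cut out by $g_{r+1},\ldots,g_n$ in the remaining variables, possibly after further shrinking by a principal open. The Jacobian $J=(\partial g_k/\partial z_\ell)_{k,\ell>r}$ then has unit determinant $\det J$. Cramer's rule gives
\[
 D_i=\det J\,\frac{\partial}{\partial z_i}-\sum_{\ell>r}\bigl(\operatorname{adj}(J)\,\partial_{z_i}g\bigr)_\ell\,\frac{\partial}{\partial z_\ell},
\]
which is a polynomial representative satisfying $D_i(t_j)=\det J\cdot\delta_{ij}$. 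So $\Delta_{\alpha,i}=\det J$ is a unit, and $\tau$ bounds the degrees of the cofactors. The catch is that $\cU$ need not be globally a complete intersection in these coordinates. We will handle this by using local generators of the ideal and refining the cover by principal opens $D_+(\det J\cdot B)$. The generators can be taken homogeneous, or rather dehomogenized polynomials of bounded degree, and we multiply $B$ by a homogenization of $\det J$.

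The main obstacle is (v): the uniform degree bound \eqref{eq:closure-component-degree} and equidimensionality on every geometric fiber. We will argue by flatness. After inverting finitely many primes, generic flatness makes $\overline{\cX}\to\Spec\Z[1/N_0]$ flat with constant Hilbert polynomial. Every nonempty fiber therefore has dimension $r$ and degree equal to the generic degree $\delta_0$. Equidimensionality of every fiber, meaning no embedded lower-dimensional components, is harder. We will first try to show that the open set $\cX$ is dense in each fiber of $\overline{\cX}$ after inverting primes. The set where fibers of $\overline{\cX}$ fail to be equidimensional of dimension $r$ is constructible and misses the generic fiber, so it lies over finitely many primes. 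This uses EGA IV's constructibility of equidimensionality for a proper flat family. The sum of degrees of the reduced irreducible components is then at most $\delta_0$, since the degrees of the top-dimensional components, weighted by multiplicity, sum to the Hilbert degree. Finally, the remark that all the data survive base change holds because each identity involved (\'etaleness, the unit equations, the principal-open equality, and flatness) is stable under base change.
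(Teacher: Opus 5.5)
Your proposal is correct. Its overall architecture matches the paper's: wedge-basis \'etale charts, principal affine opens of the projective closure, polynomial derivation representatives, and flatness plus the Hilbert polynomial for the degree bound. Two sub-steps, however, rest on different lemmas.

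For (iii)--(iv), you build the $D_i$ by Cramer's rule from a local presentation $g_{r+1},\ldots,g_n$ whose Jacobian block in $z_{r+1},\ldots,z_n$ is invertible, so that $\Delta_{\alpha,i}=\det J$. The paper instead takes the dual derivations $\partial_i$ on $\cU_\alpha=D(g_\alpha)$ in the affine closure. It notes that the finitely many values $\partial_i(z_\nu)$ lie in the localization at $g_\alpha$, and sets $D_{\alpha,i}=g_\alpha^{N_\alpha}\partial_i$ and $\Delta_{\alpha,i}=g_\alpha^{N_\alpha}$. The paper's route needs only that $\mathcal O(\cU_\alpha)$ is a localization of the affine coordinate ring. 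Yours needs the extra fact that a smooth embedded scheme is locally cut out by $n-r$ equations, plus one more refinement of the cover. In exchange, it produces explicit ambient derivations tangent to $\cX$; this is the same adjugate construction the paper uses later in \eqref{eq:pivot-derivation}.

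For equidimensionality in (v), you appeal to EGA~IV constructibility of the equidimensional-fiber locus and enlarge $N_0$. The paper instead proves that once $\overline{\cU}_\alpha$ is flat, \emph{every} closed fiber is equidimensional of dimension $r$. The generic point of a fiber component is minimal over $(\pi_s)$, hence of height one, and the dimension formula over the universally catenary base then gives transcendence degree $r$. The paper passes to geometric fibers because relative dimension is stable under base change. The paper's argument is self-contained and inverts no further primes. Yours is shorter but black-boxed, and you should say explicitly why equidimensionality of $X_s$ passes to $X_{\bar s}$.

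Your use of the single closure $\overline{\cX}$, with $B$ a homogeneous equation of the bad closed set not vanishing at $x$, is equivalent to the paper's choice $B_\alpha=x_{a_\alpha}g_\alpha^{\mathrm{hom}}$. Since $\cX$ is integral, the paper's $\overline{\cU}_\alpha$ is also $\overline{\cX}$.

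Two small corrections. First, dividing $B$ by its content $c$ is harmless only after the primes dividing $c$ have been put into $N_0$. Over such a prime, $D_+(B/c)$ can contain boundary points. For example, take $\cX=\A^n_{\Z[1/p]}\subseteq\Pj^n_{\Z}$ and $B=p\,x_0$: then $D_+(B/c)\cap\overline{\cX}=\A^n_{\Z}\not\subseteq\cX$. So you must invert first and then normalize. Second, ``equidimensional'' refers to the irreducible components of the reduced fiber, not to embedded components. The density of $\cX_p$ in $\overline{\cX}_p$ that you propose to prove first is not needed, and the statement explicitly disclaims it.
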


\begin{proof}
Intersect $\cX$ with the standard affine charts of $\Pj^n_{\Z}$. For a locally closed immersion into $\A^n_{\Z}$, the ambient differentials $dz_1,\ldots,dz_n$ generate $\Omega^1_{\cX/\Z}$. Since $\cX/\Z$ is smooth of relative dimension $r$, the opens on which one of the wedges
$dz_{i_1}\wedge\cdots\wedge dz_{i_r}$
is a basis cover $\cX$. On each standard affine chart, $\cX$ is open in its scheme-theoretic affine closure. Distinguished opens in that closure form a basis, so quasicompactness gives finitely many principal affine opens $\cU_\alpha$ contained in the corresponding wedge-basis loci. After relabelling, $dt_1,\ldots,dt_r$ is a basis on $\cU_\alpha$, with $t_i=z_i$.

For
$t=(t_1,\ldots,t_r):\cU_\alpha\longrightarrow\A^r_{\Z}$,
the map $t^*\Omega^1_{\A^r_{\Z}/\Z}\to\Omega^1_{\cU_\alpha/\Z}$ is an isomorphism. Both source and target of $t$ are smooth over $\Z$. The transitivity triangle for cotangent complexes therefore gives $L_{\cU_\alpha/\A^r_{\Z}}=0$; since $t$ is locally of finite presentation, the infinitesimal criterion for \'{e}taleness shows that $t$ is \'{e}tale. Equivalently, the dual basis gives derivations
$\partial_1,\ldots,\partial_r$ satisfying $\partial_i(t_j)=\delta_{ij}$.

Write $\cU_\alpha=D(g_\alpha)$ in its scheme-theoretic affine closure inside the fixed standard affine space. Each regular function $\partial_i(z_\nu)$ lies in the localization by $g_\alpha$. Choose one exponent $N_\alpha$ clearing all of their denominators and set
$D_{\alpha,i}=g_\alpha^{N_\alpha}\partial_i$ and $\Delta_{\alpha,i}=g_\alpha^{N_\alpha}$.
After lifting the finitely many numerators to $\Z[z_1,\ldots,z_n]$, the values $D_{\alpha,i}(z_\nu)$ have polynomial representatives in the original ambient coordinates. A derivation is determined by its values on these generators, so \eqref{eq:polynomial-derivation} holds, and
$D_{\alpha,i}(t_j)=\Delta_{\alpha,i}\delta_{ij}$
with $\Delta_{\alpha,i}$ a unit on $D(g_\alpha)$. Taking the maximum of the finitely many coefficient degrees gives $\tau$.

Let $X^{\mathrm{aff}}_\alpha$ denote the scheme-theoretic affine closure in the fixed standard affine space, so that $\cU_\alpha=D_{X^{\mathrm{aff}}_\alpha}(g_\alpha)$. Since $X^{\mathrm{aff}}_\alpha$ is integral and $\cU_\alpha$ is nonempty, $g_\alpha$ is nonzero and $\cU_\alpha$ is dense in $X^{\mathrm{aff}}_\alpha$. Let $\overline{\cU}_\alpha$ be the scheme-theoretic closure of $\cU_\alpha$ in the same $\Pj^n_{\Z}$; it is integral, and its intersection with the standard affine chart is $X^{\mathrm{aff}}_\alpha$. Choose a polynomial lift $\widetilde g_\alpha\in\Z[z_1,\ldots,z_n]$ of $g_\alpha$, let $g_\alpha^{\mathrm{hom}}$ be its homogenization with respect to $x_{a_\alpha}$, and put
$B_\alpha=x_{a_\alpha}g_\alpha^{\mathrm{hom}}$.
Then
$D_+(B_\alpha)=D_+(x_{a_\alpha})\cap D_+(g_\alpha^{\mathrm{hom}})$,
and consequently
$\cU_\alpha=\overline{\cU}_\alpha\cap D_+(B_\alpha)$
as schemes. The finite list gives a common degree bound $b$. After enlarging the integer inverted below by the contents of the finitely many $B_\alpha$, their coefficients generate the unit ideal over the resulting base, so no $B_\alpha$ becomes the zero polynomial on a remaining fiber.

After inverting one integer $N_0$, generic flatness makes every
$\overline{\cU}_\alpha$ flat and projective over
$S=\Spec\Z[1/N_0]$, and all the preceding data and the finite cover are defined over $S$.
The generic fiber is integral of dimension $r$, so the integral total space has dimension
$r+1$. Let $s\in S$ be a closed point and let $\xi$ be the generic point of an irreducible
component of the fiber over $s$. On an affine neighborhood of $\xi$, a uniformizer $\pi_s$
of the regular one-dimensional base is a nonzerodivisor by flatness, and the prime
corresponding to $\xi$ is minimal over $(\pi_s)$. The principal ideal theorem gives height at
most one, while integrality and $\pi_s\ne0$ give height at least one; hence the height is exactly
one. Since schemes of finite type over the excellent, hence universally catenary, scheme $S$
satisfy the dimension formula, applied to $\xi$ over $s$ it gives
\[
 \operatorname{ht}_{\overline{\cU}_\alpha}(\xi)
 +\operatorname{trdeg}_{\kappa(s)}\kappa(\xi)
 =\operatorname{ht}_S(s)
 +\operatorname{trdeg}_{K(S)}
   K(\overline{\cU}_\alpha)
 =1+r.
\]
Since $\operatorname{ht}_{\overline{\cU}_\alpha}(\xi)=1$, one has
$\operatorname{trdeg}_{\kappa(s)}\kappa(\xi)=r$. Thus every irreducible
component of every ordinary closed fiber has dimension $r$. Together with the integral
generic fiber of dimension $r$, this says that
$\overline{\cU}_\alpha\to S$ has relative dimension $r$: every nonempty ordinary fiber is
equidimensional of dimension $r$. Relative dimension is preserved by arbitrary base change
\cite[Tag 02NI]{StacksProject}. Hence every nonempty geometric fiber is equidimensional of dimension $r$.

Projective flatness makes the Hilbert polynomial with respect to $\mathcal O(1)$ locally
constant on $S$, hence constant because $S$ is connected. Since every geometric fiber is
equidimensional of dimension $r$, the leading coefficient of this Hilbert polynomial is the
degree of its fundamental $r$-cycle. Let $D_\alpha$ denote this common degree. For every geometric point $\bar s$, if $\eta_Z$
denotes the generic point of an irreducible component
$Z\in\operatorname{Irr}((\overline{\cU}_{\alpha,\bar s})_{\mathrm{red}})$, then
\[
 D_\alpha
 =\sum_{Z\in\operatorname{Irr}((\overline{\cU}_{\alpha,\bar s})_{\mathrm{red}})}
 \length\!\left(\mathcal O_{\overline{\cU}_{\alpha,\bar s},\eta_Z}\right)
 \deg Z.
\]
Every displayed length is a positive integer. Therefore the unweighted sum of the degrees
of the reduced irreducible components is at most $D_\alpha$. Taking
$\delta=\max_\alpha D_\alpha$ proves \eqref{eq:closure-component-degree}.
Units, \'{e}taleness, the principal-open identity, and the polynomial-representative identities
are all stable under base change. See \cite[Tags 02GU, 039P, 01UA, 02NI]{StacksProject} and
\cite[IV,~\S17]{EGAIV}.
\end{proof}

\subsection{The differential singular scheme}

Fix a finite field $k=\F_q$ and one chart $U=\cU_{\alpha,k}$. Suppress $\alpha$ from the notation. Thus
$t=(t_1,\ldots,t_r):U\to\A^r_k$
is \'{e}tale, and there are derivations $D_i$ satisfying $D_i(t_j)=\Delta_i\delta_{ij}$, where $\Delta_i\in\Gamma(U,\mathcal O_U)^\times$.
For an affine polynomial $F$, define
$\Sigma_U(F)=V_U(F,D_1F,\ldots,D_rF)$.

\begin{lemma}[Jacobian description]\label{lem:jacobian-description}
Assume $r\ge1$. The support of $\Sigma_U(F)$ is exactly the locus at which $V_U(F)$ is not smooth of relative dimension $r-1$ over $k$. In particular, for a homogeneous form $f$ and its dehomogenization $F$ on the chart,
$\BSing(H_f\cap U)=\bigl(\Sigma_U(F)\bigr)_{\mathrm{red}}$.
\end{lemma}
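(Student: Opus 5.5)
The plan is to reduce everything to a statement about the fibre of $\Omega^1_{U/k}$ at a closed point. Since $t$ is \'{e}tale, $dt_1,\ldots,dt_r$ is a basis of $\Omega^1_{U/k}$. The derivations $D_i$ satisfy $D_i(t_j)=\Delta_i\delta_{ij}$ with $\Delta_i$ units, so $\Delta_i^{-1}D_i$ is the dual basis $\partial_i$ of $\operatorname{Der}_k(\mathcal O_U)$. Consequently, for any regular function $G$ on $U$,
\[
 dG=\sum_{i=1}^r \Delta_i^{-1}D_i(G)\,dt_i .
\]
The ideal $(G,D_1G,\ldots,D_rG)$ therefore equals the ideal generated by $G$ and the coefficients of $dG$ in the basis $dt_i$. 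This is the ideal $(G,dG)$ of \eqref{eq:Bertini-singular-locus}. Applied to $G=F|_U$, we get $\Sigma_U(F)=V_U(G,dG)$ as schemes. After reduction this is $\BSing_U(G)$ by definition.

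It remains to identify the support. Fix a closed point $x\in U$ (it suffices to check over $\bar k$, since smoothness and the ideal are compatible with field extension). If $G(x)\ne0$, then $x\notin V(G)$ and $x\notin\Sigma_U(F)$. Suppose $G(x)=0$. Then $\mathcal O_{U,x}$ is regular of dimension $r$ (say, after passing to $\bar k$), and $\mathfrak m_x/\mathfrak m_x^2\cong\Omega^1_{U/k}\otimes\kappa(x)$. By the Jacobian criterion for $V(G)\subseteq U$ with $U$ smooth, $V(G)$ is smooth of relative dimension $r-1$ at $x$ exactly when the image of $dG$ in $\Omega^1_{U/k}\otimes\kappa(x)$ is nonzero. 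Equivalently, $G\notin\mathfrak m_x^2$, so that $\mathcal O_{U,x}/(G)$ is regular of dimension $r-1$. Geometric regularity over $\bar k$ gives smoothness. Conversely, if $dG(x)=0$, then either the local ring $\mathcal O_{V(G),x}$ has embedding dimension $r>\dim\mathcal O_{V(G),x}$, or $G$ vanishes in a neighbourhood and $\dim=r$. In both cases $V(G)$ is not smooth of relative dimension $r-1$ at $x$. Thus the closed points of $\Sigma_U(F)$ are exactly the non-smooth points. Since both loci are closed and $U$ is of finite type over a field, agreement on closed points gives equality of supports.

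Finally, suppose $f$ is a form of degree $d$ and $F$ is its dehomogenization in the chart $x_{a}\ne0$. Then $F|_U$ is the local equation of $f|_U$ with respect to the trivialization $x_a^d$ of $\mathcal O(d)$. Independence of \eqref{eq:Bertini-singular-locus} from the trivialization, noted in the introduction, then gives $\BSing(H_f\cap U)=\BSing_U(F|_U)=(\Sigma_U(F))_{\mathrm{red}}$. The only delicate step is the converse direction in the support comparison, where $G$ vanishes identically on a component. There I would argue by dimension: the fibre of $V(G)$ at $x$ would have dimension $r$ rather than $r-1$, so it is not smooth of relative dimension $r-1$. This matches the convention that such components lie in $\BSing$.
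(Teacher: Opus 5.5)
Your proposal is correct and follows essentially the same route as the paper: the dual-basis identification of $dG(x)=0$ with the vanishing of all $D_iG(x)$, the Jacobian criterion at geometric points, separate treatment of components on which $G$ vanishes identically, and the reduced equality read off from the definition of $\BSing$. Your version is somewhat more explicit. You identify the ideals $(G,D_1G,\ldots,D_rG)$ and $(G,dG)$ scheme-theoretically, and you spell out the regular-local-ring dichotomy ($G=0$ or $G$ a nonzerodivisor in $\mathcal O_{U,x}$). The structure of the argument is the same.
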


\begin{proof}
It is enough to test the assertion after extending the ground field to an algebraic closure and at geometric points $x\in V_U(F)$. Since $U/k$ is smooth of relative dimension $r$, its cotangent space at $x$ has basis
$dt_1(x),\ldots,dt_r(x)$, and the operators $D_i/\Delta_i$ are dual to this basis. Hence
$dF(x)=0 \quad\Longleftrightarrow\quad D_1F(x)=\cdots=D_rF(x)=0$.
The smooth-hypersurface criterion in a smooth $k$-scheme says that $V_U(F)$ is smooth of relative dimension $r-1$ at $x$ exactly when $dF(x)\ne0$. If $F$ vanishes identically on a component of $U$, then $dF$ vanishes on that component and the section has the wrong relative dimension there, so the same criterion includes the whole component. The equality of reduced closed subschemes follows from the definition of $\BSing_U(F)$.
\end{proof}

\subsection{Higher coordinate derivatives}

Put
$\partial_i=\Delta_i^{-1}D_i$.
These derivations are the coordinate derivations dual to $dt_1,\ldots,dt_r$; they satisfy
$\partial_i(t_j)=\delta_{ij}$.
Moreover, $[\partial_i,\partial_j]$ annihilates every $t_\ell$, hence every differential in the basis $dt_1,\ldots,dt_r$; therefore $[\partial_i,\partial_j]=0$, so the coordinate derivations commute.
For a multi-index $\gamma=(\gamma_1,\ldots,\gamma_r)$ such that every $\gamma_i!$ is invertible in $k$, write
\[
 \partial^{[\gamma]}
 =\frac{\partial_1^{\gamma_1}\cdots\partial_r^{\gamma_r}}
 {\gamma_1!\cdots\gamma_r!}.
\]
In every later use one has $\abs\gamma\le m$ and $\operatorname{char}k>m$, so this divided derivative is defined.

\begin{lemma}[Uniform polynomial representatives for higher derivatives]\label{lem:higher-representatives}
Fix $m\ge1$ and one controlled integral chart $\cU_\alpha$ from \cref{prop:controlled-charts}. After base change to
$R_m=\Z[1/(N_0m!)]$,
there is a constant $\tau_m$, depending only on this integral chart and on $m$, with the following property. For every field $k$ over $R_m$, every multi-index $\gamma$ with $\abs\gamma\le m$, and every polynomial
$F\in k[z_1,\ldots,z_n]$ of degree at most $d$, there are
\begin{itemize}[leftmargin=2em]
\item a unit $u_\gamma\in\Gamma(U,\mathcal O_U)^\times$, obtained by base change from a unit on $\cU_{\alpha,R_m}$ and independent of $F$; and
\item a polynomial $P_{\gamma,F}\in k[z_1,\ldots,z_n]$
\end{itemize}
such that
\begin{equation}\label{eq:higher-representative}
 P_{\gamma,F}|_U=u_\gamma\partial^{[\gamma]}F,
 \qquad
 \deg P_{\gamma,F}\le d+\tau_m.
\end{equation}
The same value of $\tau_m$ works after every base change from $R_m$.
\end{lemma}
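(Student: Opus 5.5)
The plan is to work entirely over the integral chart $\cU_{\alpha,R_m}$ with the data of \cref{prop:controlled-charts}, and to obtain representatives for the undivided operators $D^\gamma=D_1^{\gamma_1}\cdots D_r^{\gamma_r}$ (in a fixed order) by induction on $\abs\gamma$. Afterwards we convert to $\partial^{[\gamma]}$ by clearing the units $\Delta_i$ and the factorials, which are invertible over $R_m$.

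\emph{Step 1: polynomial iterates of $D_i$.} Using the representatives \eqref{eq:polynomial-derivation}, define the polynomial operator $\widetilde D_i=\sum_\nu A_{i\nu}(z)\,\partial/\partial z_\nu$ on $R_m[z_1,\ldots,z_n]$, where $\deg A_{i\nu}\le\tau$. Since restriction to $U$ is a ring map and $D_i$ is determined by its values on the generators $z_\nu$, we have $(\widetilde D_iF)|_U=D_i(F|_U)$ for every polynomial $F$. Applying this to $\widetilde D_jF$ in place of $F$ gives $(\widetilde D_i\widetilde D_jF)|_U=D_iD_j(F|_U)$, and by induction $(\widetilde D^\gamma F)|_U=D^\gamma(F|_U)$. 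Each application raises the degree by at most $\tau$, by \eqref{eq:derivative-degree}. Hence $\deg\widetilde D^\gamma F\le d+m\tau$ for $\abs\gamma\le m$.

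\emph{Step 2: relating $D^\gamma$ to $\partial^\gamma$.} Since $\partial_i=\Delta_i^{-1}D_i$, expanding $\partial^\gamma=\prod_i(\Delta_i^{-1}D_i)^{\gamma_i}$ by the Leibniz rule yields an identity
\[
 \partial^\gamma=\sum_{\abs\beta\le\abs\gamma} c_{\gamma\beta}\,D^\beta .
\]
The coefficients $c_{\gamma\beta}$ are regular functions on $\cU_{\alpha,R_m}$, namely polynomials in the $\Delta_i^{-1}$ and their iterated $D$-derivatives, and they are independent of $F$. The cleaner route is to proceed by induction using the commutation relation $D_j\Delta_i^{-1}=\Delta_i^{-1}D_j+D_j(\Delta_i^{-1})$. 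Choose a common unit $w$ clearing all denominators: since $\cU_\alpha=D(g_\alpha)$ in its affine closure, we may take $w$ to be a power of $g_\alpha$ times a product of powers of the $\Delta_i$. Then every $w\,c_{\gamma\beta}$ is the restriction of a polynomial $E_{\gamma\beta}\in R_m[z]$. Put $u_\gamma=w/(\gamma_1!\cdots\gamma_r!)$, which is a unit on $\cU_{\alpha,R_m}$ because $\gamma_i!\mid m!$, and define
\[
 P_{\gamma,F}=\frac{1}{\gamma!}\sum_\beta E_{\gamma\beta}\,\widetilde D^\beta F .
\]
By Step 1, $P_{\gamma,F}|_U=u_\gamma\,\partial^{[\gamma]}F$, and
\[
 \deg P_{\gamma,F}\le d+m\tau+\max_{\gamma,\beta}\deg E_{\gamma\beta}.
\]
We set $\tau_m$ equal to $m\tau+\max_{\gamma,\beta}\deg E_{\gamma\beta}$. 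This is a finite maximum over $\abs\gamma\le m$, fixed once over $R_m$.

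\emph{Step 3: base change.} The $A_{i\nu}$, $E_{\gamma\beta}$ and $w$ all live over $R_m$. Their identities are equalities in $\Gamma(\cU_{\alpha,R_m},\mathcal O)$, which are preserved by $-\otimes_{R_m}k$. Degrees can only drop under base change. So the same $\tau_m$ and the base-changed unit $u_\gamma$ work for every field $k$ over $R_m$, uniformly in $F$ and $d$.

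The main obstacle is Step 2. We must verify that the coefficient functions $c_{\gamma\beta}$ have a common denominator expressible as a single unit that does not depend on $F$, and that the numerators admit polynomial lifts. I would handle this by noting that $\Gamma(\cU_{\alpha,R_m},\mathcal O)$ is a quotient of $R_m[z][1/\widetilde g_\alpha]$. Every regular function is then of the form $h/\widetilde g_\alpha^{N}$. The finitely many $c_{\gamma\beta}$ are cleared by a single power $\widetilde g_\alpha^{N}$, and we take $w=g_\alpha^{N}$. This is simpler than tracking the $\Delta_i$ explicitly.
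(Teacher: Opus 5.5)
Your argument is correct, but it organizes the computation differently from the paper.

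The paper works in a localized ambient ring. Using $t_i=z_i$, it notes that the diagonal coefficient $a_{ii}$ of $D_i$ is a polynomial representative of $\Delta_i$. It puts $\delta=\prod_i a_{ii}$ and writes $\partial_i=\sum_\nu (a_{i\nu}/a_{ii})\,\partial/\partial z_\nu$. By induction, using the product rule and the quotient rule for $\delta^{-M}$, it proves $\partial^\gamma=\sum_{\abs\eta\le\abs\gamma}\delta^{-M_\gamma}c_{\gamma,\eta}\,\partial_z^\eta$ with bounded $\deg c_{\gamma,\eta}$. The unit is then the explicit $u_\gamma=\delta^{M_\gamma}$.

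You instead separate the two sources of difficulty. Every derivative actually applied to $F$ is a polynomial operator $\widetilde D_i$, so $\deg\widetilde D^\beta F\le d+\abs\beta\tau$ is immediate and no rational function of $z$ ever acts on $F$. All non-polynomial content sits in finitely many $F$-independent regular functions $c_{\gamma\beta}$ on $\cU_{\alpha,R_m}$. These are cleared at once by a power of $g_\alpha$, because $\Gamma(\cU_\alpha,\mathcal O)=\Gamma(X^{\mathrm{aff}}_\alpha,\mathcal O)[1/g_\alpha]$. The ordered expansion $\partial^\gamma=\sum_\beta c_{\gamma\beta}D^\beta$ is valid even though the $D_i$ need not commute, since moving coefficients to the left keeps the surviving $D_i$ in their original order.

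Your route is softer: the exponent $N$, and hence $u_\gamma$ and $\tau_m$, come from a finiteness argument rather than an explicit recursion. It also does not need the identification of $\Delta_i$ with $a_{ii}$. The paper's route gives explicit denominators and expresses everything through ordinary partial derivatives.

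One bookkeeping slip: $P_{\gamma,F}$ already contains the factor $1/\gamma!$. Restriction therefore gives $P_{\gamma,F}|_U=(w/\gamma!)\,\partial^\gamma F=w\,\partial^{[\gamma]}F$, so the unit must be $u_\gamma=w$, not $w/\gamma!$. Both are units over $R_m$, so nothing else changes.
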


\begin{proof}
Work first over $R_m$. Choose polynomial representatives
$a_{i\nu}(z)\in R_m[z_1,\ldots,z_n] \qquad(1\le i\le r,\ 1\le\nu\le n)$
for the coefficients of $D_i$, so that on $\cU_{\alpha,R_m}$ one has
$D_i=\sum_{\nu=1}^n a_{i\nu}(z)\frac{\partial}{\partial z_\nu}$.
Because $t_i=z_i$ and $D_i(t_i)=\Delta_i$, the polynomial
$\delta_i:=a_{ii}$ represents the unit $\Delta_i$. Put
$\delta=\prod_{i=1}^r\delta_i$.
Its restriction to the chart is a unit. On restrictions of ambient polynomials,
\begin{equation}\label{eq:coordinate-operator-ambient}
 \partial_i
 =\Delta_i^{-1}D_i
 =\sum_{\nu=1}^n\frac{a_{i\nu}(z)}{\delta_i(z)}
   \frac{\partial}{\partial z_\nu}.
\end{equation}

We claim that, for every multi-index $\gamma$ with $\abs\gamma\le m$, there are an integer $M_\gamma\ge0$ and polynomials
$c_{\gamma,\eta}(z)\in R_m[z_1,\ldots,z_n]$, indexed by
$\abs\eta\le\abs\gamma$, such that on restrictions of ambient polynomials
\begin{equation}\label{eq:ambient-differential-operator-expansion}
 \partial^\gamma
 =\sum_{\abs\eta\le\abs\gamma}
   \frac{c_{\gamma,\eta}(z)}{\delta(z)^{M_\gamma}}
   \partial_z^\eta.
\end{equation}
Moreover, the degrees of all $c_{\gamma,\eta}$ are bounded by a constant depending only on the chart and on $\gamma$. For $\gamma=0$ this is immediate. Suppose the assertion holds for $\gamma$ and apply $\partial_i$. Using \eqref{eq:coordinate-operator-ambient}, the ordinary product rule, and
\[
 \frac{\partial}{\partial z_\nu}\bigl(\delta^{-M_\gamma}\bigr)
 =-M_\gamma\delta^{-M_\gamma-1}
   \frac{\partial\delta}{\partial z_\nu},
\]
we obtain an expression of the same form for $\partial_i\partial^\gamma$ after multiplying the denominator by a fixed additional power of $\delta$. Every new numerator is a finite sum of products of the previously constructed numerators, the fixed polynomials $a_{i\nu}$, $\delta$, and their first ordinary derivatives. Its degree therefore increases by a bounded amount depending only on the controlled chart. This proves the claim by induction on $\abs\gamma$.

Since $m!$ is invertible in $R_m$, divide \eqref{eq:ambient-differential-operator-expansion} by
$\gamma!=\gamma_1!\cdots\gamma_r!$. Set
$u_\gamma=\delta^{M_\gamma}|_{\cU_{\alpha,R_m}}$
and, for a polynomial $F$, define
\[
 P_{\gamma,F}
 =\frac1{\gamma!}
  \sum_{\abs\eta\le\abs\gamma}
  c_{\gamma,\eta}(z)\,\partial_z^\eta F.
\]
Then $P_{\gamma,F}|_U=u_\gamma\partial^{[\gamma]}F$ after every base change. If $C_\gamma$ bounds the degrees of the coefficients $c_{\gamma,\eta}$, then
\[
 \deg\bigl(c_{\gamma,\eta}\partial_z^\eta F\bigr)
 \le C_\gamma+d-\abs\eta
 \le d+C_\gamma.
\]
Taking $\tau_m=\max_{\abs\gamma\le m}C_\gamma$ proves both the degree estimate and its uniformity under base change.
\end{proof}

\begin{lemma}[\'{E}tale Taylor coordinates and principal parts]\label{lem:etale-taylor}
Fix $m\ge0$ and assume that $\operatorname{char}k=0$ or $\operatorname{char}k>m$. Let
$\mathfrak a=(\xi_1,\ldots,\xi_r) \subseteq\Gamma(U,\mathcal O_U)[\xi_1,\ldots,\xi_r]$.
Then
\begin{equation}\label{eq:etale-Taylor-homomorphism}
 \Theta^{(m)}(G)
 =\sum_{\abs\gamma\le m}\partial^{[\gamma]}G\,\xi^\gamma
 \pmod{\mathfrak a^{m+1}}
\end{equation}
defines a $k$-algebra homomorphism
\[
 \Theta^{(m)}:\Gamma(U,\mathcal O_U)
 \longrightarrow
 \Gamma(U,\mathcal O_U)[\xi_1,\ldots,\xi_r]/\mathfrak a^{m+1},
\]
uniquely characterized by
$\Theta^{(m)}(t_i)=t_i+\xi_i$
and by the condition that its reduction modulo $\mathfrak a$ is the identity on $\Gamma(U,\mathcal O_U)$.
Under the canonical identification of the $m$th infinitesimal neighborhood of the diagonal with
\begin{equation}\label{eq:principal-parts-etale-coordinates}
 p_{1*}\!\left(
 \mathcal O_{U\times_kU}/\mathcal I_\Delta^{m+1}
 \right)
 \cong
 \mathcal O_U[\xi_1,\ldots,\xi_r]/(\xi_1,\ldots,\xi_r)^{m+1},
\end{equation}
where $\xi_i=p_2^*t_i-p_1^*t_i$, the principal-parts jet of a function $G$ is represented by the right side of \eqref{eq:etale-Taylor-homomorphism}. Consequently, for every unit $u\in\Gamma(U,\mathcal O_U)^\times$,
\begin{equation}\label{eq:jet-unit-invariance}
 \bigl(\partial^{[\gamma]}(uG):\abs\gamma\le m\bigr)
 =
 \bigl(\partial^{[\gamma]}G:\abs\gamma\le m\bigr)
\end{equation}
as ideal sheaves on $U$.
\end{lemma}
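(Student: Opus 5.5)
The strategy is to realize $\Theta^{(m)}$ as the reduction of the canonical diagonal homomorphism $p_2^*$, using the \'{e}tale coordinates $t$. Multiplicativity, uniqueness, the identification with principal parts, and the unit invariance \eqref{eq:jet-unit-invariance} then follow from that single realization.

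First, identification \eqref{eq:principal-parts-etale-coordinates}. Since $t:U\to\A^r_k$ is \'{e}tale, $U\times_kU\to U\times_k\A^r_k$ given by $(x,y)\mapsto(x,t(y))$ is \'{e}tale. The diagonal $\Delta_U$ maps isomorphically onto the graph of $t$, which is a section of the smooth map $U\times\A^r\to U$ with ideal generated by $p_2^*t_i-p_1^*t_i$. An \'{e}tale morphism induces isomorphisms on all infinitesimal neighbourhoods of a closed subscheme mapped isomorphically onto its image; this is the infinitesimal lifting property, proved by induction on $m$ using formal \'{e}taleness. Hence
\[
\mathcal O_{U\times U}/\mathcal I_\Delta^{m+1}\cong\mathcal O_U[\xi]/(\xi)^{m+1}
\]
with $\xi_i=p_2^*t_i-p_1^*t_i$, and $\mathcal I_\Delta/\mathcal I_\Delta^2\cong\Omega^1_U$ is free on the $\xi_i$. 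Under this isomorphism, $\Psi(G):=p_2^*G \bmod \mathcal I_\Delta^{m+1}$ is a $k$-algebra homomorphism $\Gamma(U,\mathcal O_U)\to\Gamma(U,\mathcal O_U)[\xi]/\mathfrak a^{m+1}$. It sends $t_i\mapsto t_i+\xi_i$, and its reduction mod $\mathfrak a$ is the identity.

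Next, uniqueness. Suppose $\Theta_1,\Theta_2$ are two such homomorphisms that agree modulo $\mathfrak a^{j}$. Their difference modulo $\mathfrak a^{j+1}$ is a $k$-derivation $\Gamma(U)\to\mathfrak a^j/\mathfrak a^{j+1}$, viewed as a module over $\Gamma(U)$ via reduction. This derivation kills every $t_i$, so it factors through $\Omega^1_{U/k}$ and vanishes on each $dt_i$. Because $dt_1,\dots,dt_r$ form a basis of $\Omega^1_{U/k}$, the derivation is zero, and induction on $j\le m$ gives $\Theta_1=\Theta_2$.

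Then, the explicit formula. Write $\Psi(G)=\sum_{|\gamma|\le m}c_\gamma(G)\xi^\gamma$. The coefficient maps satisfy a Leibniz rule $c_\gamma(GH)=\sum_{\alpha+\beta=\gamma}c_\alpha(G)c_\beta(H)$, which makes them a Hasse--Schmidt family of differential operators. They satisfy $c_0=\mathrm{id}$ and $c_{e_i}(t_j)=\delta_{ij}$, and $c_\gamma(t_j)=0$ for $|\gamma|\ge2$. The degree-one coefficients $c_{e_i}$ are derivations with $c_{e_i}(t_j)=\delta_{ij}$, so $c_{e_i}=\partial_i$. To get $c_\gamma=\partial^{[\gamma]}$, the plan is to show that the right side of \eqref{eq:etale-Taylor-homomorphism} is itself multiplicative and then invoke uniqueness. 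Multiplicativity of $\sum\partial^{[\gamma]}G\,\xi^\gamma$ follows from the generalized Leibniz rule for commuting derivations: $\partial^{[\gamma]}(GH)=\sum_{\alpha+\beta=\gamma}\partial^{[\alpha]}G\,\partial^{[\beta]}H$. This rule holds since $\partial_i$ commute (shown above) and $\gamma!$ is invertible because $\operatorname{char}k>m$. The values $\partial^{[\gamma]}t_j$ are $t_j$, $\delta_{ij}$, or $0$, so the explicit expression satisfies the characterizing conditions and coincides with $\Psi$. The main subtlety I expect is the \'{e}tale identification of infinitesimal neighbourhoods in the first step; I would cite \cite[IV,~\S17]{EGAIV} for it.

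Finally, unit invariance. The principal-parts jet is multiplicative, so $\Theta^{(m)}(uG)=\Theta^{(m)}(u)\Theta^{(m)}(G)$. Here $\Theta^{(m)}(u)$ is a unit of $\mathcal O_U[\xi]/\mathfrak a^{m+1}$, since it is $u$ plus a nilpotent element. The ideal of $\mathcal O_U$ generated by the coefficients of an element of the free $\mathcal O_U$-module $\mathcal O_U[\xi]/\mathfrak a^{m+1}$ is unchanged by multiplication by a ring unit. Indeed, multiplication by $\Theta^{(m)}(u)$ is an $\mathcal O_U$-linear automorphism, and the coefficient ideal of $w$ is the image of the evaluation $\operatorname{Hom}(\mathcal O_U[\xi]/\mathfrak a^{m+1},\mathcal O_U)\to\mathcal O_U$ at $w$. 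This gives \eqref{eq:jet-unit-invariance}.
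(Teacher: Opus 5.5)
Your proof is correct, but it runs the argument in the opposite direction from the paper's proof.

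\textbf{The paper's route.} It first obtains $\Theta^{(m)}$ abstractly, as the unique lift given by formal \'etaleness of $B=\Gamma(U,\mathcal O_U)$ over $k[t_1,\ldots,t_r]$. It then identifies this lift with the truncated exponential via the Leibniz rule. Only after that does it prove the principal-parts identification, by writing down two explicit ring maps between $(B\otimes_kB)/I^{m+1}$ and $B[\xi]/\mathfrak a^{m+1}$ and checking, again by formal \'etaleness, that they are mutually inverse.

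\textbf{Your route.} You start from the diagonal. The base change $\mathrm{id}\times t$ is \'etale and carries $\Delta_U$ isomorphically onto the closed graph of $t$, so the infinitesimal neighbourhoods agree, and $p_2^*$ itself supplies the homomorphism.

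\textbf{Where \'etaleness enters.} Your uniqueness step uses only that the $dt_i$ generate $\Omega^1_{U/k}$: a $k$-derivation into $\mathfrak a^j/\mathfrak a^{j+1}$ that kills every $t_i$ must vanish. That is unramifiedness, weaker than the full lifting property the paper invokes. Together with the Leibniz computation, this shows that the existence-and-uniqueness part of the lemma does not need formal smoothness at all. The smoothness half of \'etaleness enters your proof only through the neighbourhood comparison, which you cite rather than prove. It does hold: lift the thickening of the graph once along the \'etale map $\mathrm{id}\times t$. Then use uniqueness of lifts to see that the reverse composite on the $m$th neighbourhood of $\Delta_U$ is the identity. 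The paper's explicit inverse maps carry out exactly this check by hand.

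\textbf{Unit invariance.} For \eqref{eq:jet-unit-invariance}, the paper uses the Hasse product rule to get an upper-triangular change of generators with diagonal entry $u$, and then repeats the argument with $u^{-1}$. You argue instead that $\Theta^{(m)}(uG)=\Theta^{(m)}(u)\Theta^{(m)}(G)$, where $\Theta^{(m)}(u)$ is a unit. You then note that the coefficient ideal of an element of the free $\mathcal O_U$-module $\mathcal O_U[\xi]/\mathfrak a^{m+1}$ is unchanged by an $\mathcal O_U$-linear automorphism. This argument is coordinate-free, gives both inclusions at once, and shows directly why the jet-zero scheme is independent of the trivialization. The paper's triangular matrix is just the matrix of multiplication by $\Theta^{(m)}(u)$.

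\textbf{What each buys.} The paper's version is more self-contained and explicit. Yours is more conceptual and isolates precisely where \'etaleness, as opposed to unramifiedness, is used.
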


\begin{proof}
Write $B=\Gamma(U,\mathcal O_U)$ and $R=k[t_1,\ldots,t_r]$. Since
$U\to\A^r_k$ is \'{e}tale, $B$ is an \'{e}tale, hence formally \'{e}tale, $R$-algebra. The map
\[
 R\longrightarrow B[\xi_1,\ldots,\xi_r]/\mathfrak a^{m+1},
 \qquad
 t_i\longmapsto t_i+\xi_i,
\]
reduces modulo $\mathfrak a$ to the given structure map $R\to B$. Formal \'{e}taleness therefore gives a unique lift
$B\longrightarrow B[\xi_1,\ldots,\xi_r]/\mathfrak a^{m+1}$
reducing to the identity modulo $\mathfrak a$.

Because the $\partial_i$ commute and all factorials up to $m$ are invertible, the generalized Leibniz rule shows directly that the truncated exponential
\[
 G\longmapsto
 \sum_{\abs\gamma\le m}\partial^{[\gamma]}G\,\xi^\gamma
 \pmod{\mathfrak a^{m+1}}
\]
is a $k$-algebra homomorphism. It sends $t_i$ to $t_i+\xi_i$, so uniqueness identifies it with the formally \'{e}tale lift. For completeness, put
$C=(B\otimes_kB)/I^{m+1}$ and $I=\ker(B\otimes_kB\xrightarrow{\mathrm{mult}}B)$.
As a $B$-algebra through the first tensor factor, $C$ receives the homomorphism
\[
 B[\xi_1,\ldots,\xi_r]/\mathfrak a^{m+1}\longrightarrow C,
 \qquad
 \xi_i\longmapsto 1\otimes t_i-t_i\otimes1.
\]
Conversely, send the first tensor factor $B$ identically to $B$ and the second tensor factor through $\Theta^{(m)}$; because $I$ maps into $\mathfrak a$, this induces a homomorphism
$C\longrightarrow B[\xi_1,\ldots,\xi_r]/\mathfrak a^{m+1}$.
The two composites are the identity on the first factor and on every $\xi_i$. For the second tensor factor, equip $C$ with the $R$-algebra structure $t_i\mapsto1\otimes t_i$. The remaining identity follows from formal \'{e}taleness: both maps $B\to C$ are $R$-algebra maps and reduce modulo $I$ to the identity. Thus the maps are inverse. Sheafifying gives \eqref{eq:principal-parts-etale-coordinates}, and under this identification $p_2^*G$ is exactly the Taylor expansion \eqref{eq:etale-Taylor-homomorphism}.

Finally, the Hasse product rule gives
\[
 \partial^{[\gamma]}(uG)
 =u\,\partial^{[\gamma]}G
 +\sum_{\substack{\eta+\nu=\gamma\\\nu\ne\gamma}}
 \partial^{[\eta]}u\,\partial^{[\nu]}G.
\]
Ordering multi-indices by total degree shows that the generators on the left of \eqref{eq:jet-unit-invariance} are obtained from those on the right by an upper-triangular matrix with diagonal entry $u$. Hence the left ideal is contained in the right ideal; applying the same argument to $u^{-1}$ gives the reverse inclusion.
\end{proof}

\subsection{Filtered polynomial quotients and iterated orbit slices}\label{sec:filtered}

The recursion repeatedly restricts a random degree-$d$ polynomial to slices of the form $Q_i(t_i)=0$. Uniformity must therefore be formulated in the filtered quotient by the slice equations rather than in the original polynomial space.

Let
$A=k[z_1,\ldots,z_n]$ and $A_{\le d}=\set{F\in A:\deg F\le d}$,
using the original dehomogenized coordinates of the standard projective chart containing $U$. Fix an integer $e\ge1$. For $I\subseteq[r]=\set{1,\ldots,r}$ and monic degree-$e$ polynomials
$\boldsymbol Q_I=(Q_i)_{i\in I}$, with $Q_i\in k[T]$,
put
$J_I=\bigl(Q_i(t_i):i\in I\bigr)\subseteq A$.
Define
\begin{equation}\label{eq:normal-form-space}
 \cR_I(d)=
 \operatorname{span}_k\set{
 z_1^{a_1}\cdots z_n^{a_n}:
 \sum_{\nu=1}^na_\nu\le d,\quad
 a_i<e\text{ for }i\in I
 }.
\end{equation}
For $d<0$, set $\cR_I(d)=0$.

\begin{lemma}[Filtered normal form and the full $Q$-adic expansion]\label{lem:filtered-normal-form}
For every $d\ge0$, successive reduction by the monic polynomials $Q_i(t_i)$ induces a vector-space isomorphism
\begin{equation}\label{eq:filtered-quotient-isomorphism}
 \cR_I(d)\xrightarrow{\sim}
 \im\left(A_{\le d}\longrightarrow A/J_I\right).
\end{equation}
Equivalently, every $F\in A_{\le d}$ has a unique congruent representative
$\operatorname{NF}_I(F)\in\cR_I(d)$.
If $j\notin I$ and $Q_j$ is monic of degree $e$, then
\begin{equation}\label{eq:filtered-direct-sum}
 \cR_I(d)
 =\cR_{I\cup\set{j}}(d)
 \oplus Q_j(t_j)\cR_I(d-e),
\end{equation}
and, after iterating this decomposition,
\begin{equation}\label{eq:q-adic-sum}
 \cR_I(d)=
 \bigoplus_{a=0}^{\floor{d/e}}
 Q_j(t_j)^a\cR_{I\cup\set{j}}(d-ae).
\end{equation}
\end{lemma}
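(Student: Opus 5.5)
We prove the lemma by a dimension count combined with an explicit reduction algorithm. The key points are that the slice equations $Q_i(t_i)$ involve disjoint variables among the original coordinates $z_1,\ldots,z_n$, each is monic of degree $e$ in its variable, and each is inhomogeneous only in lower-degree terms.

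\textbf{Surjectivity.} Take a monomial $z^a$ with $\abs a\le d$ and some $a_i\ge e$ for $i\in I$. Write
\[
 t_i^{a_i}=t_i^{a_i-e}Q_i(t_i)+t_i^{a_i-e}\bigl(t_i^e-Q_i(t_i)\bigr).
\]
The second term has $t_i$-degree strictly less than $a_i$ and total degree at most $\abs a$. Reduction therefore never raises total degree. It strictly decreases the multiset of exponents $(a_i)_{i\in I}$ in a well-founded order, since reducing in $t_i$ does not change the other $t_{i'}$-exponents. Hence every $F\in A_{\le d}$ is congruent modulo $J_I$ to an element of $\cR_I(d)$.

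\textbf{Injectivity.} It suffices to show $\cR_I(d)\cap J_I=0$. Since $Q_i(t_i)$, $i\in I$, are monic polynomials in distinct variables, $A/J_I$ is the free module
\[
 k[z_\nu:\nu\notin I]\otimes\bigotimes_{i\in I}k[t_i]/(Q_i)
\]
with basis the monomials satisfying $a_i<e$ for $i\in I$. This follows from iterated division by monic polynomials, or from the fact that the $Q_i(t_i)$ form a Gröbner basis with pairwise coprime leading monomials $t_i^e$. Reduced monomials are thus linearly independent in $A/J_I$, so the map in \eqref{eq:filtered-quotient-isomorphism} is injective. Existence and uniqueness of $\operatorname{NF}_I$ are a restatement of this isomorphism.

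\textbf{The decomposition \eqref{eq:filtered-direct-sum}.} Given $F\in\cR_I(d)$, divide by $Q_j(t_j)$ as a polynomial in $t_j$ with coefficients in the remaining variables. This gives $F=R+Q_j(t_j)G$ with $\deg_{t_j}R<e$. Total degrees satisfy $\deg R\le d$ and $\deg G\le d-e$. The reason is that division by a monic polynomial whose lower terms have lower total degree is compatible with the total-degree filtration, exactly as in the surjectivity step, because $t_j^e$ is the top-degree part of $Q_j$.

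Division acts only on the $t_j$-exponent. Hence if $F$ has $t_i$-exponents below $e$ for $i\in I$, so do $R$ and $G$. Thus $R\in\cR_{I\cup\set j}(d)$ and $G\in\cR_I(d-e)$. Uniqueness of division with remainder gives directness of the sum.

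\textbf{Iteration.} Applying \eqref{eq:filtered-direct-sum} repeatedly to $G$, which lies in $\cR_I(d-e)$, gives \eqref{eq:q-adic-sum} by induction on $\floor{d/e}$. The induction terminates because $\cR_I(d')=0$ for $d'<0$.

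\textbf{Main obstacle.} The only delicate point is the degree bookkeeping in the division: one must check that the quotient drops total degree by exactly $e$ while the remainder never exceeds $d$. I would verify this by an explicit induction on $\deg_{t_j}F$, peeling off the top $t_j$-power at each step.
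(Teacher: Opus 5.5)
Your proof is correct and follows the paper's argument essentially step for step. Surjectivity comes from degree-nonincreasing reduction by the monic $Q_i(t_i)$. Injectivity comes from uniqueness of the multivariate remainder, which you phrase as the monomial basis of $A/J_I\cong k[z_\nu:\nu\notin I]\otimes\bigotimes_{i\in I}k[t_i]/(Q_i)$. The two decompositions come from univariate division by $Q_j(t_j)$ followed by iteration on the quotient.
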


\begin{proof}
Because the generators of $J_I$ are monic univariate polynomials in distinct variables, successive Euclidean division is independent of the order and produces a unique remainder whose exponent in every variable indexed by $I$ is less than $e$. Replacing a power $t_i^h$, $h\ge e$, by the lower terms of $Q_i$ strictly decreases that exponent and never increases total degree. Hence a polynomial of total degree at most $d$ reduces into $\cR_I(d)$.

A polynomial in $\cR_I(d)$ that belongs to $J_I$ has zero multivariate remainder, so it is zero. This proves \eqref{eq:filtered-quotient-isomorphism}. Dividing an element of $\cR_I(d)$ by $Q_j(t_j)$ gives a remainder in $\cR_{I\cup\set{j}}(d)$ and a quotient in $\cR_I(d-e)$; uniqueness of univariate division gives \eqref{eq:filtered-direct-sum}. Repeated division of the quotient gives \eqref{eq:q-adic-sum}, and uniqueness gives directness.
\end{proof}

\begin{corollary}[Exact preservation of uniform randomness]\label{cor:uniform-randomness}
Let $F$ be uniform in $\cR_I(d)$. For fixed $j\notin I$ and fixed monic $Q_j$ of degree $e$, write
$F=\sum_{a=0}^{\floor{d/e}}Q_j(t_j)^aF_a$
using \eqref{eq:q-adic-sum}. Then the nonzero blocks $F_a$ are independent, and
\begin{equation}\label{eq:qadic-block-distribution}
 F_a\sim\operatorname{Unif}\bigl(\cR_{I\cup\set{j}}(d-ae)\bigr).
\end{equation}
In particular, the one-step decomposition $F=F_0+Q_j(t_j)H$ has independent factors
\[
 F_0\sim\operatorname{Unif}\bigl(\cR_{I\cup\set{j}}(d)\bigr),
 \qquad
 H\sim\operatorname{Unif}\bigl(\cR_I(d-e)\bigr).
\]
\end{corollary}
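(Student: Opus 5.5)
The plan is to deduce this directly from the linear-algebra content of \cref{lem:filtered-normal-form}: a uniform random vector in a finite-dimensional space that splits as a direct sum of subspaces has independent, uniform components. The same principle, applied to the one-step decomposition \eqref{eq:filtered-direct-sum}, gives the final assertion.

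First fix $j\notin I$ and the monic $Q_j$ of degree $e$. By \eqref{eq:q-adic-sum}, the map
\[
 \Psi:\bigoplus_{a=0}^{\floor{d/e}}\cR_{I\cup\set{j}}(d-ae)\longrightarrow\cR_I(d),
 \qquad (G_a)_a\longmapsto\sum_a Q_j(t_j)^aG_a,
\]
is a $k$-linear bijection. Multiplication by $Q_j(t_j)^a$ is injective on $A$, since $A$ is a domain and $Q_j(t_j)\ne0$. Hence each summand $Q_j(t_j)^a\cR_{I\cup\set{j}}(d-ae)$ is identified with $\cR_{I\cup\set{j}}(d-ae)$, and the block $F_a$ is well defined as the $a$th coordinate of $\Psi^{-1}(F)$.

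The key step is that a bijection between finite sets carries the uniform distribution to the uniform distribution. Since $F$ is uniform on $\cR_I(d)$, the tuple $\Psi^{-1}(F)$ is uniform on the product $\prod_a\cR_{I\cup\set{j}}(d-ae)$. The uniform measure on a finite product is the product of the uniform measures on the factors, because each point has mass $\prod_a(\#\cR_{I\cup\set{j}}(d-ae))^{-1}$. This gives independence of the $F_a$ together with \eqref{eq:qadic-block-distribution}. The phrase ``nonzero blocks'' only refers to indices with $d-ae\ge0$; for $d-ae<0$ the space is $0$ by convention, and the corresponding block is the constant $0$, which is trivially independent of everything.

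For the one-step statement, I would apply the same argument to the bijection $\cR_{I\cup\set{j}}(d)\oplus\cR_I(d-e)\to\cR_I(d)$, $(F_0,H)\mapsto F_0+Q_j(t_j)H$, furnished by \eqref{eq:filtered-direct-sum}. When $d<e$ one has $\cR_I(d-e)=0$ and $H=0$, consistent with the convention. No step here is a real obstacle. The only point needing care is to use the \emph{uniqueness} in \cref{lem:filtered-normal-form}, so that $\Psi$ is bijective and not merely surjective; otherwise the pushforward of the uniform measure would not be uniform.
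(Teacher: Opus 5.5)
Your proposal is correct and follows the paper's own one-line argument: uniform measure on a finite direct sum is the product of the uniform measures on the summands. You spell out the details the paper leaves implicit, namely the bijectivity of $\Psi$ (using uniqueness in \cref{lem:filtered-normal-form} and injectivity of multiplication by $Q_j(t_j)^a$) and the one-step case via \eqref{eq:filtered-direct-sum}.
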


\begin{proof}
Uniform measure on a finite direct sum is the product of the uniform measures on its summands.
\end{proof}

\subsubsection{Iterated orbit slices and jet-zero loci}

Assume from now on that every $Q_i$ is monic irreducible of degree $e$. Define
$Y_I=U\cap\bigcap_{i\in I}V\bigl(Q_i(t_i)\bigr)$.
Since finite fields are perfect, every $Q_i$ is separable. If
$T_I=V_{\A^r_k}\bigl(Q_i(t_i):i\in I\bigr)$,
then the projection of $T_I$ to the unsliced coordinate space
$\A^{r-\abs I}_k$ is finite \'{e}tale. Moreover,
$Y_I=U\times_{\A^r_k}T_I$.
Thus $Y_I$ is smooth, every nonempty irreducible component has dimension $r-\abs I$, and the unsliced functions $t_j$, $j\notin I$, form an \'{e}tale coordinate system.

Fix $m\ge1$ and assume $\operatorname{char}k>m$. For $F\in\cR_I(d)$ define
\begin{equation}\label{eq:WI-m}
 W_I^{(m)}(F)=Y_I\cap
 V\bigl(\partial^{[\gamma]}F:
 \gamma_i=0\ (i\in I),\quad \abs\gamma\le m\bigr).
\end{equation}

\begin{lemma}[Jet interpretation and tangential well-definedness]\label{lem:WI-interpretation}
The scheme $W_I^{(m)}(F)$ depends only on the class of $F$ modulo $J_I$, and it is the zero scheme of the $m$th principal-parts jet of $F|_{Y_I}$. In particular,
$W_\varnothing^{(m)}(F)=Z\bigl(j^m(F|_U)\bigr)$
as zero schemes in the chosen trivialization. For $m=1$,
\begin{equation}\label{eq:W1-singular}
 W_\varnothing^{(1)}(F)=\Sigma_U(F),
 \qquad
 \bigl(W_\varnothing^{(1)}(F)\bigr)_{\mathrm{red}}
 =\BSing_U(F).
\end{equation}
Finally,
\begin{equation}\label{eq:terminal-dimension}
 \dim W_{[r]}^{(m)}(F)\le0.
\end{equation}
\end{lemma}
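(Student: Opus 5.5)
The plan is to reduce everything to the étale Taylor description of \cref{lem:etale-taylor}, applied to the smooth scheme $Y_I$ with its étale coordinates $(t_j)_{j\notin I}$.

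\textbf{Step 1: identify the tangential derivations on $Y_I$.} Since $Y_I=U\times_{\A^r_k}T_I$ and $T_I\to\A^{r-\abs I}_k$ is finite étale, the composite $Y_I\to\A^{r-\abs I}_k$ given by $(t_j)_{j\notin I}$ is étale. Let $\partial'_j$ for $j\notin I$ be the dual coordinate derivations on $Y_I$. For $j\notin I$, the derivation $\partial_j$ of $\Gamma(U,\mathcal O)$ kills every $t_i$ with $i\in I$, hence kills every $Q_i(t_i)$. It therefore preserves the ideal $J_I\mathcal O_U$ and descends to a derivation of $\mathcal O(Y_I)$. This descended derivation sends $t_\ell\mapsto\delta_{j\ell}$ for $\ell\notin I$, so by uniqueness of dual derivations it equals $\partial'_j$. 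Consequently, for every $\gamma$ with $\gamma_i=0$ for $i\in I$, we get $(\partial^{[\gamma]}F)|_{Y_I}=\partial'^{[\gamma]}(F|_{Y_I})$.

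\textbf{Step 2: class independence and the jet interpretation.} If $F'-F=\sum_i Q_i(t_i)H_i$, then the tangential derivatives of order at most $m$ of each product $Q_i(t_i)H_i$ are, by the Leibniz rule and Step 1, multiples of $Q_i(t_i)$. They therefore vanish on $Y_I$, so $W_I^{(m)}$ depends only on $F\bmod J_I$. Applying \cref{lem:etale-taylor} to $Y_I$ with coordinates $(t_j)_{j\notin I}$ (legitimate since $\operatorname{char}k>m$), the principal-parts jet of $F|_{Y_I}$ is $\sum\partial'^{[\gamma]}(F|_{Y_I})\xi^\gamma$. Its zero scheme is thus cut out by exactly the generators in \eqref{eq:WI-m}. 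Taking $I=\varnothing$ gives $Z(j^m(F|_U))$, and \eqref{eq:jet-unit-invariance} shows that this is independent of the trivialization.

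\textbf{Step 3: the case $m=1$.} Here the generators are $F$ and $\partial_iF=\Delta_i^{-1}D_iF$ with $\Delta_i$ units. This gives the same ideal as $(F,D_1F,\ldots,D_rF)$, so $W_\varnothing^{(1)}(F)=\Sigma_U(F)$, and \cref{lem:jacobian-description} yields the reduced statement \eqref{eq:W1-singular}.

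\textbf{Step 4: the terminal dimension bound.} For $I=[r]$, the scheme $Y_{[r]}$ is étale over the finite scheme $T_{[r]}$, so it is quasi-finite of dimension at most $0$. Its closed subscheme $W_{[r]}^{(m)}(F)$ therefore has dimension at most $0$ (or is empty). This gives \eqref{eq:terminal-dimension}.

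The only delicate point is Step 1, namely that $\partial_j$ descends to $Y_I$ and coincides there with the intrinsic coordinate derivation. Everything else is formal once this is established.
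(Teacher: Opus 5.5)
Your proposal is correct and follows essentially the same route as the paper. The paper also gets class independence from the tangential Hasse product rule $\partial^{[\gamma]}(Q_i(t_i)G)=Q_i(t_i)\partial^{[\gamma]}G$ for $\gamma_i=0$, identifies the restrictions of $\partial_j$, $j\notin I$, with the coordinate derivations of the \'{e}tale chart $(t_j)_{j\notin I}$ on $Y_I$ in order to apply \cref{lem:etale-taylor}, uses that the $\Delta_i$ are units for $m=1$, and deduces \eqref{eq:terminal-dimension} from the zero-dimensionality of $T_{[r]}$; your Step 1 only spells out the descent of $\partial_j$ and the uniqueness of dual derivations more explicitly than the paper does.
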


\begin{proof}
If $i\in I$ and $\gamma_i=0$, the Hasse product rule gives
$\partial^{[\gamma]}\bigl(Q_i(t_i)G\bigr) =Q_i(t_i)\partial^{[\gamma]}G$,
because every positive Hasse derivative of $Q_i(t_i)$ is supported in the $i$th direction. Hence all equations in \eqref{eq:WI-m} are unchanged on $Y_I$ after modifying $F$ by an element of $J_I$.

The unsliced functions $t_j$, $j\notin I$, form an \'{e}tale coordinate system on $Y_I$, and their coordinate derivations are the restrictions of the corresponding $\partial_j$ because those derivations preserve the slice ideal. Applying \cref{lem:etale-taylor} to $Y_I$ identifies its principal-parts algebra of order $m$ with the truncated polynomial algebra in the unsliced coordinate differences, and identifies the coefficients of the jet of $F|_{Y_I}$ with the functions
\[
 \partial^{[\gamma]}F\restr{Y_I},
 \qquad
 \gamma_i=0\ (i\in I),\quad \abs\gamma\le m.
\]
Thus the defining ideal in \eqref{eq:WI-m} is exactly the ideal of the jet-zero scheme, proving the scheme-theoretic assertion. For $m=1$, the ideals
$(F,\partial_1F,\ldots,\partial_rF)$ and $(F,D_1F,\ldots,D_rF)$
coincide because every $\Delta_j$ is a unit. Hence $W_\varnothing^{(1)}(F)=\Sigma_U(F)$, and the reduced equality follows from \cref{lem:jacobian-description}. When $I=[r]$, the base $T_I$ and hence $Y_I$ are zero-dimensional, proving \eqref{eq:terminal-dimension}.
\end{proof}

\subsection{Geometric control, orbit slices, and normal entropy}\label{sec:degree}

All geometric statements in this section are taken after base change to an algebraic closure $\bar k$. For a projective closed subscheme $T\subseteq\Pj^n_{\bar k}$, define
$\gdeg(T)=\sum_{C\in\operatorname{Irr}(T_{\mathrm{red}})}\deg C$.
If $Z$ is a quasiprojective closed subscheme of $U_{\bar k}$, define
$\gdeg(Z)=\gdeg(\overline Z)$,
where $\overline Z$ is its projective closure in $\overline U_{\bar k}$. This mixed-dimensional geometric degree bounds the degree of each irreducible component and, when $Z$ is zero-dimensional, bounds the number of geometric points.

\begin{lemma}[Closure through a principal boundary]\label{lem:principal-open-closure}
Let $\overline U\subseteq\Pj^n_{\bar k}$ be projective, let
$U=\overline U\cap D_+(B)$, and let $T\subseteq\overline U$ be closed. Then the projective closure of $(T\cap U)_{\mathrm{red}}$ is the union of those irreducible components of $T_{\mathrm{red}}$ that are not contained in $V(B)$. In particular,
$\gdeg(T\cap U)\le\gdeg(T)$.
\end{lemma}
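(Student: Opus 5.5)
The plan is to decompose $T_{\mathrm{red}}$ into irreducible components and to use the fact that an irreducible closed subset meeting the open set $D_+(B)$ is the closure of that intersection. Write $T_{\mathrm{red}}=T_1\cup\cdots\cup T_s$, the irreducible components. Then $(T\cap U)_{\mathrm{red}}=T_{\mathrm{red}}\cap D_+(B)$ as sets, since $T\cap U=T\cap D_+(B)$ inside $\overline U$ and reduction commutes with passing to an open subscheme. Hence
\[
 (T\cap U)_{\mathrm{red}}=\bigcup_{i=1}^s\bigl(T_i\cap D_+(B)\bigr).
\]
Each $T_i\cap D_+(B)$ is either empty, which happens exactly when $T_i\subseteq V(B)$, or a nonempty open subset of the irreducible space $T_i$. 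In the second case it is dense in $T_i$, so its closure in $\Pj^n_{\bar k}$ is $T_i$; here we use that $T_i$ is closed in $\overline U$, which is closed in $\Pj^n$.

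Closure commutes with finite unions. The projective closure of $(T\cap U)_{\mathrm{red}}$ is therefore $\bigcup_{i:\,T_i\not\subseteq V(B)}T_i$, which is the first claim. Since the closure is taken in $\overline U$, and $\overline U$ is closed in $\Pj^n$, closure in $\overline U$ and in $\Pj^n$ agree.

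For the degree inequality, we next identify the irreducible components of this union. No $T_i$ is contained in another $T_j$, because they are distinct components of $T_{\mathrm{red}}$. So the irreducible components of the union are exactly the $T_i$ with $T_i\not\subseteq V(B)$. By the definitions of $\gdeg$ on the quasiprojective side and on the projective side,
\[
 \gdeg(T\cap U)=\sum_{T_i\not\subseteq V(B)}\deg T_i\le\sum_{i=1}^s\deg T_i=\gdeg(T).
\]

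There is no real obstacle. The only points needing care are that reduction and closure behave as stated with respect to the open immersion $D_+(B)\cap\overline U\hookrightarrow\overline U$, and that the components of the union are not redundant. The second point is exactly where one uses that the $T_i$ are the irreducible components of $T_{\mathrm{red}}$, and not merely some irreducible cover of it.
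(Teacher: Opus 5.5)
Your proof is correct and follows essentially the same route as the paper. Both decompose $T_{\mathrm{red}}$ into irreducible components, observe that each component either lies in $V(B)$ or meets $D_+(B)$ in a dense nonempty open subset, and take the union of closures; your explicit check that the surviving components remain irredundant is a detail the paper leaves implicit.
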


\begin{proof}
Write $T_{\mathrm{red}}=\bigcup_\lambda T_\lambda$ as the union of its irreducible components. If $T_\lambda\subseteq V(B)$, then $T_\lambda\cap U$ is empty. Otherwise $T_\lambda\cap U$ is a nonempty dense open subset of $T_\lambda$, so its closure is $T_\lambda$. Taking the union proves both assertions. Equivalently, passing from the projective equations to the closure of their zero locus on $U$ amounts, on reduced supports, to deleting the components killed by saturation with respect to $B$.
\end{proof}

\begin{lemma}[Mixed-dimensional B\'{e}zout inequality]\label{lem:mixed-bezout}
Let $T\subseteq\Pj^n_{\bar k}$ be projective and let $G$ be a homogeneous form of degree $m\ge1$. Then
\begin{equation}\label{eq:mixed-bezout}
 \gdeg\bigl((T\cap V(G))_{\mathrm{red}}\bigr)
 \le m\,\gdeg(T).
\end{equation}
The same conclusion holds for the zero form when it is regarded as a form of the prescribed degree $m$.
\end{lemma}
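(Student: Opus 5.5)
The plan is to reduce to the case where $T$ is irreducible and then bound each component of the intersection separately. Write $T_{\mathrm{red}}=\bigcup_\lambda T_\lambda$ as the union of its irreducible components. Then
\[
 (T\cap V(G))_{\mathrm{red}}=\bigcup_\lambda(T_\lambda\cap V(G))_{\mathrm{red}}.
\]
Every irreducible component of the left side is an irreducible component of at least one $(T_\lambda\cap V(G))_{\mathrm{red}}$. Hence $\gdeg$ of the left side is at most $\sum_\lambda\gdeg((T_\lambda\cap V(G))_{\mathrm{red}})$. Since $\gdeg(T)=\sum_\lambda\deg T_\lambda$, it suffices to prove \eqref{eq:mixed-bezout} when $T$ is an integral projective variety of dimension $s$ and degree $\deg T$.

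For integral $T$ there are two cases. If $G$ vanishes identically on $T$, which includes the case where $G$ is the zero form of prescribed degree $m$, then $(T\cap V(G))_{\mathrm{red}}=T$. In that case
\[
 \gdeg=\deg T\le m\deg T,
\]
because $m\ge1$. This is exactly where the convention on the zero form enters, and it costs nothing.

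Otherwise $G$ does not vanish on $T$, and $V(G)$ is a hypersurface of degree $m$ not containing $T$. If $s=0$, the intersection is empty or a subset of the single point $T$, so the bound is trivial. If $s\ge1$, the intersection $T\cdot V(G)$ is a proper intersection. By Krull's principal ideal theorem, every irreducible component $Z$ of $T\cap V(G)$ has dimension exactly $s-1$. The refined (classical) B\'ezout theorem for a variety and a hypersurface gives
\[
 \sum_Z i(Z;T\cdot V(G))\deg Z=m\deg T,
\]
with every intersection multiplicity $i(Z;T\cdot V(G))\ge1$. Dropping the multiplicities yields $\sum_Z\deg Z\le m\deg T$. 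Alternatively, one can avoid intersection multiplicities by comparing Hilbert polynomials. The form $G$ is a nonzerodivisor on the homogeneous coordinate ring of the integral variety $T$, so the Hilbert polynomial of $T\cap V(G)$ is $P_T(\nu)-P_T(\nu-m)$, whose leading coefficient gives degree $m\deg T$ in dimension $s-1$. Then the same length-weighted sum over components, as in the proof of \cref{prop:controlled-charts}, bounds the unweighted sum.

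The only point requiring care is this reduction and bookkeeping. Components of the reduced intersection coming from different $T_\lambda$ may coincide, or one may be contained in another. In either situation they are counted at most once on the left but possibly several times on the right, so the inequality only becomes easier. Likewise, the lower-dimensional pieces of a mixed-dimensional $T$ are handled automatically, because the argument is applied componentwise. I expect no real obstacle beyond citing the classical B\'ezout equality (for instance Fulton's refined B\'ezout, or the Hilbert polynomial argument above) over the algebraically closed field $\bar k$.
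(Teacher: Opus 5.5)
Your proposal is correct and follows essentially the same route as the paper. Both arguments reduce to the irreducible components of $T_{\mathrm{red}}$, dispose of the cases where $G$ vanishes on the component (including the zero form) or the component is a point, and then use the degree-$m\deg Z$ proper hypersurface-section cycle with multiplicities dropped, noting that coinciding components from different $Z$ only lower the left side. Your Hilbert-polynomial variant is just another justification of the same B\'ezout equality.
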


\begin{proof}
It is enough to argue separately on each irreducible component $Z$ of $T_{\mathrm{red}}$. If $G$ vanishes identically on $Z$, then $Z$ remains in the intersection and
$\deg Z\le m\deg Z$. If $G$ does not vanish identically and $\dim Z=0$, then $Z\cap V(G)$ is empty. If $G$ does not vanish identically and $\dim Z\ge1$, the proper hypersurface-section cycle on $Z$ has degree $m\deg Z$, so the sum of the degrees of the irreducible components of its reduced support is at most $m\deg Z$. Components arising from different $Z$ may coincide, which only lowers the degree of the union. Summing over $Z$ proves the claim. If $G=0$, the left side is $\gdeg(T)\le m\gdeg(T)$.
\end{proof}

\subsubsection{Uniform degree bounds for recursive jet loci}

For $s\ge0$ put
$N_m(s)=\binom{s+m}{m}$ and $N_m=N_m(r)$.

\begin{lemma}[Degree of all recursive jet loci]\label{lem:degree-WI-m}
Fix $m\ge1$ and assume $\operatorname{char}k>m$. There is a constant $C_m$, depending only on the fixed chart and $m$, such that, whenever $d\ge1$, $1\le e\le d$, $I\subseteq[r]$, and $F\in\cR_I(d)$,
\begin{align}
 \gdeg\bigl(W_I^{(m)}(F)\bigr)
 &\le \delta e^{\abs I}(d+\tau_m)^{N_m(r-\abs I)},
 \label{eq:gdeg-WI-refined}\\
 &\le C_m(d+1)^{N_m}.
 \label{eq:gdeg-WI-m}
\end{align}
Consequently, if $W_I^{(m)}(F)$ is zero-dimensional, then its reduced support has at most
\begin{equation}\label{eq:number-points-zero}
 \Gamma_m(d):=C_m(d+1)^{N_m}
\end{equation}
geometric points.
\end{lemma}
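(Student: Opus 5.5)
The plan is to realize $W_I^{(m)}(F)$ as a successive intersection of the projective closure $\overline U_{\bar k}$ with hypersurfaces, and then apply \cref{lem:mixed-bezout} once per equation, starting from $\gdeg(\overline U_{\bar k})\le\delta$ given by \eqref{eq:closure-component-degree}. The principal-open identity \eqref{eq:boundary-principal} together with \cref{lem:principal-open-closure} lets us work throughout with the projective closed subscheme $T\subseteq\overline U_{\bar k}$ cut out by homogenized equations. Passing to $T\cap U$ only deletes components, so it never increases $\gdeg$.

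The equations are taken in three groups. First come the $\abs I$ slice equations $Q_i(t_i)$, $i\in I$. Since $t_i=z_i$ is an original dehomogenized coordinate, $Q_i(z_i)$ homogenizes to a form of degree $e$, and these contribute a factor $e^{\abs I}$. Second come the jet equations. By \cref{lem:higher-representatives}, for each $\gamma$ with $\gamma_i=0$ ($i\in I$) and $\abs\gamma\le m$, the function $\partial^{[\gamma]}F$ agrees on $U$ with $u_\gamma^{-1}P_{\gamma,F}$, where $u_\gamma$ is a unit on $U$. Hence $V_U(\partial^{[\gamma]}F)=V_U(P_{\gamma,F})$. The polynomial $P_{\gamma,F}$ has degree at most $d+\tau_m$, and we homogenize it to exactly that degree; if $P_{\gamma,F}=0$ we use the zero-form convention of \cref{lem:mixed-bezout}. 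The number of such $\gamma$ is the number of multi-indices of total degree at most $m$ in the $r-\abs I$ unsliced variables, which is $N_m(r-\abs I)$. Here $\gamma=0$ gives $F$ itself, since $F\in\cR_I(d)$ has degree at most $d$. Finally we intersect with $D_+(B)$. Applying \cref{lem:mixed-bezout} iteratively to these $\abs I+N_m(r-\abs I)$ hypersurfaces gives \eqref{eq:gdeg-WI-refined}.

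The main subtlety lies in this step. Homogenizing the $P_{\gamma,F}$ may introduce extra components at infinity, inside $V(x_{a})$, and we must check that these are discarded. They are, because $B=x_a g^{\mathrm{hom}}$ and \cref{lem:principal-open-closure} keeps only components not contained in $V(B)$. One must also verify that the scheme $W_I^{(m)}(F)$, and not just a set, is what is compared. This is harmless, since $\gdeg$ depends only on the reduced support, and the ideal equality on $U$ follows from $u_\gamma$ being a unit.

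For \eqref{eq:gdeg-WI-m}, use $e\le d\le d+1$, $d+\tau_m\le(1+\tau_m)(d+1)$, and the bound $\abs I+N_m(r-\abs I)\le N_m(r)$. The last inequality holds because moving from $s$ to $s+1$ unsliced variables increases $N_m(s)$ by $\binom{s+m}{m-1}\ge1$. Take $C_m=\delta(1+\tau_m)^{N_m}$, maximized over the finitely many charts. If $W_I^{(m)}(F)$ is zero-dimensional, its $\gdeg$ equals the number of geometric points, which gives \eqref{eq:number-points-zero}.
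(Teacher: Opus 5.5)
Your proposal is correct and follows essentially the same route as the paper. You homogenize the $\abs I$ slice equations to degree $e$ and the $N_m(r-\abs I)$ unit-adjusted jet representatives from \cref{lem:higher-representatives} to the common degree $d+\tau_m$. You then apply \cref{lem:mixed-bezout} repeatedly inside $\overline U_{\bar k}$, starting from the bound $\delta$, discard boundary components via \cref{lem:principal-open-closure}, and finish with $\abs I+N_m(r-\abs I)\le N_m(r)$. Your explicit choice $C_m=\delta(1+\tau_m)^{N_m}$ just makes concrete the paper's ``after enlarging $C_m$.''
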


\begin{proof}
Work in the fixed projective closure $\overline U_{\bar k}$. Homogenize each slice equation $Q_i(t_i)$ to degree $e$. By \cref{lem:higher-representatives}, the $N_m(r-\abs I)$ jet equations have polynomial representatives of degree at most $d+\tau_m$, up to multiplication by units on $U$. Since $d\ge1$, the prescribed degree $d+\tau_m$ is positive. Homogenize every representative to this common degree with respect to the standard-chart coordinate; for a representative of smaller degree this only introduces a power of that coordinate, and the zero representative is regarded as the zero form of the same prescribed degree. These operations do not change the zero scheme on $U$. Let $T_I^{(m)}(F)$ be the common projective zero locus of the resulting forms together with the slice equations.

On $U_{\bar k}$ the support of this projective intersection is exactly $W_I^{(m)}(F)_{\mathrm{red}}$. By \cref{lem:principal-open-closure}, passing to the principal open and taking projective closure deletes only components contained in the boundary and cannot increase geometric degree. Repeated use of \cref{lem:mixed-bezout}, with every zero equation assigned the prescribed positive degree just specified, therefore gives
\[
 \gdeg\bigl(W_I^{(m)}(F)\bigr)
 \le\delta e^{\abs I}(d+\tau_m)^{N_m(r-\abs I)}.
\]
Since $e\le d$ and
$\abs I+N_m(r-\abs I)\le N_m(r)$,
the right side is bounded by $C_m(d+1)^{N_m}$ after enlarging $C_m$. The displayed combinatorial inequality follows by decreasing the argument of $N_m(s)$ one step at a time and using
$N_m(s)-N_m(s-1)=\binom{s+m-1}{m-1}\ge1$.
For a zero-dimensional reduced scheme over $\bar k$, every geometric point is a degree-one irreducible component, so \eqref{eq:number-points-zero} follows.
\end{proof}

\subsubsection{Extracting a controlled curve}

\begin{lemma}[Controlled curve extraction]\label{lem:curve-extraction}
If $Z\subseteq U_{\bar k}$ is closed and $\dim Z\ge1$, then there exists an integral curve
$C\subseteq Z_{\mathrm{red}}$
closed in $U_{\bar k}$ such that
\begin{equation}\label{eq:curve-degree-control}
 \deg\overline C\le\gdeg(Z).
\end{equation}
\end{lemma}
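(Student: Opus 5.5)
The plan is to pick an irreducible component of $Z_{\mathrm{red}}$ of positive dimension and cut it down to a curve by hyperplane sections, using \cref{lem:principal-open-closure} and \cref{lem:mixed-bezout} to control degrees. Here $U_{\bar k}=\overline U_{\bar k}\cap D_+(B)$ and $\overline Z$ denotes the projective closure of $Z$ in $\overline U_{\bar k}$. By \cref{lem:principal-open-closure}, the irreducible components of $\overline Z_{\mathrm{red}}$ are exactly the closures of the irreducible components of $Z_{\mathrm{red}}$. Since $\dim Z\ge1$, some component $Z_1$ of $Z_{\mathrm{red}}$ has dimension $s\ge1$. Its closure $\overline Z_1$ is an irreducible component of $\overline Z_{\mathrm{red}}$, so $\deg\overline Z_1\le\gdeg(Z)$.

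If $s=1$, take $C=Z_1$. It is integral, closed in $U_{\bar k}$ because it is a component of the closed set $Z_{\mathrm{red}}$, and $\deg\overline C=\deg\overline Z_1\le\gdeg(Z)$.

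If $s\ge2$, choose a hyperplane $H\subseteq\Pj^n_{\bar k}$ that contains no irreducible component of $\overline Z_1\cap V(B)$. This is possible over the infinite field $\bar k$, since it avoids finitely many proper linear conditions. Then every irreducible component of $\overline Z_1\cap H$ has dimension $s-1$ by Krull's principal ideal theorem, since $H\not\supseteq\overline Z_1$. None of these components lies in $V(B)$. Indeed, such a component $W\subseteq V(B)$ would satisfy $W\subseteq \overline Z_1\cap V(B)\cap H$. But $\overline Z_1\cap V(B)$ has dimension at most $s-1$, because $\overline Z_1\not\subseteq V(B)$. Since $H$ contains none of its components, $\overline Z_1\cap V(B)\cap H$ has dimension at most $s-2$, which is a contradiction.

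By \cref{lem:mixed-bezout} with $m=1$, $\gdeg\bigl((\overline Z_1\cap H)_{\mathrm{red}}\bigr)\le\deg\overline Z_1$. Pick any component $Z_2$ of $(\overline Z_1\cap H)_{\mathrm{red}}$. It meets $U_{\bar k}$ in a dense open subset, and $Z_2\cap U_{\bar k}$ is an irreducible closed subset of $Z_{\mathrm{red}}$ of dimension $s-1$ whose closure $Z_2$ has degree at most $\deg\overline Z_1$.

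Iterating this step $s-1$ times produces an irreducible projective curve $\overline C\subseteq\overline Z_1$, with $\overline C\not\subseteq V(B)$ and $\deg\overline C\le\deg\overline Z_1\le\gdeg(Z)$. Set $C=\overline C\cap U_{\bar k}$ with its reduced structure. It is a nonempty dense open subset of $\overline C$, so it is an integral curve whose projective closure is $\overline C$, by \cref{lem:principal-open-closure}. It is closed in $U_{\bar k}$ because $\overline C$ is closed in $\overline U_{\bar k}$. It lies in $Z_{\mathrm{red}}$ because $\overline C\cap U_{\bar k}\subseteq\overline Z_1\cap U_{\bar k}=Z_1$.

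The main point needing care is the choice of $H$ at each stage. A naive hyperplane section could produce a curve lying entirely in the boundary $V(B)$, whose intersection with $U$ is empty. Choosing $H$ to avoid every component of the boundary intersection, combined with the dimension count above, excludes this. The degree bound itself is just repeated use of \cref{lem:mixed-bezout} with $m=1$.
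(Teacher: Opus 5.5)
Your proof is correct and follows essentially the same route as the paper. You take a positive-dimensional component of $Z_{\mathrm{red}}$, cut its closure by hyperplanes containing no component of its boundary part $\overline Z_1\cap V(B)$ (the paper's $E$) so that the resulting curve is not contained in $V(B)$, and bound the degree by repeated use of \cref{lem:mixed-bezout} with $m=1$. The only structural difference is that you pass to a single component after each cut, whereas the paper intersects with all $\ell-1$ hyperplanes first and then picks a curve component of $L$.

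One small point: you use $H\not\supseteq\overline Z_1$ without building it into your choice of $H$. It does follow from your condition whenever $\overline Z_1\cap V(B)\neq\varnothing$, which is automatic here because $\deg B\ge1$, but it is cleaner to require it explicitly, as the paper does.
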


\begin{proof}
Choose a positive-dimensional irreducible component $Z_0$ of $Z_{\mathrm{red}}$ and let $\overline Z_0$ be its projective closure. More generally, if $Z_i$ is any irreducible component of $Z_{\mathrm{red}}$, then $Z_i$ is closed in $U_{\bar k}$ and therefore
$\overline Z_i\cap U_{\bar k}=Z_i$.
Consequently, the closures of distinct irreducible components are distinct and incomparable: an inclusion $\overline Z_i\subseteq\overline Z_j$ would give $Z_i\subseteq Z_j$ after intersection with $U_{\bar k}$. They are therefore precisely the irreducible components of the projective closure of $Z_{\mathrm{red}}$. In particular,
\begin{equation}\label{eq:component-closure-degree}
 \deg\overline Z_0\le\gdeg(Z).
\end{equation}
Since $Z_0$ is closed in $U_{\bar k}$, one has $Z_0=\overline Z_0\cap U_{\bar k}$. If $\dim Z_0=1$, take $C=Z_0$. Assume $\ell=\dim Z_0\ge2$, and let
\[
 E=\overline Z_0\setminus Z_0
   =\overline Z_0\cap(\overline U_{\bar k}\setminus U_{\bar k}).
\]
Then $E$ is closed and $\dim E\le\ell-1$.

Set $Z^{(0)}=\overline Z_0$ and $E^{(0)}=E_{\mathrm{red}}$. Inductively, after $H_1,\ldots,H_{h-1}$ have been chosen, choose a hyperplane $H_h$ that contains no positive-dimensional irreducible component of either $Z^{(h-1)}$ or $E^{(h-1)}$, and put
\[
 Z^{(h)}=\bigl(Z^{(h-1)}\cap H_h\bigr)_{\mathrm{red}},
 \qquad
 E^{(h)}=\bigl(E^{(h-1)}\cap H_h\bigr)_{\mathrm{red}}.
\]
There are only finitely many components to avoid, and the hyperplanes containing any one of them form a proper closed subset of the dual projective space. Since $\bar k$ is infinite, such an $H_h$ exists. For every positive-dimensional projective irreducible component, a hyperplane not containing it has nonempty intersection of dimension exactly one less. It follows inductively that
$\dim Z^{(h)}=\ell-h, \qquad \dim E^{(h)}\le\max\{\ell-1-h,-\infty\}$.
After $\ell-1$ steps, the reduced scheme
\[
 L=Z^{(\ell-1)}
 =\bigl(\overline Z_0\cap H_1\cap\cdots\cap H_{\ell-1}\bigr)_{\mathrm{red}}
\]
has dimension one, while $(L\cap E)_{\mathrm{red}}\subseteq E^{(\ell-1)}$ is zero-dimensional or empty. Hence no one-dimensional irreducible component of $L$ is contained in $E$. Choose one such component $\overline C$ and put $C=\overline C\cap U$. Then $C$ is an integral curve, closed in $U_{\bar k}$, and contained in $Z_0$.

Applying \cref{lem:mixed-bezout} successively to these degree-one sections gives
$\gdeg(L)\le\deg\overline Z_0$.
Therefore, by \eqref{eq:component-closure-degree},
$\deg\overline C\le\deg\overline Z_0\le\gdeg(Z)$.
\end{proof}

Because $t:U\to\A^r_k$ is \'{e}tale, it is quasi-finite. This forces a coordinate to vary on every curve.

\begin{lemma}[A varying unsliced coordinate]\label{lem:varying-coordinate}
Let $I\subsetneq[r]$, let $C\subseteq Y_{I,\bar k}$ be an integral curve, and assume each $Q_i$ is irreducible. Then there exists
$j\in[r]\setminus I$
such that $t_j|_C$ is nonconstant.
\end{lemma}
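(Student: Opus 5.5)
We argue by contradiction, using quasi-finiteness of the étale map $t:U\to\A^r_k$. Suppose that for every $j\in[r]\setminus I$ the restriction $t_j|_C$ is constant. Since $C$ is an integral curve over the algebraically closed field $\bar k$, a regular function on $C$ that is constant on closed points is a scalar. So $t_j|_C=c_j\in\bar k$ for each unsliced index $j$.

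We next treat the sliced coordinates. For $i\in I$, the function $Q_i(t_i)$ vanishes on $Y_I\supseteq C$. As $C$ is integral, $\prod_{\theta}(t_i|_C-\theta)=0$ in the domain $\mathcal O(C)$, where $\theta$ runs over the roots of $Q_i$ in $\bar k$. Hence $t_i|_C=\theta_i$ for some root $\theta_i$ of $Q_i$. Separability of $Q_i$, which holds because $Q_i$ is irreducible over the perfect field $k$, is convenient here but not needed: the product vanishing already forces one factor to vanish.

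Together these give that the composite $C\hookrightarrow U_{\bar k}\xrightarrow{t}\A^r_{\bar k}$ is constant, with image the single closed point $P=(c_j,\theta_i)$. Therefore $C\subseteq t^{-1}(P)$. An étale morphism is quasi-finite, so the fiber $t^{-1}(P)$ is a finite discrete set of points. It cannot contain the one-dimensional irreducible $C$, which is a contradiction.

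The only step needing care is the passage from "constant" on closed points to equality with a scalar in $\mathcal O(C)$, together with the use of quasi-finiteness after base change to $\bar k$. Étaleness, and hence quasi-finiteness, is stable under base change by \cref{prop:controlled-charts}, so this causes no difficulty. Note also that the hypothesis $I\subsetneq[r]$ is exactly what guarantees that an unsliced index exists.
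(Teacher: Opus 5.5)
Your proposal is correct and follows the paper's argument essentially verbatim: the sliced coordinates are forced to be constant by $Q_i(t_i)=0$ on the integral curve, and if the unsliced ones were also constant, $C$ would lie in a fiber of the quasi-finite \'{e}tale map $t$. Your remark that the vanishing of a product in the domain $\mathcal O(C)$ makes separability unnecessary is a harmless refinement of the paper's ``finite image plus irreducibility'' step.
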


\begin{proof}
For $i\in I$, the regular function $t_i$ on the integral $\bar k$-curve $C$ satisfies $Q_i(t_i)=0$. Since $Q_i$ splits into distinct linear factors over $\bar k$, the image of $C$ under $t_i$ is contained in a finite set; irreducibility implies that $t_i|_C$ is constant. If every $t_j$ for $j\notin I$ were also constant, then $t|_C$ would be constant. This would place $C$ in a fiber of the quasi-finite morphism $t$, which is impossible.
\end{proof}

\subsubsection{How many values can a curve omit?}

Recall from \eqref{eq:boundary-principal} that
$U=\overline U\cap D_+(B)$, with $\deg B\le b$.

\begin{lemma}[Boundary-value lemma]\label{lem:boundary-values}
Let $C\subseteq U_{\bar k}$ be an integral curve and suppose $t_j|_C$ is nonconstant. Then
\begin{equation}\label{eq:missing-value-bound}
 \#\left(\bar k\setminus t_j(C(\bar k))\right)
 \le b\deg\overline C.
\end{equation}
Consequently, if
$C\subseteq W_I^{(m)}(F)$ and $\deg\overline C\le\Gamma_m(d)$,
then
$\#\left(\bar k\setminus t_j(C(\bar k))\right)\le b\Gamma_m(d)$.
\end{lemma}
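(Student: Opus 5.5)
The plan is to study the nonconstant rational function $t_j|_C$ through its extension to the projective closure $\overline C$. Let $\widetilde C\to\overline C$ be the normalization; it is a smooth projective integral curve over $\bar k$. The function $t_j=z_j$ is the ratio $x_j/x_{a}$ of two coordinate linear forms, where $x_a$ is the chart variable, so it defines a finite surjective morphism $\phi:\widetilde C\to\Pj^1_{\bar k}$, nonconstant by hypothesis. Since $\phi$ is surjective, every value $\lambda\in\bar k$ has a preimage $P\in\widetilde C$. If $\lambda\notin t_j(C(\bar k))$, then no preimage of $\lambda$ lies over $C=\overline C\cap U_{\bar k}$ (here we use that $C$ is closed in $U_{\bar k}$ and that $\widetilde C\to\overline C$ is surjective). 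Hence every preimage of $\lambda$ maps into the boundary $\overline C\setminus C=\overline C\cap V(B)$. Since $C$ lies in the chart $D_+(x_a)$, we have $\overline C\cap V(x_a)\subseteq\overline C\cap V(B)$, because $B=x_a g^{\mathrm{hom}}$. So it suffices to bound the number of boundary points.

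Next I would bound the boundary. The curve $\overline C$ is integral and meets $U$, so $B$ does not vanish identically on $\overline C$. Projective B\'ezout on the curve (equivalently \cref{lem:mixed-bezout} with $T=\overline C$) gives
\[
 \#\bigl(\overline C\cap V(B)\bigr)(\bar k)\le\deg B\cdot\deg\overline C\le b\deg\overline C .
\]
Each omitted value $\lambda$ determines at least one boundary point $x\in\overline C\cap V(B)$. Distinct omitted values give distinct points, because at a boundary point with $x_a(x)\ne0$ the value $t_j(x)=\lambda$ is determined by $x$. The one subtlety is boundary points where $x_a=0$. At such a point, a branch of $\widetilde C$ could map to a finite $\lambda$ even though $x_a$ vanishes, namely when $x_j$ also vanishes there. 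Several omitted values could then come from one point of $\overline C$ with several branches. To handle this I would count branches rather than points, i.e.\ points of $\widetilde C$ over $V(B)$. The number of such points is at most the degree of the divisor $\widetilde C^*(B)$ on $\widetilde C$, which equals $\deg B\cdot\deg\overline C\le b\deg\overline C$, because the pullback of $\mathcal O(1)$ to $\widetilde C$ has degree $\deg\overline C$. Each point of $\widetilde C$ has a single $\phi$-value, so the map from omitted values to branch points over $V(B)$ (choose one preimage) is injective. This gives \eqref{eq:missing-value-bound}.

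The consequence is immediate: substitute $\deg\overline C\le\Gamma_m(d)$. Such a curve is available from \cref{lem:curve-extraction} combined with \cref{lem:degree-WI-m}.

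The main obstacle is this branch-counting point at the hyperplane $x_a=0$. Working with the divisor of $B$ on the normalization removes it cleanly, so I would state the count on $\widetilde C$ from the start rather than on $\overline C$.
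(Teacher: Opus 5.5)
Your proposal is correct and is essentially the paper's argument: you extend $t_j$ to a finite surjective map $\widetilde C\to\Pj^1_{\bar k}$ on the normalization, note that every preimage of an omitted value lies over $\overline C\cap V(B)$, and bound the number of points of $\widetilde C$ over $V(B)$ by $\deg\operatorname{div}(\nu^*B)=\deg B\cdot\deg\overline C\le b\deg\overline C$. The paper counts on $\widetilde C$ from the outset, so your intermediate discussion of points of $\overline C$ and of the hyperplane $x_a=0$ is unnecessary, but your final branch count is exactly what the paper does.
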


\begin{proof}
Because $C$ is closed in $U_{\bar k}$, one has $C=\overline C\cap U_{\bar k}$. Let $\nu:\widetilde C\to\overline C$ be the normalization. The nonconstant rational function $t_j$ extends to a nonconstant morphism
$\widetilde t_j:\widetilde C\longrightarrow\Pj^1_{\bar k}$,
which is finite and surjective. The boundary
$S_C=\widetilde C\setminus\nu^{-1}(C)$
is finite. If $a\in\bar k$ is omitted by $t_j(C)$, then the nonempty fiber $\widetilde t_j^{-1}(a)$ is contained in $S_C$. Fibers over distinct values are disjoint, so
$\#\bigl(\bar k\setminus t_j(C(\bar k))\bigr)\le\#S_C$.

Every point of $S_C$ lies over $\overline C\cap V(B)$. Put $b_B=\deg B\le b$. Since $C\subseteq D_+(B)$, the curve $\overline C$ is not contained in $V(B)$, so the pullback of $B$ is a nonzero section of
$\nu^*\mathcal O_{\overline C}(b_B)$. Its zero divisor has degree
$\deg\operatorname{div}(\nu^*B) =b_B\deg\overline C \le b\deg\overline C$.
Every point of $S_C$ occurs in this divisor with positive multiplicity. Hence $\#S_C\le b\deg\overline C$, proving \eqref{eq:missing-value-bound}; the final assertion is immediate.
\end{proof}

\subsubsection{Irreducible Frobenius-orbit slices}\label{sec:orbits}

Let $\cI_e(q)$ denote the set of monic irreducible polynomials of degree $e$ in $\F_q[T]$, and put
$I_e(q)=\#\cI_e(q)$.
The exact formula is
\begin{equation}\label{eq:number-irreducibles}
 I_e(q)=\frac1e\sum_{a\mid e}\mu(a)q^{e/a}.
\end{equation}

\begin{lemma}[A uniform lower bound]\label{lem:irreducible-count}
For every prime power $q\ge2$ and every $e\ge1$,
\begin{equation}\label{eq:irreducible-lower}
 I_e(q)\ge\frac{q^e}{2e}.
\end{equation}
\end{lemma}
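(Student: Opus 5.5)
Starting from the exact formula \eqref{eq:number-irreducibles}, I will isolate the main term $q^e/e$ and bound the Möbius error. Every divisor $a>1$ of $e$ satisfies $e/a\le\floor{e/2}$, so
\[
 \Bigl|\sum_{a\mid e,\,a>1}\mu(a)q^{e/a}\Bigr|
 \le\sum_{h=1}^{\floor{e/2}}q^h
 =\frac{q^{\floor{e/2}+1}-q}{q-1}
 \le \frac{q}{q-1}\,q^{\floor{e/2}}\le 2q^{\floor{e/2}}.
\]
Hence $eI_e(q)\ge q^e-2q^{\floor{e/2}}$. The claim $eI_e(q)\ge q^e/2$ then reduces to $q^e\ge4q^{\floor{e/2}}$, i.e.\ $q^{\lceil e/2\rceil}\ge4$.

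This inequality holds whenever $q\ge4$ and $e\ge1$, and also whenever $q\ge2$ and $e\ge3$, since then $\lceil e/2\rceil\ge2$. That leaves the small cases, which I will check directly. For $e=1$, $I_1(q)=q\ge q/2$. For $e=2$, $I_2(q)=(q^2-q)/2\ge q^2/4$ is equivalent to $q\ge2$. For $q=2,3$ with $e\ge3$, the general estimate already applies. Since $e=1,2$ are handled for every $q$, the bound \eqref{eq:irreducible-lower} holds in all cases.

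The only delicate point is obtaining a constant no worse than $2$ in the geometric-sum bound, using $q/(q-1)\le2$, and making sure the case split exactly covers the exceptions. An alternative route avoids the case split: use $q^e=\sum_{a\mid e}aI_a(q)$ to get $eI_e(q)\ge q^e-\sum_{h\le e/2}q^h$. This gives the same estimate.
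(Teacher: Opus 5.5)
Your proof is correct and is essentially the paper's argument: the M\"obius formula, the bound $\sum_{h=1}^{\floor{e/2}}q^h\le 2q^{\floor{e/2}}$ on the terms with $a>1$, and a direct check of small cases. Because you keep the exponent $\floor{e/2}$ where the paper uses $2q^{e/2}$, the case $e=3$ falls under the general estimate, so you only check $e=1,2$ directly, whereas the paper checks $e=2,3$. Your closing remark that the identity $q^e=\sum_{a\mid e}aI_a(q)$ ``avoids the case split'' is not justified, since it gives the same inequality. A case-free finish instead keeps the $-1$: with $k=\floor{e/2}$, one has $\sum_{h=1}^{k}q^h\le 2(q^k-1)\le q^{2k}/2\le q^e/2$, using $(q^k-2)^2\ge0$.
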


\begin{proof}
For $e=1$, this is immediate. For $e\ge2$, \eqref{eq:number-irreducibles} gives
$eI_e(q) \ge q^e-\sum_{m=1}^{\floor{e/2}}q^m$.
For $e=2,3$ the desired inequality is checked directly. If $e\ge4$, then
$\sum_{m=1}^{\floor{e/2}}q^m \le 2q^{e/2} \le\frac{q^e}{2}$,
where the last inequality uses $q^{e/2}\ge4$. This proves \eqref{eq:irreducible-lower}. See also \cite[Chapter~3]{LidlNiederreiter1997}.
\end{proof}

An irreducible $Q\in\cI_e(q)$ determines a Frobenius orbit
$\orb(Q)=\set{\alpha,\alpha^q,\ldots,\alpha^{q^{e-1}}}\subseteq\overline{\F}_q$
consisting of its roots. Distinct monic irreducible polynomials have disjoint root orbits.

\begin{proposition}[Orbit-slice hitting]\label{prop:orbit-hitting}
Let $C\subseteq U_{\overline{\F}_q}$ be an integral curve, let $t_j|_C$ be nonconstant, and suppose
$\deg\overline C\le\Gamma$.
Choose $Q$ uniformly from $\cI_e(q)$. Then
\begin{equation}\label{eq:orbit-miss}
 \Prob_Q\left(C\cap V(Q(t_j))=\varnothing\right)
 \le 2b\Gamma q^{-e}.
\end{equation}
\end{proposition}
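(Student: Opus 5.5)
The plan is to count the monic irreducible $Q$ for which the slice misses $C$, and compare that count with $I_e(q)$ via \cref{lem:irreducible-count}. The key observation is that $C\cap V(Q(t_j))=\varnothing$ happens exactly when no geometric point $x\in C(\overline{\F}_q)$ has $Q(t_j(x))=0$. That in turn means every root of $Q$, that is, every element of the Frobenius orbit $\orb(Q)$, lies in the omitted set $\Omega=\overline{\F}_q\setminus t_j(C(\overline{\F}_q))$. Here we use that the geometric points of $C\cap V(Q(t_j))$ are precisely the points of $C$ at which $t_j$ takes a root of $Q$. For the emptiness statement it suffices to test geometric points, and since $Q$ is monic the closed subscheme $V(Q(t_j))$ on $C$ has support equal to the preimage of the root set. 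In particular, if $Q$ is a missing polynomial then $\orb(Q)\subseteq\Omega$; all we will actually need is that $\Omega$ contains at least one root of $Q$.

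Next we bound the number of missing $Q$. By \cref{lem:boundary-values}, applied with $\deg\overline C\le\Gamma$, we have $\#\Omega\le b\deg\overline C\le b\Gamma$. (The lemma is stated over $\bar k$; here $\bar k=\overline{\F}_q$, and the hypothesis that $t_j|_C$ is nonconstant is exactly what it requires.) Because distinct monic irreducible polynomials have disjoint root orbits, the map sending a missing $Q$ to any chosen root in $\Omega$ is injective. Hence the number of missing $Q$ is at most $\#\Omega\le b\Gamma$. One could even divide by $e$, since each orbit has exactly $e$ elements and all of them lie in $\Omega$, but this is not needed.

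Finally, dividing by $I_e(q)\ge q^e/(2e)\ge q^e/2$ gives
\[
 \Prob_Q\bigl(C\cap V(Q(t_j))=\varnothing\bigr)\le\frac{b\Gamma}{I_e(q)}\le 2b\Gamma q^{-e},
\]
and using the orbit-size refinement would in fact give $2b\Gamma/q^e$ with an extra factor $1/e$ to spare.

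The only delicate step is the first one: justifying that scheme-theoretic emptiness of $C\cap V(Q(t_j))$ is equivalent to the root orbit avoiding the image $t_j(C(\overline{\F}_q))$. The point to check is that $C$ is a curve over $\overline{\F}_q$, while $Q(t_j)$ is the pullback of a polynomial with coefficients in $\F_q$. I would handle this by noting that over $\overline{\F}_q$ the polynomial $Q$ factors as $\prod_{\alpha\in\orb(Q)}(T-\alpha)$. Then $V(Q(t_j))\cap C$ is set-theoretically the union of the fibers $t_j^{-1}(\alpha)\cap C$, and each such fiber is nonempty exactly when $\alpha\in t_j(C(\overline{\F}_q))$, because $C$ is of finite type over an algebraically closed field. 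The remaining steps are routine counting.
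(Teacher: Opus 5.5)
Your route is the paper's route: a missing $Q$ forces $\orb(Q)\subseteq\Omega$, \cref{lem:boundary-values} gives $\#\Omega\le b\Gamma$, and you compare the count with \cref{lem:irreducible-count}. The final step, however, rests on a false inequality. The claim $q^e/(2e)\ge q^e/2$ fails for every $e\ge2$; the inequality runs the other way. Your main line uses only the injection sending $Q$ to one chosen root in $\Omega$, which gives at most $b\Gamma$ missing polynomials. That yields only
\[
 \Prob_Q\bigl(C\cap V(Q(t_j))=\varnothing\bigr)\le\frac{b\Gamma}{I_e(q)}\le 2e\,b\Gamma q^{-e},
\]
which is weaker than the statement by a factor of $e$. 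A sharper count of irreducibles cannot repair this. Since $I_e(q)\le q^e/e$, no lower bound $I_e(q)\ge c\,q^e$ with $c$ independent of $e$ is available.

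The orbit-size refinement you set aside as unnecessary is exactly the missing ingredient. A missing $Q$ has all $e$ of its roots in $\Omega$; these roots are distinct because $Q$ is separable over the perfect field $\F_q$. Root orbits of distinct monic irreducibles are also disjoint. Hence the number of missing $Q$ is at most $\#\Omega/e\le b\Gamma/e$. Dividing by $I_e(q)\ge q^e/(2e)$ then gives precisely $2b\Gamma q^{-e}$. The factor $1/e$ is entirely consumed by the $1/e$ in \cref{lem:irreducible-count}, so there is no extra factor to spare, contrary to your closing remark.

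With this correction your argument coincides with the paper's proof. Your justification that an empty intersection forces every root of $Q$ into $\Omega$ is fine. Only that direction of the equivalence is needed.
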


\begin{proof}
Let
$M_C=\overline{\F}_q\setminus t_j(C(\overline{\F}_q))$.
By \cref{lem:boundary-values}, $\#M_C\le b\Gamma$. If
$C\cap V(Q(t_j))=\varnothing$, then every root of $Q$ belongs to $M_C$, so
$\orb(Q)\subseteq M_C$.
The root orbits for distinct $Q$ are disjoint and each has cardinality $e$. Hence the number of missed polynomials is at most $\#M_C/e$. Notice that no descent hypothesis on $C$ is used here: the finite set $M_C$ need not be Frobenius-stable. Using \cref{lem:irreducible-count},
\[
 \Prob_Q(C\cap V(Q(t_j))=\varnothing)
 \le \frac{\#M_C/e}{q^e/(2e)}
 \le 2b\Gamma q^{-e}.
\]
\end{proof}

The estimate depends on the boundary only through $b$. This is the point at which quasiprojectivity is handled: a fixed divisor need not meet $C$, but a random closed orbit in the coordinate line misses $C$ only when all of its roots fall among the finitely many omitted coordinate values.

\subsubsection{The full normal \texorpdfstring{$Q$}{Q}-adic tower}\label{sec:normal}

Fix $m\ge1$, assume $\operatorname{char}k>m$, and let $1\le e\le d$. Fix
$I\subsetneq[r],\qquad j\notin I,\qquad Q=Q_j\in\cI_e(q)$.
For uniform $F\in\cR_I(d)$, write the independent expansion
\begin{equation}\label{eq:normal-qadic-expansion}
 F=\sum_{a=0}^{\floor{d/e}}Q(t_j)^aF_a,
 \qquad
 F_a\in\cR_{I\cup\set{j}}(d-ae),
\end{equation}
as in \cref{cor:uniform-randomness}.

\begin{lemma}[Residue-degree divisibility]\label{lem:degree-divisibility}
Let $y$ be a closed point of $Y_{I\cup\set{j}}$. Evaluation of $t_j$ induces an embedding
\begin{equation}\label{eq:subfield-embedding}
 \F_q[T]/(Q)\hookrightarrow\kappa(y).
\end{equation}
In particular,
$e\mid[\kappa(y):\F_q]$.
\end{lemma}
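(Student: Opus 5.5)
The plan is to make the ring homomorphism explicit and then take degrees over $\F_q$. The closed point $y$ lies in $Y_{I\cup\set{j}}$, which by definition is contained in $V(Q(t_j))$. So the function $t_j$, restricted to $y$, gives an element $\bar t_j(y)\in\kappa(y)$ with $Q(\bar t_j(y))=0$.

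The first step is to consider the $\F_q$-algebra homomorphism $\F_q[T]\to\kappa(y)$ defined by $T\mapsto t_j(y)$, that is, by composing $\F_q[T]\to\Gamma(U,\mathcal O_U)$, $T\mapsto t_j$, with evaluation at $y$. The ideal $(Q)$ lies in its kernel, because $Q(t_j)$ belongs to the ideal of $Y_{I\cup\set{j}}$ and hence vanishes in $\kappa(y)$. The map therefore factors through $\F_q[T]/(Q)$.

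The second step uses irreducibility. Since $Q\in\cI_e(q)$ is irreducible, $\F_q[T]/(Q)$ is a field, isomorphic to $\F_{q^e}$. Any ring homomorphism from a field to a nonzero ring is injective, so the induced map is the embedding \eqref{eq:subfield-embedding}.

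Finally, $\kappa(y)$ is a finite extension of $\F_q$, because $y$ is a closed point of a scheme of finite type over $\F_q$. The tower $\F_q\subseteq\F_q[T]/(Q)\subseteq\kappa(y)$ and the multiplicativity of degrees then give $e=[\F_q[T]/(Q):\F_q]$ dividing $[\kappa(y):\F_q]$.

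There is no genuine obstacle in this argument. The only point needing care is that the relevant ideal contains $Q(t_j)$ scheme-theoretically, so that the factorization is legitimate, and this holds by the definition of $Y_{I\cup\set{j}}$ as an intersection with $V(Q_j(t_j))$.
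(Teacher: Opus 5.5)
Your proposal is correct and matches the paper's proof. The paper notes that $t_j(y)$ is a root of the irreducible polynomial $Q$, so its minimal polynomial over $\F_q$ is $Q$; this is equivalent to your injectivity of the induced map from the field $\F_q[T]/(Q)$, and both arguments then conclude $e\mid[\kappa(y):\F_q]$ by the tower law.
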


\begin{proof}
The element $t_j(y)\in\kappa(y)$ is a root of the irreducible polynomial $Q$. Its minimal polynomial over $\F_q$ is therefore $Q$, which proves \eqref{eq:subfield-embedding}; the divisibility follows from the tower law for finite fields.
\end{proof}

\begin{lemma}[Evaluation entropy]\label{lem:evaluation-entropy}
Let $y$ be as in \cref{lem:degree-divisibility}, and let $L\ge0$. The evaluation map
$\operatorname{ev}_y:\cR_{I\cup\set{j}}(L)\longrightarrow\kappa(y)$
has image dimension at least
\begin{equation}\label{eq:evaluation-rank}
 \dim_{\F_q}\im(\operatorname{ev}_y)
 \ge\min\set{e,L+1}.
\end{equation}
\end{lemma}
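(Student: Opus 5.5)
The plan is to exhibit enough elements of $\cR_{I\cup\set{j}}(L)$ whose values at $y$ are $\F_q$-linearly independent in $\kappa(y)$. The natural candidates are the powers of the coordinate $t_j=z_j$ itself. Since $j\in I\cup\set{j}$, the normal-form space $\cR_{I\cup\set{j}}(L)$ contains the monomials $t_j^a=z_j^a$ for $0\le a\le\min\{e-1,L\}$. Indeed, such a monomial has total degree $a\le L$, its exponent in $z_j$ is $a<e$, and its exponents in the variables indexed by $I$ are zero, hence less than $e$. (If $L<0$ the claimed bound is vacuous, but $L\ge0$ is assumed.) These are $\min\set{e,L+1}$ elements of $\cR_{I\cup\set{j}}(L)$, and $\operatorname{ev}_y$ is $\F_q$-linear.

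Next I evaluate them at $y$. Put $\theta=t_j(y)\in\kappa(y)$. By \cref{lem:degree-divisibility}, $\theta$ is a root of the monic irreducible polynomial $Q$ of degree $e$, so its minimal polynomial over $\F_q$ has degree $e$. Consequently $1,\theta,\ldots,\theta^{e-1}$ are $\F_q$-linearly independent in $\kappa(y)$. In particular the subfamily $\theta^a$, $0\le a\le\min\{e-1,L\}$, is linearly independent. These are exactly the images $\operatorname{ev}_y(z_j^a)$, so the image of $\operatorname{ev}_y$ has dimension at least $\min\set{e,L+1}$. This proves \eqref{eq:evaluation-rank}.

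I expect no serious obstacle. The only point needing care is that evaluation of the ambient polynomial $z_j^a$ at the closed point $y$ of $Y_{I\cup\set{j}}\subseteq U$ really is $\theta^a$. This holds because $t_j$ is literally the restriction of the original dehomogenized coordinate $z_j$, which is the point where the paper's insistence on original coordinates is used. It also matters that the embedding \eqref{eq:subfield-embedding} is induced by this evaluation, so that the minimal-polynomial argument applies inside $\kappa(y)$ viewed as an $\F_q$-vector space.
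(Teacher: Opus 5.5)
Your proposal is correct and is essentially the paper's proof. The paper also sets $h=\min\set{e,L+1}$ and observes that $1,t_j,\ldots,t_j^{h-1}\in\cR_{I\cup\set{j}}(L)$; their values at $y$ are $\F_q$-linearly independent because the minimal polynomial of $t_j(y)$ has degree $e$.
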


\begin{proof}
Put $h=\min\set{e,L+1}$. The polynomials
$1,t_j,\ldots,t_j^{h-1}$
belong to $\cR_{I\cup\set{j}}(L)$. Their values at $y$ are linearly independent over $\F_q$, because the minimal polynomial of $t_j(y)$ has degree $e$.
\end{proof}

\begin{lemma}[Normal-tower entropy]\label{lem:normal-tower}
Condition on $F_0$, and let $y$ be a closed point of
$W_{I\cup\set{j}}^{(m)}(F_0)$. Then
\begin{equation}\label{eq:tower-probability}
 \Prob\left(
 \partial_j^{[a]}F(y)=0\text{ for }1\le a\le m
 \,\middle|\,F_0
 \right)
 \le q^{-\Lambda_m(d,e)},
\end{equation}
where
$\Lambda_m(d,e)=\min\set{me,d-e+1}$.
\end{lemma}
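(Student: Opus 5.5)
The plan is to expand each normal Hasse derivative $\partial_j^{[a]}F$ along the $Q$-adic decomposition \eqref{eq:normal-qadic-expansion}, show that at $y$ the resulting system in the blocks $F_1,\ldots,F_m$ is triangular with unit diagonal, and then reveal the blocks one at a time, using \cref{lem:evaluation-entropy} for each new block.

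\emph{Step 1: expansion at $y$.} The derivation $\partial_j$ satisfies $\partial_j(t_j)=1$. Since $\operatorname{char}k>m$, the Hasse derivatives of a polynomial in $t_j$ are computed by the formal one-variable Hasse derivatives, so $\partial_j^{[c]}\bigl(Q(t_j)^b\bigr)=(Q^b)^{[c]}(t_j)$. By the Hasse product rule,
\[
 \partial_j^{[a]}\bigl(Q(t_j)^bF_b\bigr)=\sum_{c+c'=a}(Q^b)^{[c]}(t_j)\,\partial_j^{[c']}F_b .
\]
At $y\in Y_{I\cup\{j\}}$ we have $Q(t_j(y))=0$, because $Q(t_j)$ lies in the ideal of $Y_{I\cup\{j\}}$.

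\emph{Step 2: which terms survive.} The polynomial $Q^b$ vanishes to order $b$ at the root $t_j(y)$, so $(Q^b)^{[c]}(t_j(y))=0$ for $c<b$, and $(Q^b)^{[b]}(t_j(y))=Q'(t_j(y))^b$. Hence
\[
 \partial_j^{[a]}F(y)=Q'(t_j(y))^a\,F_a(y)+\Phi_a(F_0,\ldots,F_{a-1})(y),
\]
where $\Phi_a$ depends only on the lower blocks. Here $Q'(t_j(y))\ne0$ because $Q$ is irreducible over a perfect field, hence separable. Blocks with $d-ae<0$ are identically zero and simply contribute nothing.

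\emph{Step 3: sequential conditioning.} By \cref{cor:uniform-randomness}, the blocks $F_a$ are independent, and $F_a$ is uniform on $\cR_{I\cup\{j\}}(d-ae)$. Condition successively on $F_0,F_1,\ldots,F_{a-1}$. Given these, the event $\partial_j^{[a]}F(y)=0$ forces $F_a(y)$ to equal one prescribed element of $\kappa(y)$. Since $\operatorname{ev}_y$ is $\F_q$-linear and $F_a$ is uniform, $\operatorname{ev}_y(F_a)$ is uniform on the image of $\operatorname{ev}_y$. So this conditional probability is at most $q^{-\dim\im\operatorname{ev}_y}$, which is at most $q^{-\rho_a}$ with $\rho_a=\max\{0,\min\{e,d-ae+1\}\}$ by \cref{lem:evaluation-entropy}; the bound is trivial when $d-ae<0$. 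Multiplying over $a=1,\ldots,m$ by the chain rule for conditional probabilities gives the bound $q^{-\sum_a\rho_a}$. The hypothesis that $y\in W^{(m)}_{I\cup\{j\}}(F_0)$ is only used to say that $y$ is fixed once $F_0$ is; $y$ does not depend on the higher blocks.

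\emph{Step 4: the arithmetic identity.} Put $x=d+1$. Then $\rho_a$ is the length of $[ae,(a+1)e)\cap[0,x)$. Summing over $1\le a\le m$ gives the length of $[e,(m+1)e)\cap[0,x)$, which equals $\min\{me,\,x-e\}$ because $e\le d$ makes $x-e\ge1$. This is $\Lambda_m(d,e)$.

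The main obstacle is Step 2. One must justify that $\partial_j$ acts on functions of $t_j$ as $d/dt_j$, including in the divided-power sense. This holds because $\partial_j$ is a derivation with $\partial_j(t_j)=1$, so it agrees with $d/dt_j$ on $k[t_j]$, and the Hasse derivatives are defined since factorials up to $m$ are invertible. One must also verify the vanishing-order computation for $(Q^b)^{[c]}$ at a simple root, which is a direct Leibniz expansion.
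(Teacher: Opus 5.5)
Your proposal is correct and follows essentially the same route as the paper. Both arguments use the triangular identity $\partial_j^{[a]}F\equiv Q'(t_j)^aF_a+\Phi_a(F_0,\ldots,F_{a-1})$ modulo $Q(t_j)$, sequential conditioning on the independent blocks with the evaluation-entropy bound $q^{-\rho_a}$, and the interval-length computation giving $\sum_a\rho_a=\min\{me,d-e+1\}$. The only difference is cosmetic: you get the triangular identity from the Hasse--Leibniz rule and the order-$b$ vanishing of $Q^b$ at the simple root $t_j(y)$, while the paper reads it off from the truncated Taylor homomorphism $\Theta_j$ via $\Theta_j(Q(t_j))=Q(t_j+u)$, which amounts to the same computation.
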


\begin{proof}
By \cref{lem:etale-taylor}, setting $\xi_j=u$ and $\xi_\ell=0$ for $\ell\ne j$ gives the truncated Taylor homomorphism
\[
 \Theta_j(G)=\sum_{a=0}^m\partial_j^{[a]}G\,u^a
 \quad\bmod u^{m+1},
\]
and it satisfies
$\Theta_j\bigl(Q(t_j)\bigr)=Q(t_j+u)$.
Modulo $Q(t_j)$, the constant term of $Q(t_j+u)$ is zero and its linear term is $Q'(t_j)u$. Consequently, for $1\le a\le m$,
\begin{equation}\label{eq:triangular}
 \partial_j^{[a]}F
 \equiv Q'(t_j)^aF_a+\Phi_a(F_0,\ldots,F_{a-1})
 \pmod{Q(t_j)},
\end{equation}
where $F_a$ is interpreted as zero if $d-ae<0$. Indeed, the coefficient of $u^a$ in
$Q(t_j+u)^a\Theta_j(F_a)$ is $Q'(t_j)^aF_a$, powers $Q^b$ with $b>a$ cannot contribute, and all terms with $b<a$ depend only on earlier blocks. Since $Q$ is separable, $Q'(t_j)$ is a unit on $Y_{I\cup\set{j}}$.

For $a\ge0$, let
$\mathscr F_a=\sigma(F_0,\ldots,F_a)$,
and, for $0\le a\le m$, let
$\mathcal A_a=\set{\partial_j^{[b]}F(y)=0\text{ for }1\le b\le a}$, and set $\mathcal A_0=\Omega$.
Fix a realization of $F_0$. The point $y$, its residue field, and every constant term in the equations below are then fixed. The event $\mathcal A_{a-1}$ is $\mathscr F_{a-1}$-measurable. For every realization of $\mathscr F_{a-1}$ lying in $\mathcal A_{a-1}$, the right side of \eqref{eq:triangular} is an affine function of the still independent and uniform block $F_a$. Its linear part is evaluation at $y$, followed by multiplication by the unit $Q'(t_j)^a$. By \cref{lem:evaluation-entropy}, whenever
$\Prob(\mathcal A_{a-1}\mid F_0)>0$,
\[
 \Prob\left(
 \mathcal A_a
 \,\middle|\,
 \mathcal A_{a-1},F_0
 \right)
 \le q^{-\rho_a},
 \qquad
 \rho_a=\max\set{0,\min\set{e,d-ae+1}}.
\]
If $d-ae<0$, then $F_a=0$ and $\rho_a=0$, so this inequality remains valid. Since $\mathcal A_{a-1}$ depends only on $F_0,\ldots,F_{a-1}$, conditioning on it does not alter the uniform distribution of the independent block $F_a$. We now prove inductively that
\begin{equation}\label{eq:normal-tower-induction}
 \Prob(\mathcal A_a\mid F_0)
 \le q^{-\sum_{b=1}^a\rho_b}
 \qquad(0\le a\le m).
\end{equation}
The assertion is immediate for $a=0$. If
$\Prob(\mathcal A_{a-1}\mid F_0)=0$, it is also immediate for $a$. Otherwise,
\[
 \Prob(\mathcal A_a\mid F_0)
 =\Prob(\mathcal A_{a-1}\mid F_0)
  \Prob(\mathcal A_a\mid\mathcal A_{a-1},F_0)
 \le q^{-\sum_{b=1}^a\rho_b}.
\]
Taking $a=m$ in \eqref{eq:normal-tower-induction} gives the required conditional bound.
Writing $L=d+1$, the number $\rho_a$ is the real length of the half-open interval
$[ae,(a+1)e)\cap[0,L)$.
For $1\le a\le m$, these half-open intervals are pairwise disjoint and consecutive, and their union is
$[e,(m+1)e)\cap[0,L)$.
Because $e\le d$, one has $L>e$. Therefore
\[
 \sum_{a=1}^m\rho_a
 =\operatorname{length}_{\mathbf R}\bigl([e,(m+1)e)\cap[0,L)\bigr)
 =\min\set{me,L-e}
 =\min\set{me,d-e+1}.
\]
This proves the claim.
\end{proof}

\begin{remark}\label{rem:first-order-block}
For $m=1$, the full expansion reduces at the relevant level to
$F=F_0+Q(t_j)H$,
and \eqref{eq:triangular} becomes the familiar affine-linear normal derivative
$\partial_jF \equiv\partial_jF_0+Q'(t_j)H \pmod{Q(t_j)}$.
Thus the detailed one-block entropy estimate is exactly the first stage of the normal tower.
\end{remark}

\section{Recursive estimates and arithmetic globalization}\label{sec:recursion}

Fix $m,d,e,q$ with
$m\ge1,\qquad \operatorname{char}\F_q>m,\qquad 1\le e\le d$.
Put
$\varepsilon_m(d,e,q)=2b\Gamma_m(d)q^{-e}$.
For $0\le s\le r$, let $\beta_s(d,e,q)$ be the supremum of
$\Prob_{F\in\cR_I(d)}\bigl(\dim W_I^{(m)}(F)\ge1\bigr)$
over all subsets $I\subseteq[r]$ with $\abs I=s$ and all previously chosen tuples of irreducible degree-$e$ orbit polynomials. By \eqref{eq:terminal-dimension},
$\beta_r(d,e,q)=0$.

\begin{proposition}[One recursive step]\label{prop:jet-recursion}
If $0\le s<r$ and $\varepsilon_m(d,e,q)<1$, then
\begin{equation}\label{eq:jet-recursion}
 \beta_s(d,e,q)
 \le
 \frac{r-s}{1-\varepsilon_m(d,e,q)}
 \left(
 \beta_{s+1}(d,e,q)
 +\Gamma_m(d)q^{-\Lambda_m(d,e)}
 \right).
\end{equation}
\end{proposition}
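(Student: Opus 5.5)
The plan is to randomize the orbit polynomial independently of $F$ and compare $\beta_s$ with $\beta_{s+1}$ by a Fubini argument. Fix $I\subseteq[r]$ with $\abs I=s<r$ and previously chosen irreducible tuple $\boldsymbol Q_I$, and let $F$ be uniform in $\cR_I(d)$. For each $j\in[r]\setminus I$, draw $Q_j$ uniformly from $\cI_e(q)$, independently of $F$. Let $\mathcal B=\set{\dim W_I^{(m)}(F)\ge1}$.

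On $\mathcal B$, we attach to $F$ (by a deterministic rule) an integral curve $C_F\subseteq W_I^{(m)}(F)_{\bar k}$, closed in $U_{\bar k}$, with $\deg\overline{C_F}\le\Gamma_m(d)$. This comes from \cref{lem:curve-extraction} combined with \eqref{eq:gdeg-WI-m}. We also let $j(F)$ be the smallest index $j\notin I$ for which $t_j|_{C_F}$ is nonconstant; it exists by \cref{lem:varying-coordinate}, since $C_F\subseteq Y_{I,\bar k}$. For each fixed $F\in\mathcal B$, \cref{prop:orbit-hitting} applied to the independent $Q_{j(F)}$ shows that $C_F$ meets $V(Q_{j(F)}(t_{j(F)}))$ with probability at least $1-\varepsilon_m(d,e,q)$. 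Averaging over $F$ and splitting according to the value of $j(F)$ gives
\[
 (1-\varepsilon_m)\Prob(\mathcal B)\le\sum_{j\notin I}\Prob\bigl(\mathcal B\cap\mathcal H_j\bigr),
 \qquad
 \mathcal H_j=\set{C_F\cap V(Q_j(t_j))\ne\varnothing}.
\]
By Fubini, it then suffices to show, for each fixed $j$ and each fixed $Q_j\in\cI_e(q)$, that
\[
 \Prob_F(\mathcal B\cap\mathcal H_j)\le\beta_{s+1}+\Gamma_m(d)q^{-\Lambda_m(d,e)}.
\]
Summing over the at most $r-s$ indices $j$ and dividing by $1-\varepsilon_m$ then yields \eqref{eq:jet-recursion}.

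To prove this bound, fix $j$ and $Q=Q_j$, and write the expansion \eqref{eq:normal-qadic-expansion} of $F$ with independent uniform blocks $F_a$ (\cref{cor:uniform-randomness}). The equations defining $W_{I\cup\set j}^{(m)}$ form a subset of those defining $W_I^{(m)}$. Moreover, by \cref{lem:WI-interpretation} the scheme $W_{I\cup\set j}^{(m)}(F)$ depends only on the class of $F$ modulo $J_{I\cup\set j}$, so it equals $W_{I\cup\set j}^{(m)}(F_0)$ with $F_0$ uniform in $\cR_{I\cup\set j}(d)$. Hence
\[
 W_I^{(m)}(F)\cap Y_{I\cup\set j}\subseteq W_{I\cup\set j}^{(m)}(F_0).
\]
On $\mathcal H_j$, a geometric point of $C_F$ lies in the slice, so its image closed point $y$ lies in $W_{I\cup\set j}^{(m)}(F_0)$. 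At this $y$ the full set of equations of $W_I^{(m)}(F)$ holds, and in particular $\partial_j^{[a]}F(y)=0$ for $1\le a\le m$. We now split into two cases.

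In the first case, $\dim W_{I\cup\set j}^{(m)}(F_0)\ge1$. This has probability at most $\beta_{s+1}$, because the tuple $(\boldsymbol Q_I,Q_j)$ consists of irreducible degree-$e$ polynomials and is therefore admissible in the supremum defining $\beta_{s+1}$.

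In the second case, $W_{I\cup\set j}^{(m)}(F_0)$ is zero-dimensional, hence has at most $\Gamma_m(d)$ geometric points by \eqref{eq:number-points-zero}, and so at most $\Gamma_m(d)$ closed points. We condition on $F_0$, which fixes this finite set of closed points. We then take a union bound over its points $y$ of the event $\set{\partial_j^{[a]}F(y)=0,\ 1\le a\le m}$. By \cref{lem:normal-tower}, each such event has conditional probability at most $q^{-\Lambda_m(d,e)}$. Integrating over $F_0$ bounds this case by $\Gamma_m(d)q^{-\Lambda_m(d,e)}$.

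The main obstacle is making the probabilistic bookkeeping honest. The curve $C_F$ and the index $j(F)$ depend on $F$, whereas the normal-tower estimate requires conditioning on $F_0$ for a fixed $Q$. This is why I randomize the $Q_j$ independently of $F$, select $C_F$ and $j(F)$ by a measurable (deterministic) rule, and only afterwards fix $Q_j$ and decompose $F$. In this order, the hitting estimate is applied pointwise in $F$, and the tower estimate is applied pointwise in $Q_j$ without either interfering with the other. The remaining point to check is that the closed point $y$ extracted from $C_F$ lies among the fixed finitely many closed points of $W_{I\cup\set j}^{(m)}(F_0)$; this is exactly the inclusion displayed above.
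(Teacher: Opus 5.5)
Your proposal is correct and follows essentially the same route as the paper: a deterministically chosen controlled curve and varying coordinate, the orbit-hitting bound averaged over an independent random slice, and then, for each fixed slice, the split into the $\beta_{s+1}$ case and the conditional union bound over at most $\Gamma_m(d)$ closed points via the normal-tower lemma. The only difference is bookkeeping: you draw an independent $Q_j$ for every unsliced direction and sum over the $r-s$ indices, whereas the paper draws one uniformly random direction $J$ with a single $Q$. Both versions produce the same factor $r-s$.
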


\begin{proof}
Fix $I$, $\abs I=s$, and the prior orbit polynomials. Let
$E_I(F)=\set{\dim W_I^{(m)}(F)\ge1}$.
For every $F$ satisfying $E_I(F)$, choose an integral curve
$C_F\subseteq W_I^{(m)}(F)_{\overline{\F}_q}$ with $\deg\overline C_F\le\Gamma_m(d)$,
using \cref{lem:degree-WI-m,lem:curve-extraction}. Since $C_F\subseteq Y_{I,\overline{\F}_q}$, every sliced coordinate is constant on $C_F$. By \cref{lem:varying-coordinate}, there is an index
$j_F\in[r]\setminus I$
such that $t_{j_F}|_{C_F}$ is nonconstant. The probability space for $F$ is finite, so these choices may be fixed before introducing the auxiliary randomness.

Choose an index $J$ uniformly from $[r]\setminus I$ and, independently, choose
$Q\in\cI_e(q)$ uniformly. Conditional on $J=j_F$, \cref{prop:orbit-hitting} gives
$\Prob_Q\bigl(C_F\cap V(Q(t_J))\ne\varnothing\bigr) \ge1-\varepsilon_m(d,e,q)$.
Therefore
\begin{equation}\label{eq:bad-to-hit}
 \frac{1-\varepsilon_m(d,e,q)}{r-s}\one_{E_I(F)}
 \le
 \Prob_{J,Q}\left(
 W_I^{(m)}(F)\cap V(Q(t_J))\ne\varnothing
 \right).
\end{equation}
Averaging over uniform $F\in\cR_I(d)$ gives
\begin{equation}\label{eq:averaged-bad-to-hit}
 \frac{1-\varepsilon_m(d,e,q)}{r-s}\Prob_F(E_I(F))
 \le
 \Prob_{F,J,Q}\left(
 W_I^{(m)}(F)\cap V(Q(t_J))\ne\varnothing
 \right).
\end{equation}

Fix now $j\notin I$ and $Q\in\cI_e(q)$, and use the expansion
$F=\sum_{a\ge0}Q(t_j)^aF_a$.
On the new slice $Y_{I\cup\set{j}}$, every jet equation involving only directions different from $j$ is the corresponding equation for $F_0$. Hence, on reduced supports,
\begin{equation}\label{eq:jet-intersection-inclusion}
 W_I^{(m)}(F)\cap V(Q(t_j))
 \subseteq
 W_{I\cup\set{j}}^{(m)}(F_0)
 \cap\bigcap_{a=1}^mV\bigl(\partial_j^{[a]}F\bigr).
\end{equation}
If $W_{I\cup\set{j}}^{(m)}(F_0)$ is positive-dimensional, the event is charged to $\beta_{s+1}$. Otherwise, condition on $F_0$. At this point the finite set of closed points of $W_{I\cup\set{j}}^{(m)}(F_0)$, their residue fields, and the affine constant terms in all normal equations are fixed; only the independent blocks $F_1,F_2,\ldots$ remain random. The reduced support has at most $\Gamma_m(d)$ geometric points and hence at most that many closed points. All schemes and functions on the right side of \eqref{eq:jet-intersection-inclusion} are defined over $\F_q$. Hence, if that right side contains a geometric point, its finite Frobenius orbit is contained in the same intersection and determines a closed point $y$ of $W_{I\cup\set{j}}^{(m)}(F_0)$. Each pure normal equation then vanishes as an element of $\kappa(y)$. At each such closed point, \cref{lem:normal-tower} bounds the conditional probability of those equations by
$q^{-\Lambda_m(d,e)}$. The conditional union bound, followed by averaging over $F_0$, therefore gives
\begin{align}
 &\Prob_F\left(
 W_I^{(m)}(F)\cap V(Q(t_j))\ne\varnothing
 \right)\notag\\
 &\qquad\le
 \beta_{s+1}(d,e,q)
 +\Gamma_m(d)q^{-\Lambda_m(d,e)}.
 \label{eq:fixed-slice-upper}
\end{align}
No independence between distinct point conditions is used. Averaging \eqref{eq:fixed-slice-upper} over $J$ and $Q$, and combining with \eqref{eq:averaged-bad-to-hit}, proves \eqref{eq:jet-recursion}.
\end{proof}

\begin{theorem}[Local higher-jet orbit-slicing estimate]\label{thm:local-jet}
Assume
$\varepsilon_m(d,e,q)\le\frac12$.
Then
\begin{equation}\label{eq:local-jet}
 \Prob_{F\in A_{\le d}}
 \bigl(\dim W_\varnothing^{(m)}(F)\ge1\bigr)
 \le A_r\Gamma_m(d)q^{-\Lambda_m(d,e)},
\end{equation}
where
\begin{equation}\label{eq:Ar}
 A_0=0,
 \qquad
 A_r=2^rr!\sum_{h=0}^{r-1}\frac1{2^hh!}
 <\exp(1/2)2^rr!\quad(r\ge1).
\end{equation}
\end{theorem}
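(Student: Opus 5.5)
The plan is to iterate \cref{prop:jet-recursion} downward from $s=r$ to $s=0$, starting from $\beta_r=0$. First identify the left side of \eqref{eq:local-jet} with $\beta_0(d,e,q)$: when $I=\varnothing$, $\cR_\varnothing(d)=A_{\le d}$, and there are no prior orbit polynomials. So the quantity is exactly the one probability whose supremum defines $\beta_0$. Write $\varepsilon=\varepsilon_m(d,e,q)\le\frac12$ and $X=\Gamma_m(d)q^{-\Lambda_m(d,e)}$. Since $\varepsilon<1$, the recursion applies at every level $0\le s<r$. Using $1/(1-\varepsilon)\le2$, it gives
\[
 \beta_s\le 2(r-s)\bigl(\beta_{s+1}+X\bigr).
\]

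Next, solve this linear recursion explicitly. I will show by downward induction on $s$ that
\[
 \beta_s\le c_s X,\qquad c_r=0,\qquad c_s=2(r-s)(c_{s+1}+1).
\]
Put $k=r-s$ and $a_k=c_{r-k}$. Then $a_0=0$ and $a_k=2k(a_{k-1}+1)$. Dividing by $2^kk!$ gives
\[
 \frac{a_k}{2^kk!}=\frac{a_{k-1}}{2^{k-1}(k-1)!}+\frac1{2^{k-1}(k-1)!}.
\]
Telescoping this yields
\[
 a_k=2^kk!\sum_{h=0}^{k-1}\frac1{2^hh!}.
\]
Taking $k=r$ gives $\beta_0\le A_rX$ with $A_r$ exactly as in \eqref{eq:Ar}. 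The case $r=0$ has $A_0=0$, which is consistent because $\beta_0=\beta_r=0$ by \eqref{eq:terminal-dimension}. The bound $A_r<e^{1/2}2^rr!$ follows because the sum is a proper partial sum of the exponential series for $e^{1/2}$.

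The only subtle point is to check that the hypotheses of \cref{prop:jet-recursion} hold uniformly in $s$. The quantities $\varepsilon_m(d,e,q)$, $\Gamma_m(d)$ and $\Lambda_m(d,e)$ do not depend on $s$, $I$, or the previously chosen $Q_i$. The recursion is stated for the suprema $\beta_s$, so it chains directly without further conditioning issues. I therefore expect no real obstacle beyond this bookkeeping and the telescoping computation.
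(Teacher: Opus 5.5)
Your proposal is correct and follows the paper's proof essentially verbatim. Like the paper, you iterate \cref{prop:jet-recursion} downward from $\beta_r=0$ using $1/(1-\varepsilon)\le 2$, solve $c_\ell=2\ell(c_{\ell-1}+1)$ by normalizing with $2^\ell\ell!$, bound the partial sum by the exponential series for $e^{1/2}$, and handle $r=0$ via \eqref{eq:terminal-dimension}; your explicit identification of the left side with $\beta_0$ through $\cR_\varnothing(d)=A_{\le d}$ is a detail the paper leaves implicit.
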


\begin{proof}
If $r=0$, then $I=[r]=\varnothing$ in \eqref{eq:terminal-dimension}, so the event in \eqref{eq:local-jet} is empty and the asserted bound follows from $A_0=0$. Assume $r\ge1$. Since $\beta_r=0$, \cref{prop:jet-recursion} gives
$\beta_s\le2(r-s)(\beta_{s+1}+a)$, where $a=\Gamma_m(d)q^{-\Lambda_m(d,e)}$.
The recurrence
$c_0=0,\qquad c_\ell=2\ell(c_{\ell-1}+1)$
has the solution
$c_\ell=2^\ell\ell!\sum_{h=0}^{\ell-1}\frac1{2^hh!}$.
Backward induction gives $\beta_0\le A_ra$, proving \eqref{eq:local-jet}. The strict upper bound in \eqref{eq:Ar} follows by comparison with the exponential series.
\end{proof}

\subsection{The optimal orbit degree}

\begin{lemma}[Optimal balance]\label{lem:optimal-e}
For every $d\ge1$ and
$e_m(d)=\ceil{\frac{d+1}{m+1}}$,
one has
\begin{equation}\label{eq:optimal-Lambda}
 \Lambda_m\bigl(d,e_m(d)\bigr)
 =\lambda_m(d)
 =\floor{\frac{m(d+1)}{m+1}}.
\end{equation}
No integer $1\le e\le d$ gives a larger value of $\Lambda_m(d,e)$.
\end{lemma}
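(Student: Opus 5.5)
The statement is a purely arithmetic optimization of $\Lambda_m(d,e)=\min\{me,d-e+1\}$ over integers $1\le e\le d$. The plan is to split the maximization at the crossover of the two terms. The first term $me$ is increasing in $e$ and the second $d+1-e$ is decreasing, so the maximum over integers is attained at one of the two integers adjacent to the real crossover point $e^*=(d+1)/(m+1)$.

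Concretely, I would set $L=d+1$ and first show the upper bound $\Lambda_m(d,e)\le\lambda_m(d)$ for every integer $e$. Since $\Lambda_m(d,e)$ is an integer, it suffices to prove the real inequality $\min\{me,L-e\}\le mL/(m+1)$. This follows from the convex combination
\[
 \min\{me,L-e\}\le \tfrac1{m+1}(me)+\tfrac{m}{m+1}(L-e)=\tfrac{mL}{m+1},
\]
and then taking the floor. This step already gives the ``no larger value'' assertion.

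Next I would show that $e_m(d)=\lceil L/(m+1)\rceil$ attains this bound. Write $L=(m+1)k+\rho$ with $0\le\rho\le m$.

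\begin{itemize}
\item If $\rho=0$, then $e=k$, $me=mk$ and $L-e=mk$, so the minimum is $mk=\lambda_m(d)$.
\item If $\rho\ge1$, then $e=k+1$. Here $me=mk+m$ and $L-e=mk+\rho-1$. Since $\rho-1<m$, the minimum is $mk+\rho-1$.
\end{itemize}

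In the second case I must check that $\lfloor m((m+1)k+\rho)/(m+1)\rfloor=mk+\lfloor m\rho/(m+1)\rfloor$ equals $mk+\rho-1$. This holds because $m\rho/(m+1)=\rho-\rho/(m+1)$ with $0<\rho/(m+1)<1$, so its floor is $\rho-1$.

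Finally I would verify admissibility, namely $1\le e_m(d)\le d$. Since $L\ge2$, we have $e_m(d)\ge1$. For the other inequality, $\lceil (d+1)/(m+1)\rceil\le\lceil (d+1)/2\rceil\le d$ for $d\ge1$, using $m\ge1$. The main place requiring care is the floor computation in the case $\rho\ge1$; everything else is routine.
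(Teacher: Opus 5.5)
Your proof is correct and follows essentially the same route as the paper. Both give a real upper bound $m(d+1)/(m+1)$ plus integrality for optimality, and attainment at $e_m(d)$ because there $(m+1)e\ge d+1$ makes $d-e+1$ the smaller term. The differences are cosmetic: your convex combination replaces the paper's two-case split, your division with remainder replaces the identity $d+1-\lceil (d+1)/(m+1)\rceil=\lfloor m(d+1)/(m+1)\rfloor$, and your check that $1\le e_m(d)\le d$ makes explicit an admissibility point the paper leaves implicit.
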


\begin{proof}
For every $1\le e\le d$,
$\Lambda_m(d,e)=\min\set{me,d-e+1}$.
If $(m+1)e\le d+1$, then $me\le m(d+1)/(m+1)$. If $(m+1)e\ge d+1$, then
$d-e+1\le m(d+1)/(m+1)$. For $e=e_m(d)$ one has
$(m+1)e\ge d+1$, hence $me\ge d-e+1$, and
\[
 \Lambda_m(d,e)
 =d-e+1
 =d+1-\ceil{\frac{d+1}{m+1}}
 =\floor{\frac{m(d+1)}{m+1}}.
\]
Since $\Lambda_m(d,e)$ is integral, the real upper bound proves optimality among integer $e$.
\end{proof}

\begin{corollary}[Balanced local estimate]\label{cor:balanced-local-jet}
For the fixed chart $U/\F_q$ and every fixed $m\ge1$ with
$\operatorname{char}\F_q>m$, there is $d_{U,m}$ such that, for every $d\ge d_{U,m}$,
\begin{equation}\label{eq:balanced-local-jet}
 \Prob_{F\in A_{\le d}}
 \bigl(\dim W_\varnothing^{(m)}(F)\ge1\bigr)
 \le A_r\Gamma_m(d)q^{-\lambda_m(d)}.
\end{equation}
If $U$ ranges over the finite-field fibers of one controlled integral chart from
\cref{prop:controlled-charts}, then $C_m$, $\Gamma_m$, and the threshold
$d_{U,m}$ may be chosen uniformly over all fibers of characteristic greater than $m$.
\end{corollary}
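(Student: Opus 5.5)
The plan is to apply \cref{thm:local-jet} with the balanced orbit degree $e=e_m(d)=\ceil{(d+1)/(m+1)}$ from \cref{lem:optimal-e}. The only remaining task is to check that the hypothesis $\varepsilon_m(d,e,q)\le\frac12$ holds for all large $d$, uniformly in $q$. First, $1\le e_m(d)\le d$ for $d\ge1$, since $(d+1)/(m+1)\le d$ when $m\ge1$. Hence \cref{lem:degree-WI-m} and the normal-tower estimate apply, and \cref{lem:optimal-e} gives $\Lambda_m(d,e_m(d))=\lambda_m(d)$. Substituting into \eqref{eq:local-jet} yields \eqref{eq:balanced-local-jet} with the same constant $A_r$.

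The key step is the miss-factor condition. By definition,
\[
 \varepsilon_m(d,e_m(d),q)=2b\,C_m(d+1)^{N_m}q^{-e_m(d)}
 \le 2b\,C_m(d+1)^{N_m}2^{-(d+1)/(m+1)},
\]
using $q\ge2$ and $e_m(d)\ge(d+1)/(m+1)$. The right side is a polynomial in $d$ times a decaying exponential in $d$, with all constants $b,C_m,N_m,m$ fixed. It therefore tends to $0$ as $d\to\infty$. Define $d_{U,m}$ as the least integer such that this right side is at most $\frac12$ for all $d\ge d_{U,m}$. Such an integer exists because the expression is eventually monotone decreasing; alternatively, take any threshold beyond which the bound holds. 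This choice is independent of $q$.

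For uniformity, the constants entering the argument are $b$ (from \cref{prop:controlled-charts}), $\delta$ and $\tau_m$ (which determine $C_m$ and $\Gamma_m$ via \cref{lem:higher-representatives,lem:degree-WI-m}), $r$, and $m$. Each is attached to the integral chart $\cU_\alpha$ over $R_m=\Z[1/(N_0m!)]$ and persists under base change to every finite field of characteristic greater than $m$ not dividing $N_0$. For those finitely many small characteristics that divide $N_0$ but exceed $m$, the corollary concerns only fibers of the controlled chart, which by construction live over $\Z[1/N_0]$. I would therefore read "all fibers" as those over $R_m$. The main point to verify carefully is this bookkeeping: $C_m$ in \eqref{eq:gdeg-WI-m} was produced from $\delta$, $\tau_m$, and the inequality $e\le d$ only, with no dependence on $q$ or on the fiber. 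Once that is confirmed, $\Gamma_m$ and the threshold $d_{U,m}$ defined above are uniform, and the corollary follows.
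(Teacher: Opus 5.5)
Your proposal is correct and follows the paper's proof essentially verbatim. You take $e=e_m(d)$, use $q\ge2$ and the linear growth of $e_m(d)$ against the polynomial $\Gamma_m(d)$ to force $\varepsilon_m\le\frac12$ beyond a $q$-independent threshold, and then invoke \cref{thm:local-jet} and \cref{lem:optimal-e}, with uniformity coming from the base-change-stable constants $b$, $\delta$, $\tau_m$ (your explicit check that $1\le e_m(d)\le d$ is a detail the paper leaves implicit).
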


\begin{proof}
Take $e=e_m(d)$. Since $e$ grows linearly with $d$ while $\Gamma_m(d)$ is polynomial, there is $d_{U,m}$ such that
$2b\Gamma_m(d)2^{-e}\le\frac12$
for all $d\ge d_{U,m}$. Then \cref{thm:local-jet,lem:optimal-e} apply because $q\ge2$.
For the fibers of a fixed controlled integral chart, the constants $b$, $\tau_m$, and the geometric-degree bound entering $C_m$ are uniform by
\cref{prop:controlled-charts,lem:higher-representatives,lem:degree-WI-m}; hence the same choice of $C_m$ and $d_{U,m}$ works for every such fiber.
\end{proof}

\subsection{Globalization over the arithmetic family}\label{sec:global}

Let the controlled cover and constants be as in \cref{prop:controlled-charts}. For a prime $p\nmid N_0$, write
$U_{\alpha,p}=\cU_\alpha\times\F_p$.
These opens cover $\cX_p$.

\subsubsection{Homogeneous forms and affine polynomials}

On a standard projective chart $x_a\ne0$, dehomogenization is a vector-space isomorphism
\begin{equation}\label{eq:dehomogenization}
 S_{d,p}
 \xrightarrow{\sim}
 \F_p[z_1,\ldots,z_n]_{\le d},
 \qquad
 f\longmapsto x_a^{-d}f.
\end{equation}
Thus a uniform homogeneous form becomes a uniform affine polynomial of total degree at most $d$.

\begin{lemma}[Detection on one chart]\label{lem:detection-chart}
If
$\dim J_m(f)\ge1$,
then there exists a controlled chart $U_{\alpha,p}$ such that
$\dim Z\bigl(j^m(f|_{U_{\alpha,p}})\bigr)\ge1$.
\end{lemma}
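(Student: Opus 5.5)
The plan is to use the fact that the zero scheme of a section of a locally free sheaf is a closed subscheme whose formation is compatible with restriction to opens. Since the $U_{\alpha,p}$ cover $\cX_p$, a positive-dimensional zero scheme must have positive-dimensional intersection with at least one chart.

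First, I will record the restriction compatibility. The principal-parts sheaf \eqref{eq:principal-parts-definition} is built from the diagonal of $\cX_p\times_{\F_p}\cX_p$. For an open $V\subseteq\cX_p$, the square $V\times V\subseteq\cX_p\times\cX_p$ is open and contains the diagonal of $V$, and $\mathcal I_\Delta$ restricts to the ideal of that diagonal. The infinitesimal neighbourhood $\mathcal O/\mathcal I_\Delta^{m+1}$ is supported on the diagonal, and $p_1^{-1}(V)$ meets the diagonal exactly in $\Delta_V$. Hence
\[
\mathcal P^m_{\cX_p/\F_p}(L_{d,p})\big|_V\cong\mathcal P^m_{V/\F_p}(L_{d,p}|_V),
\]
and this isomorphism carries $j^m(f|_{\cX_p})|_V$ to $j^m(f|_V)$, because both are obtained by pulling back along $p_2$. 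Consequently
\[
J_m(f)\cap V=Z\bigl(j^m(f|_V)\bigr)
\]
as subschemes of $V$. This step is the only one needing care. The standard compatibility of principal parts with open immersions, or \'etale base change, could be cited instead. On $U_{\alpha,p}$ it is also consistent with \cref{lem:etale-taylor,lem:WI-interpretation}, so the chart zero scheme agrees with $W^{(m)}_\varnothing$ for the dehomogenized $F$.

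Second, I will handle the dimension. By the convention fixed in the introduction, $\dim$ is computed after base change to $\bar{\F}_p$ on reduced schemes. The base-changed opens $U_{\alpha,p,\bar{\F}_p}$ cover $J_m(f)_{\bar{\F}_p}$. Choose an irreducible component $Z_0$ of $(J_m(f)_{\bar{\F}_p})_{\mathrm{red}}$ with $\dim Z_0\ge1$, and pick an index $\alpha$ with $Z_0\cap U_{\alpha,p,\bar{\F}_p}\ne\varnothing$. This intersection is a nonempty open subset of the irreducible space $Z_0$, so it is dense and has the same dimension $\dim Z_0\ge1$. It is contained in $Z(j^m(f|_{U_{\alpha,p}}))_{\bar{\F}_p}$ by the first step, so that zero scheme has dimension at least $1$.
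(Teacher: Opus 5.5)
Your proposal is correct and follows essentially the same route as the paper: pass to $\overline{\F}_p$, pick a positive-dimensional irreducible component of the reduced jet-zero locus, choose a chart meeting it (the paper phrases this as a chart containing the generic point $\eta_Z$), and use that a nonempty open subset of an irreducible finite-type scheme has the same dimension. Your first step spells out the compatibility of principal parts and jets with restriction to opens, which the paper states in one line as locality of the jet-zero scheme.
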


\begin{proof}
Base change the finite controlled cover to $\overline{\F}_p$. Choose a positive-dimensional irreducible component
$Z\subseteq\bigl(J_m(f)_{\overline{\F}_p}\bigr)_{\mathrm{red}}$
and let $\eta_Z$ be its generic point. Some base-changed chart
$U_{\alpha,p}\times_{\F_p}\overline{\F}_p$ contains $\eta_Z$. Vanishing of the principal-parts jet is local and commutes with this field extension. Hence the nonempty open subset of $Z$ cut out by that chart lies in the corresponding local jet-zero scheme and has dimension $\dim Z\ge1$. Since Krull dimension is invariant under extension of the ground field, the local jet-zero scheme over $\F_p$ itself has dimension at least one.
\end{proof}

\begin{proof}[Proof of \cref{thm:jet-global}]
If $r=0$, every fiber $\cX_p$ is zero-dimensional and the event is empty. Assume $r\ge1$. Choose $p_0$ larger than $m$ and every prime divisor of $N_0$. By the uniformity assertion in \cref{cor:balanced-local-jet}, the finitely many controlled integral charts admit common constants and a common threshold $d_0$, depending only on the fixed embedding $\cX\subseteq\Pj^n_{\Z}$ and on $m$.

For a fixed chart, \eqref{eq:dehomogenization} identifies the uniform distribution on $S_{d,p}$ with the uniform distribution on the corresponding affine polynomial space. By \cref{lem:WI-interpretation}, the local zero scheme is $W_\varnothing^{(m)}(F)$. Hence \cref{lem:detection-chart} and a union bound over the finite cover give
\[
 \Prob_{f\in S_{d,p}}\bigl(\dim J_m(f)\ge1\bigr)
 \le
 M A_r C_m(d+1)^{N_m}p^{-\lambda_m(d)}.
\]
Absorbing the fixed factor $MA_rC_m$ into $C$ proves \eqref{eq:jet-global}.
\end{proof}

\begin{proof}[Proof of \cref{cor:poonen}]
For $m=1$, the local and global first jet-zero schemes have the same support as the Bertini singular locus, and
$N_1=r+1$ and $\lambda_1(d)=\floor{\frac{d+1}{2}}=\ceil{\frac d2}$.
Poonen's notation $(H_f\cap\cX_p)_{\mathrm{sing}}$ denotes precisely the closed set where the section is not smooth of the expected dimension $r-1$, so its support is $\BSing(H_f\cap\cX_p)$. Thus \eqref{eq:first-order-global} is the case $m=1$ of \cref{thm:jet-global}.

Fix $A>0$. Choose the degree threshold so large that
$\ceil{\frac d2}\ge A$ and $C(d+1)^{r+1}\le2^{\ceil{d/2}-A}$.
Since $p\ge2$,
\[
 C(d+1)^{r+1}p^{-\ceil{d/2}}
 \le p^{\ceil{d/2}-A}p^{-\ceil{d/2}}
 =p^{-A}.
\]
Taking $A=2$ gives a bound at most $p^{-2}$; hence \eqref{eq:Poonen-conjecture} follows, for example with any fixed $c>1$.
\end{proof}

\section{Complete intersections, consequences, and context}\label{sec:ci}

\begin{lemma}[Jacobian criterion for section complete intersections]\label{lem:ci-jacobian}
Let $X$ be smooth of pure dimension $r$ over a field $k$, let $1\le c\le r$, let $L_1,\ldots,L_c$ be invertible sheaves on $X$, and let $s_i\in H^0(X,L_i)$. Put
$Y=Z(s_1,\ldots,s_c)$.
The local rule
\begin{equation}\label{eq:ci-conormal-map-general}
 \Phi_{\mathbf s}:\bigoplus_{i=1}^cL_i^{-1}\restr Y
 \longrightarrow\Omega^1_{X/k}\restr Y,
 \qquad
 e_i^\vee\longmapsto dG_i\restr Y
\end{equation}
for $s_i=G_i e_i$ defines a canonical homomorphism. The support of
$D_{c-1}(\Phi_{\mathbf s})$ is exactly the locus where $Y$ is not smooth of relative dimension $r-c$ over $k$. Consequently,
$D_{c-1}(\Phi_{\mathbf s})=\varnothing$
if and only if $Y$ is empty or smooth of pure dimension $r-c$.
\end{lemma}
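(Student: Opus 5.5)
We prove the three assertions of \cref{lem:ci-jacobian} in order: the map is well defined, its $c\times c$ minors cut out the non-smooth locus, and the final equivalence follows. All three are local on $X$, so we work on an affine open where every $L_i$ is trivialized, $s_i=G_ie_i$, and $dt_1,\ldots,dt_r$ is a basis of $\Omega^1_{X/k}$ for suitable \'{e}tale coordinates $t_1,\ldots,t_r$.

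\emph{Canonicity.} Suppose the frame changes to $e_i'=u_ie_i$. Then $G_i'=u_i^{-1}G_i$, and hence
\[
 dG_i'=u_i^{-1}dG_i+G_i\,d(u_i^{-1}).
\]
The second term vanishes on $Y$, so $dG_i'|_Y=u_i^{-1}dG_i|_Y$. This is exactly the transition rule of $L_i^{-1}$, since $e_i'^\vee=u_i^{-1}e_i^\vee$. Therefore the local definitions glue to a global homomorphism $\Phi_{\mathbf s}$, and the Fitting ideal $D_{c-1}(\Phi_{\mathbf s})$, generated by the $c\times c$ minors of the Jacobian matrix $(\partial_jG_i)$, is independent of the choices made.

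\emph{Support.} Work at a geometric point $x\in Y$ after base change to $\bar k$; both sides are compatible with field extension. We use the standard smoothness criterion for a closed subscheme cut out by $c$ equations in a smooth scheme of pure dimension $r$: $Y$ is smooth of relative dimension $r-c$ at $x$ if and only if $dG_1(x),\ldots,dG_c(x)$ are linearly independent in $\Omega^1_X\otimes\kappa(x)$. There are two directions.

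\begin{enumerate}[label=\textup{(\alph*)},leftmargin=2.2em]
\item Suppose the differentials are independent at $x$. Extend $G_1,\ldots,G_c$ by $r-c$ of the $t_j$ to an \'{e}tale coordinate system near $x$, using the same argument as in \cref{prop:controlled-charts}. Then $Y$ is locally the fibre of an \'{e}tale map over a coordinate subspace, so it is smooth of dimension $r-c$.
\item Suppose $Y$ is smooth of dimension $r-c$ at $x$. The conormal sequence
\[
 \mathcal I/\mathcal I^2\to\Omega^1_X|_Y\to\Omega^1_Y\to0
\]
shows that the image of $\mathcal I/\mathcal I^2$ in $\Omega^1_X\otimes\kappa(x)$ has dimension $r-(r-c)=c$. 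This image is spanned by the $c$ elements $dG_i(x)$, so they are independent.
\end{enumerate}

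Independence of the $dG_i(x)$ is equivalent to some $c\times c$ minor being nonzero at $x$. Hence the support of $D_{c-1}(\Phi_{\mathbf s})$ is exactly the non-smooth locus. The degenerate case, where some component of $Y$ has dimension larger than $r-c$, is automatically included, because smoothness of relative dimension $r-c$ fails there.

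\emph{Conclusion.} $D_{c-1}(\Phi_{\mathbf s})$ is empty exactly when $Y$ is smooth of relative dimension $r-c$ at every one of its points, that is, when $Y$ is empty or smooth of pure dimension $r-c$.

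The main obstacle is direction (b): it requires the right form of the Jacobian criterion for a presentation by exactly $c$ equations. We handle it through the cotangent-space dimension count, which avoids any separate assumption that $Y$ is a local complete intersection.
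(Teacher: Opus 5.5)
Your proof is correct. It differs from the paper's in direction (b), and slightly in (a).

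For (b), the paper works in the local ring. With $R=\mathcal O_{X,x}$ and $S=R/I$ both regular, the regular-quotient theorem (Stacks Tag 00NN) gives $I=(x_1,\ldots,x_c)$ for part of a regular system of parameters. Hence $I/I^2$ is free of rank $c$. The surjection $S^c\to I/I^2$ given by the classes of the $G_i$ is then an isomorphism. Consequently the matrix $(a_{ij})$ with $G_i=\sum_j a_{ij}x_j$ is invertible modulo $I$, and the covectors $dG_i(x)=\sum_j a_{ij}(x)\,dx_j(x)$ are independent.

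You instead tensor the conormal sequence with $\kappa(x)$, which is legitimate by right exactness, and count dimensions. The kernel of $\Omega^1_X\otimes\kappa(x)\to\Omega^1_Y\otimes\kappa(x)$ has dimension $r-(r-c)=c$ and is spanned by the $c$ vectors $dG_i(x)$, so they are independent.

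Your route is shorter. It needs only local freeness of $\Omega^1$ of the expected rank on smooth schemes, and it applies equally at non-closed points. The paper's route gives a bit more structure, namely that the classes of the given $G_i$ form a basis of the free module $I/I^2$ at $x$. The lemma does not need this.

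For (a), the paper simply cites the Jacobian criterion, while you sketch it through the map $(G_1,\ldots,G_c,t_{j_1},\ldots,t_{j_{r-c}})$. There you should add one point: these differentials form a basis of $\Omega^1_X$ on a neighbourhood of $x$, not only at $x$. This follows from Nakayama together with equality of ranks. It is what makes the map \'{e}tale near $x$, so that $Y$ is locally an \'{e}tale preimage of a coordinate subspace.
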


\begin{proof}
If $e_i'=u_i e_i$, then $G_i'=u_i^{-1}G_i$ and
$dG_i'=u_i^{-1}dG_i-u_i^{-2}G_i\,du_i$.
After restriction to $Y$ the second term vanishes, so the local maps in \eqref{eq:ci-conormal-map-general} glue according to the transition functions of $L_i^{-1}$.

It remains to identify the rank locus. Rank and smoothness are preserved under extension of the ground field, and every nonempty finite-type scheme over a field has a geometric point, so it is enough to compare the two loci at closed points after extending $k$ to an algebraic closure. Let $x\in Y$ be such a closed geometric point. Trivialize the $L_i$ near $x$ and write $s_i=G_i e_i$. If
$\operatorname{rank}\bigl(dG_1(x),\ldots,dG_c(x)\bigr)=c$,
then the Jacobian criterion shows that the common zero scheme is smooth of codimension $c$, hence of dimension $r-c$, at $x$.

Conversely, suppose that $Y$ is smooth of dimension $r-c$ at $x$. Let
$R=\mathcal O_{X,x}$, $I=(G_1,\ldots,G_c)$, and $S=R/I=\mathcal O_{Y,x}$.
The rings $R$ and $S$ are regular local of dimensions $r$ and $r-c$, respectively. By the regular-local quotient theorem \cite[Lemma 10.106.4, Tag 00NN]{StacksProject}, there is a regular system of parameters
$x_1,\ldots,x_r$ of $R$ such that
$I=(x_1,\ldots,x_c)$.
Thus $I$ is generated by a regular sequence and $I/I^2$ is a free $S$-module of rank $c$, with basis the classes of $x_1,\ldots,x_c$. The given elements $G_1,\ldots,G_c$ generate $I$, so their classes induce a surjection
$S^c\longrightarrow I/I^2$.
This is a surjection between free modules of the same finite rank over the local ring $S$, and hence is an isomorphism. Therefore the classes of the $G_i$ form an $S$-basis of $I/I^2$.

Write
$G_i=\sum_{j=1}^c a_{ij}x_j \qquad(1\le i\le c)$
with $a_{ij}\in R$. Modulo $I$, the matrix $\overline A=(\overline a_{ij})$ is the change-of-basis matrix from the classes of $x_1,\ldots,x_c$ to the classes of $G_1,\ldots,G_c$ in $I/I^2$; hence $\overline A\in\operatorname{GL}_c(S)$. Its image $A(x)$ in $\operatorname{Mat}_c(\kappa(x))$ is therefore invertible. Differentiating and evaluating at $x$ gives
$dG_i(x)=\sum_{j=1}^c a_{ij}(x)\,dx_j(x)$,
because every $x_j$ vanishes at $x$. Since $X$ is smooth at the geometric point $x$, the covectors $dx_1(x),\ldots,dx_r(x)$ form a basis of the cotangent space. The invertibility of $A(x)$ therefore implies that $dG_1(x),\ldots,dG_c(x)$ are linearly independent. This proves the asserted equality of supports and the final equivalence.
\end{proof}

We now prove \cref{thm:ci-global}. Fix one controlled chart $U$ and polynomials
$G_1,\ldots,G_h\in A$, with $0\le h<r$,
of degrees at most $D$. For
$K=\set{k_1,\ldots,k_h}\subseteq[r]$
put
$M_K=(D_{k_b}G_a)_{1\le a,b\le h}$ and $\Delta_K=\det M_K$,
with $\Delta_\varnothing=1$, and define
$U_K=U\cap V(G_1,\ldots,G_h)\cap D(\Delta_K)$.
Let $J=[r]\setminus K$. For $j\in J$, write
$v_j=(D_jG_a)_{a=1}^h$
and set
\begin{equation}\label{eq:pivot-derivation}
 E_{K,j}=\Delta_KD_j-
 \sum_{b=1}^h\bigl(\operatorname{adj}(M_K)v_j\bigr)_bD_{k_b}.
\end{equation}
For $h=0$, the sum is empty and $E_{\varnothing,j}=D_j$.

\begin{proposition}[Uniform Jacobian-pivot charts]\label{prop:pivot}
Every nonempty $U_K$ is smooth of pure dimension $r-h$, and
\begin{equation}\label{eq:pivot-etale-map}
 (t_j)_{j\in J}:U_K\longrightarrow\A^{r-h}_k
\end{equation}
is \'{e}tale. The derivations $E_{K,j}$ are tangent to $U_K$ and satisfy
\begin{equation}\label{eq:pivot-duality}
 E_{K,j}(t_\ell)=\Delta_K\Delta_j\delta_{j\ell}
 \qquad(j,\ell\in J).
\end{equation}
Let $\overline{U_K}$ denote the scheme-theoretic closure of $U_K$ in the fixed projective closure $\overline U$. There is a constant $C_h$, depending only on the original controlled chart and on $h$, such that
\begin{align}
 \gdeg(\overline{U_K})&\le C_h(D+1)^h,
 \label{eq:pivot-degree}\\
 \deg B_K&\le C_h(D+1),
 \label{eq:pivot-boundary}\\
 \deg\bigl(\text{chosen polynomial coefficient representatives of }E_{K,j}\bigr)&\le C_h(D+1),
 \label{eq:pivot-coefficients}
\end{align}
where $B_K$ is a homogeneous polynomial for which
\begin{equation}\label{eq:pivot-principal-open-scheme}
 U_K=\overline{U_K}\cap D_+(B_K)
\end{equation}
as schemes.
\end{proposition}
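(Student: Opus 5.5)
The plan is to treat the three groups of assertions separately: the smoothness and \'{e}tale claim, the tangency identities, and the degree bounds. Each will be reduced to results already established for the controlled chart $U$.

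\emph{Smoothness and \'{e}taleness.} At a geometric point $x\in U_K$, the covectors $dt_1(x),\ldots,dt_r(x)$ form a basis of the cotangent space of $U$. The operators $D_i/\Delta_i$ are the dual basis, so
\[
 dG_a(x)=\sum_i \Delta_i(x)^{-1}D_iG_a(x)\,dt_i(x).
\]
Hence $\Delta_K(x)\ne0$ says that the $h\times h$ block of $(D_{k_b}G_a)$ is invertible, up to the unit factors $\Delta_{k_b}$. Therefore $dG_1(x),\ldots,dG_h(x)$ are independent, and they stay independent modulo the span of $\{dt_j(x):j\in J\}$.

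The Jacobian criterion, in the form used in \cref{lem:ci-jacobian}, then shows that $U\cap V(G_1,\ldots,G_h)$ is smooth of dimension $r-h$ near $x$. Its cotangent space is the quotient of $\langle dt_1,\ldots,dt_r\rangle$ by $\langle dG_a\rangle$. By the invertibility just noted, the images of $dt_j$ for $j\in J$ form a basis of this quotient. So $(t_j)_{j\in J}$ induces an isomorphism on differentials between smooth schemes, and it is \'{e}tale by the same criterion invoked in \cref{prop:controlled-charts}.

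\emph{Tangency.} By the adjugate identity $\operatorname{adj}(M_K)M_K=\Delta_K I$, one computes
\[
 E_{K,j}(G_a)=\Delta_KD_jG_a-\sum_b(\operatorname{adj}(M_K)v_j)_b\,(M_K)_{ab}.
\]
Care is needed over whether $M_K$ or its transpose is intended, and I would fix the index convention so that this expression is $\Delta_K(v_j)_a-(M_K\operatorname{adj}(M_K)v_j)_a=0$. Thus $E_{K,j}$ preserves the ideal $(G_1,\ldots,G_h)$, so it induces a derivation on $U_K$; localization at $\Delta_K$ is harmless. Since $D_{k_b}(t_\ell)=0$ for $\ell\in J$, \eqref{eq:pivot-duality} follows directly from $D_i(t_\ell)=\Delta_i\delta_{i\ell}$.

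\emph{Degree bounds.}
\begin{itemize}
\item The coefficients in \eqref{eq:pivot-coefficients} are polynomials of degree at most $h(D+\tau)$ in the representatives $D_iG_a$, whose degrees are controlled by \eqref{eq:derivative-degree}, combined with the $\tau$-degree coefficients of $D_i$. This gives the bound $C_h(D+1)$.
\item For \eqref{eq:pivot-principal-open-scheme}, take $B_K=B\cdot x_a\cdot\Delta_K^{\mathrm{hom}}$, where $\Delta_K^{\mathrm{hom}}$ is the homogenization of a polynomial representative of $\Delta_K$. This has degree at most $b+1+h(D+\tau)$, which is \eqref{eq:pivot-boundary}.
\item For \eqref{eq:pivot-degree}, the set $\overline{U_K}$ is a union of components of $\overline U\cap V(G_1^{\mathrm{hom}},\ldots,G_h^{\mathrm{hom}})$ not contained in $V(B_K)$. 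Hence \cref{lem:principal-open-closure} and $h$ applications of \cref{lem:mixed-bezout} give $\gdeg\le\delta D^h$.
\end{itemize}

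The main obstacle is the scheme-theoretic (not merely reduced) identity \eqref{eq:pivot-principal-open-scheme}. I would take $\overline{U_K}$ to be the scheme-theoretic closure of $U_K$ in $\overline U$. Because $U_K$ is open in $Z=\overline U\cap V(G^{\mathrm{hom}})$ and cut out by $B_K$, its closure inside $D_+(B_K)$ is $U_K$ itself; scheme-theoretic closure commutes with restriction to opens for quasicompact immersions. This yields $\overline{U_K}\cap D_+(B_K)=U_K$.

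Finally, $\gdeg$ only sees reduced components, so the Bézout bound applies. All constants depend only on $\delta,b,\tau,h$.
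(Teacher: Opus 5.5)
Your proposal is correct and follows essentially the same route as the paper. The shared steps are the cotangent-basis form of the Jacobian criterion (with the unit factors $\Delta_{k_b}$), the identity $M_K\operatorname{adj}(M_K)=\Delta_KI_h$ for tangency, a boundary polynomial built from $B$ and a homogenized representative of $\Delta_K$, and \cref{lem:principal-open-closure,lem:mixed-bezout} for the degree of the closure. The only real variation is in how you get $\overline{U_K}\cap D_+(B_K)=U_K$ as schemes: you use the fact that scheme-theoretic images of quasi-compact immersions are compatible with restriction to opens, whereas the paper writes $\overline{U_K}$ explicitly as $\Proj\bigl(S/(I(T_K):B_K^\infty)\bigr)$ and uses the saturation--contraction identity. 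These amount to the same fact; separately, the paper's homogenization to the common degree $D+1$ just sidesteps the zero and constant edge cases that your $\delta D^h$ bound has to dismiss.
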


\begin{proof}
If $U_K=\varnothing$, then its scheme-theoretic closure is empty. Take $B_K=B$. The principal-open identity and the degree and boundary bounds are immediate, while the coefficient estimate follows from the final degree computation below, which does not use nonemptiness. Hence assume from now on that $U_K\ne\varnothing$.

Since $D_i=\Delta_i\partial_i$ and every $\Delta_i$ is a unit, one has
$dG_a=\sum_{\ell=1}^r\Delta_\ell^{-1}D_\ell(G_a)\,dt_\ell$.
Thus the determinant of the coefficient matrix in the pivot directions $K$ is
$\Delta_K/\prod_{b=1}^h\Delta_{k_b}$, a unit multiple of $\Delta_K$. On $D(\Delta_K)$ the differentials $dG_1,\ldots,dG_h$ are independent. The Jacobian criterion therefore shows that $U_K$ is smooth of codimension $h$ in $U$, hence of pure dimension $r-h$. Solving the pivot differentials in terms of the remaining differentials shows that the classes of $dt_j$, $j\in J$, form a basis of $\Omega^1_{U_K/k}$. Since both $U_K$ and $\A^{r-h}_k$ are smooth over $k$, the transitivity triangle for cotangent complexes gives
$L_{U_K/\A^{r-h}_k}=0$.
The morphism \eqref{eq:pivot-etale-map} is locally of finite presentation, so the infinitesimal criterion implies that it is \'{e}tale.

The identity
$M_K\operatorname{adj}(M_K)=\Delta_KI_h$
gives
$E_{K,j}(G_a)=0 \qquad(1\le a\le h)$.
Thus $E_{K,j}$ descends to the quotient by $(G_1,\ldots,G_h)$. The resulting derivation extends uniquely to the localization at $\Delta_K$, and hence defines a derivation on $U_K$. If $\ell\in J$, then
$D_{k_b}(t_\ell)=0$ and $D_j(t_\ell)=\Delta_j\delta_{j\ell}$, which proves \eqref{eq:pivot-duality}.

We now record the closure scheme, rather than only its reduced support. Let $S=k[x_0,\ldots,x_n]$. Homogenize every $G_a$ to the common prescribed degree $D+1$ with respect to the standard-chart coordinate. This does not change its zero scheme on $U$, and it also covers the case $G_a=0$ without invoking an undefined polynomial degree. Let $I(T_K)\subseteq S$ be the homogeneous ideal obtained by adding these homogenizations to the ideal of $\overline U$. Choose a polynomial representative of $\Delta_K$ in the fixed affine coordinates and let $\widetilde\Delta_K$ be its homogenization. Because $U_K$ is nonempty, this representative is nonzero on $U$ and hence is not the zero polynomial. Put
\[
 B_K=B\widetilde\Delta_K,
 \qquad
 I_K=I(T_K):B_K^\infty,
 \qquad
 \overline{U_K}=\Proj(S/I_K).
\]
The standard contraction identity for localization gives
\begin{equation}\label{eq:pivot-saturation-contraction}
 I_K=\bigl(I(T_K)S_{B_K}\bigr)\cap S
     =I(T_K):B_K^\infty.
\end{equation}
Thus $\Proj(S/I_K)$ is the scheme-theoretic closure in $\Pj^n_k$ of $T_K\cap D_+(B_K)$. In particular,
\[
 \overline{U_K}\cap D_+(B_K)
 =T_K\cap D_+(B_K)
 =U\cap V(G_1,\ldots,G_h)\cap D(\Delta_K)
 =U_K
\]
as schemes. On reduced supports, \eqref{eq:pivot-saturation-contraction} deletes precisely the irreducible components of $(T_K)_{\mathrm{red}}$ contained in $V(B_K)$. Hence $(\overline{U_K})_{\mathrm{red}}$ is the union of the remaining irreducible components. Therefore repeated application of \cref{lem:mixed-bezout}, with each equation assigned the prescribed degree $D+1$, gives
$\gdeg(\overline{U_K}) \le\delta(D+1)^h \le C_h(D+1)^h$.

For the remaining degree estimates, use the fixed polynomial representatives of the original derivations. From \eqref{eq:derivative-degree},
$\deg(D_iG_a)\le D+\tau$.
Hence a polynomial representative of $\Delta_K$ has degree at most $h(D+\tau)$, and
$\deg B_K\le b+h(D+\tau)\le C_h(D+1)$.
Every coefficient in \eqref{eq:pivot-derivation} is a sum of products consisting of at most $h$ entries $D_iG_a$ and one coefficient of an original derivation. Its degree is therefore at most $C_h(D+1)$. This proves \eqref{eq:pivot-degree}--\eqref{eq:pivot-coefficients} and the scheme-theoretic identity \eqref{eq:pivot-principal-open-scheme}.
\end{proof}

\begin{corollary}[Moving-chart hypersurface estimate]\label{cor:moving-local}
For the fixed finite-field chart $U/\F_q$, there are constants $C,D_0$, depending only on its controlled data and on $r$, such that for every $D\ge D_0$, every choice of $G_1,\ldots,G_h$ of degrees at most $D$, every pivot $K$, and every uniform $F\in A_{\le D}$,
\begin{equation}\label{eq:moving-local}
 \Prob_F\bigl(\dim\BSing_{U_K}(F)\ge1\bigr)
 \le C(D+1)^{r+1}q^{-\ceil{D/2}}.
\end{equation}
If $U$ ranges over the finite-field fibers of one controlled integral chart from
\cref{prop:controlled-charts}, the same constants $C,D_0$ may be chosen uniformly over all such fibers.
\end{corollary}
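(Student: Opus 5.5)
The plan is to rerun the $m=1$ orbit-slicing recursion of \cref{sec:recursion} with the controlled chart $U$ replaced by the pivot chart $U_K$. The sliced coordinates are now $t_j$ for $j\in J=[r]\setminus K$, the derivations are $E_{K,j}$, and the boundary is $B_K$. By \cref{prop:pivot}, $(t_j)_{j\in J}:U_K\to\A^{r-h}_k$ is \'etale and $E_{K,j}(t_\ell)=\Delta_K\Delta_j\delta_{j\ell}$, where $\Delta_K\Delta_j$ is a unit on $U_K$. The $t_j$ are still original degree-one ambient variables, so \cref{lem:filtered-normal-form} and \cref{cor:uniform-randomness} apply verbatim to $F\in A_{\le D}$ sliced by $Q_j(t_j)$ with $j\in J$. \Cref{lem:jacobian-description} transfers with the same proof, so $\BSing_{U_K}(F)$ is the support of $V_{U_K}(F,E_{K,j}F:j\in J)$. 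For iterated slices $I\subseteq J$ we use the sliced analogue $W_I(F)=Y_I\cap V(F,E_{K,j}F:j\in J\setminus I)$. Tangential well-definedness modulo $Q_i(t_i)$ holds because $E_{K,j}(t_i)=0$ for $i\ne j$ in $J$. The normal-tower bound of \cref{lem:normal-tower} with $m=1$ also goes through: the diagonal term is $\Delta_K\Delta_jQ'(t_j)H$, which is a unit times $H$ on the slice, so \cref{lem:evaluation-entropy} again gives $q^{-\min\{e,D-e+1\}}$.

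The main step is the uniform degree bookkeeping, which replaces \cref{lem:degree-WI-m,lem:boundary-values}. By \eqref{eq:pivot-coefficients}, each $E_{K,j}F$ has a polynomial representative of degree at most $D+C_h(D+1)\le C(D+1)$. Homogenize all equations to a common prescribed degree and apply \cref{lem:principal-open-closure,lem:mixed-bezout} inside $\overline{U_K}$, using \eqref{eq:pivot-degree}:
\[
\gdeg(W_I(F))\le C_h(D+1)^h\,e^{|I|}\,\bigl(C(D+1)\bigr)^{r-h-|I|+1}\le \Gamma'(D):=C'(D+1)^{r+1}.
\]
This is exactly where the exponent $r+1$ arises: the $h$ factors from the chart plus the $r-h+1$ jet equations. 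Curve extraction (\cref{lem:curve-extraction}) and the varying-coordinate lemma (which uses only quasi-finiteness of the \'etale map) are unchanged. In the boundary-value lemma, $b$ is replaced by $\deg B_K\le C_h(D+1)$, so the miss probability in \cref{prop:orbit-hitting} becomes $\varepsilon\le 2C_h(D+1)\Gamma'(D)q^{-e}$. This is still polynomial in $D$ times $q^{-e}$.

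Finally, take $e=\ceil{(D+1)/2}$. Then $\Lambda_1(D,e)=\ceil{D/2}$ by \cref{lem:optimal-e}. Choose $D_0$ so that $\varepsilon\le\tfrac12$ for $q\ge2$ and $D\ge D_0$, which is possible because $e$ is linear in $D$. The recursion of \cref{prop:jet-recursion} and the backward induction of \cref{thm:local-jet} over $s=0,\ldots,r-h$ then give the bound $A_{r-h}\Gamma'(D)q^{-\ceil{D/2}}$, which is at most $C(D+1)^{r+1}q^{-\ceil{D/2}}$. Uniformity over fibers holds because $\delta,b,\tau$ and $C_h$ depend only on the integral chart. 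The main obstacle I expect is checking that the closure $\overline{U_K}$ controls the degree of the sliced loci in the same way as before, including the case $\Delta_K$ vanishing on components and empty $U_K$, where the bound is trivial. The recursion itself transfers formally.
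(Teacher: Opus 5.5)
Your proposal is correct and follows essentially the same route as the paper. The paper also reruns the first-order orbit recursion on $U_K$: it slices in the $t_j$ with $j\in J$, uses the derivations $E_{K,j}$ and the boundary $B_K$, and takes the complexity $\delta_K L_K(D)^{s+1}\ll(D+1)^{r+1}$ from \cref{prop:pivot}. The only difference is cosmetic: you choose $e=\lceil (D+1)/2\rceil$ where the paper chooses $e=\lceil D/2\rceil$, and both give $\min\{e,D-e+1\}=\lceil D/2\rceil$.
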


\begin{proof}
If $U_K$ is empty, there is nothing to prove. Put
$s=r-h$ and, for $j\in J$, $u_{K,j}=\Delta_K\Delta_j\in\Gamma(U_K,\mathcal O_{U_K})^\times$.
By \eqref{eq:pivot-duality}, the derivations
$\widetilde\partial_{K,j}=u_{K,j}^{-1}E_{K,j}$
are dual to the \'{e}tale coordinates $(t_j)_{j\in J}$ on $U_K$. In particular,
\begin{equation}\label{eq:moving-initial-singular-scheme}
 \BSing_{U_K}(F)
 =\bigl(V_{U_K}(F,E_{K,j}F:j\in J)\bigr)_{\mathrm{red}}.
\end{equation}

We now reconstruct the entire first-order orbit recursion on this moving chart. Fix temporarily an integer $e$ with $1\le e\le D$. For $I\subseteq J$ and monic irreducible polynomials $Q_i\in\F_q[T]$ of degree $e$, define
\begin{align}
 Y_{K,I}
 &=U_K\cap\bigcap_{i\in I}V(Q_i(t_i)),
 \label{eq:moving-YKI}\\
 W_{K,I}(F)
 &=Y_{K,I}\cap
   V\bigl(F,E_{K,\ell}F:\ell\in J\setminus I\bigr),
 \qquad F\in\cR_I(D).
 \label{eq:moving-WKI}
\end{align}
The map $(t_j)_{j\in J}:U_K\to\A^s$ is \'{e}tale, and every $Q_i$ is separable. Hence $Y_{K,I}$ is smooth of pure dimension $s-\abs I$, with the unsliced $t_\ell$, $\ell\in J\setminus I$, as \'{e}tale coordinates. For $i\in I$ and $\ell\in J\setminus I$, \eqref{eq:pivot-duality} gives
$E_{K,\ell}(Q_i(t_i))=0$.
It follows from the product rule that the equations in \eqref{eq:moving-WKI} depend only on the class of $F$ modulo the prior slice ideal $(Q_i(t_i):i\in I)$. Moreover, after division by the units $u_{K,\ell}$, they are exactly the equation of $F|_{Y_{K,I}}$ and all of its first tangential coordinate derivatives. Thus $W_{K,I}(F)$ is the first-jet zero scheme of $F|_{Y_{K,I}}$. In particular, \eqref{eq:moving-initial-singular-scheme} identifies the reduced support of $W_{K,\varnothing}(F)$ with $\BSing_{U_K}(F)$, while
\begin{equation}\label{eq:moving-terminal}
 \dim W_{K,J}(F)\le0.
\end{equation}

Let
$\delta_K=\gdeg(\overline{U_K})$ and $b_K=\deg B_K$,
and let $\tau_K$ bound the degrees of the polynomial coefficients of all $E_{K,j}$. Set
\begin{equation}\label{eq:moving-L-Gamma}
 L_K(D)=D+\tau_K+1,
 \qquad
 \Gamma_K(D)=\delta_K L_K(D)^{s+1}.
\end{equation}
At a stage with $a=\abs I$ orbit slices, the equations defining $W_{K,I}(F)$ consist of $a$ equations of degree $e$ and $s-a+1$ equations of degree at most $L_K(D)$. The scheme-theoretic principal-open description in \eqref{eq:pivot-principal-open-scheme}, followed by \cref{lem:principal-open-closure,lem:mixed-bezout}, therefore gives
\begin{equation}\label{eq:moving-degree-bound}
 \gdeg(W_{K,I}(F))
 \le\delta_Ke^aL_K(D)^{s-a+1}
 \le\Gamma_K(D),
\end{equation}
because $e\le D\le L_K(D)$. Hence a zero-dimensional $W_{K,I}(F)$ has at most $\Gamma_K(D)$ geometric points. If it is positive-dimensional, \cref{lem:curve-extraction} supplies an integral curve of projective degree at most $\Gamma_K(D)$. Since the coordinate map on $Y_{K,I}$ is \'{e}tale, at least one unsliced coordinate is nonconstant on that curve. Applying the boundary-value and orbit-hitting arguments with the principal boundary $B_K$ shows that a random degree-$e$ irreducible orbit slice in that coordinate misses the curve with probability at most $\varepsilon_K(D,e,q)$, where $\varepsilon_K(D,e,q)=2b_K\Gamma_K(D)q^{-e}$.

For $0\le a\le s$, let $\beta_{K,a}(D,e,q)$ be the supremum of
$\Prob_{F\in\cR_I(D)}\bigl(\dim W_{K,I}(F)\ge1\bigr)$
over all $I\subseteq J$ with $\abs I=a$ and all previously chosen degree-$e$ irreducible orbit polynomials. By \eqref{eq:moving-terminal}, $\beta_{K,s}=0$. We claim that, whenever $a<s$ and $\varepsilon_K(D,e,q)<1$,
\begin{equation}\label{eq:moving-recursion}
 \beta_{K,a}(D,e,q)
 \le
 \frac{s-a}{1-\varepsilon_K(D,e,q)}
 \left(
   \beta_{K,a+1}(D,e,q)
   +\Gamma_K(D)q^{-\mu(D,e)}
 \right),
 \qquad
 \mu(D,e)=\min\set{e,D-e+1}.
\end{equation}

To prove the claim, fix $I$ and the prior orbit polynomials. For every $F$ for which $W_{K,I}(F)$ is positive-dimensional, choose a controlled curve in its geometric reduced support and one unsliced coordinate $t_j$ that is nonconstant on that curve. Choose independently a uniform index in $J\setminus I$ and a uniform $Q\in\cI_e(q)$. Exactly as in \eqref{eq:bad-to-hit}--\eqref{eq:averaged-bad-to-hit}, averaging the orbit-hitting estimate gives
\begin{align}
 &\frac{1-\varepsilon_K(D,e,q)}{s-a}
 \Prob_F\bigl(\dim W_{K,I}(F)\ge1\bigr)
 \notag\\
 &\qquad\le
 \Prob_{F,j,Q}\bigl(W_{K,I}(F)\cap V(Q(t_j))\ne\varnothing\bigr).
 \label{eq:moving-bad-to-hit}
\end{align}

Fix now $j\in J\setminus I$ and $Q\in\cI_e(q)$. By \cref{cor:uniform-randomness}, the current uniform remainder has the exact independent decomposition
\begin{equation}\label{eq:moving-decomposition}
 F=F_0+Q(t_j)H,
 \quad
 F_0\sim\operatorname{Unif}(\cR_{I\cup\set{j}}(D)),
 \quad
 H\sim\operatorname{Unif}(\cR_I(D-e)).
\end{equation}
On the new slice $Q(t_j)=0$ one has
\begin{align}
 F&=F_0,
 \notag\\
 E_{K,\ell}F&=E_{K,\ell}F_0
 &&(\ell\in J\setminus(I\cup\set{j})),
 \label{eq:moving-tangential}\\
 E_{K,j}F&=E_{K,j}F_0+u_{K,j}Q'(t_j)H.
 \label{eq:moving-normal}
\end{align}
The first two identities imply the inclusion of geometric supports
\begin{equation}\label{eq:moving-intersection-inclusion}
 W_{K,I}(F)\cap V(Q(t_j))
 \subseteq
 W_{K,I\cup\set{j}}(F_0)\cap V(E_{K,j}F).
\end{equation}
If $W_{K,I\cup\set{j}}(F_0)$ is positive-dimensional, this event is charged to $\beta_{K,a+1}$. Otherwise condition on $F_0$. The closed points of $W_{K,I\cup\set{j}}(F_0)$ and the affine terms $E_{K,j}F_0(y)$ are then fixed, while $H$ remains independent and uniform. The reduced support has at most $\Gamma_K(D)$ geometric points and hence at most that many closed points. All schemes and functions in \eqref{eq:moving-intersection-inclusion} are defined over $\F_q$. Therefore, if its right side contains a geometric point, the corresponding Frobenius orbit determines a closed point $y$ of $W_{K,I\cup\set{j}}(F_0)$ at which $E_{K,j}F$ vanishes in $\kappa(y)$.

Fix one such closed point $y$. Since $Q(t_j(y))=0$ and $Q$ is irreducible, the minimal polynomial of $t_j(y)$ over $\F_q$ is $Q$. Put
$h_0=\mu(D,e)=\min\set{e,D-e+1}$.
The polynomials
$1,t_j,\ldots,t_j^{h_0-1}$
belong to $\cR_I(D-e)$ and have linearly independent values in $\kappa(y)$. Therefore the evaluation map
$\cR_I(D-e)\longrightarrow\kappa(y)$
has $\F_q$-rank at least $h_0$. The coefficient $u_{K,j}Q'(t_j)$ in \eqref{eq:moving-normal} is a unit at $y$. Since $H$ is independent of $F_0$ and uniform in $\cR_I(D-e)$, the conditional probability that the affine equation $E_{K,j}F(y)=0$ holds is at most $q^{-h_0}$. A union bound over the closed points gives
\begin{align}
 &\Prob_F\bigl(W_{K,I}(F)\cap V(Q(t_j))\ne\varnothing\bigr)
 \notag\\
 &\qquad\le
 \beta_{K,a+1}(D,e,q)
 +\Gamma_K(D)q^{-\mu(D,e)}.
 \label{eq:moving-fixed-slice}
\end{align}
No independence between conditions at distinct closed points is used. Combining \eqref{eq:moving-bad-to-hit} and \eqref{eq:moving-fixed-slice} proves \eqref{eq:moving-recursion}.

If $\varepsilon_K(D,e,q)\le1/2$, the same backward induction as in \cref{thm:local-jet} gives
\begin{equation}\label{eq:moving-recursion-solved}
 \Prob_F\bigl(\dim\BSing_{U_K}(F)\ge1\bigr)
 \le A_s\Gamma_K(D)q^{-\mu(D,e)}.
\end{equation}
We now substitute the moving-chart complexity. By \cref{prop:pivot},
$\delta_K\le C(D+1)^h$, $b_K\le C(D+1)$, and $\tau_K\le C(D+1)$.
Since $s=r-h$, \eqref{eq:moving-L-Gamma} gives
\begin{equation}\label{eq:moving-Gamma-final}
 \Gamma_K(D)
 \le C(D+1)^h(D+1)^{s+1}
 =C(D+1)^{r+1}.
\end{equation}
Take $e=\ceil{D/2}$. Then
$\mu(D,e)=\ceil{D/2}$,
and, uniformly for $q\ge2$,
$\varepsilon_K(D,e,q) \le C(D+1)^{r+2}2^{-\ceil{D/2}}$.
The last expression is at most $1/2$ for every $D\ge D_0$, where $D_0$ depends only on the controlled chart and on $r$. Equations \eqref{eq:moving-recursion-solved} and \eqref{eq:moving-Gamma-final} prove \eqref{eq:moving-local}. All constants used above arise from the uniform integral chart data, the fixed integer $r$, and degree inequalities; hence the same $C,D_0$ work for every allowed finite-field fiber.
\end{proof}

\begin{proof}[Proof of \cref{thm:ci-global}]
Choose $p_0$ larger than every prime divisor of $N_0$, so that the controlled charts of \cref{prop:controlled-charts} exist on every $\cX_p$ with $p\ge p_0$. By the uniformity assertion in \cref{cor:moving-local}, the finitely many controlled integral charts and the finitely many values $0\le h<c$ admit a common moving-chart threshold; choose $d_0$ at least that threshold. On each standard chart, dehomogenization carries the uniform distribution on homogeneous forms to the uniform distribution on the corresponding polynomial space.

Relabel the equations so that
$d_1\le\cdots\le d_c$.
This changes neither their product distribution up to relabelling nor the final degeneracy locus. Let $\Sigma_i$ be the rank-degeneracy locus for the first $i$ equations, and put
$E_i=\set{\dim\Sigma_i\ge1}$ and $E_0=\varnothing$.

Fix $i$ and condition on $f_1,\ldots,f_{i-1}$. Suppose $E_{i-1}$ does not occur. Base change all loci to $\overline{\F}_p$, in accordance with the convention for dimension. Let $Z$ be a positive-dimensional irreducible component of $(\Sigma_{i,\overline{\F}_p})_{\mathrm{red}}$. At the generic point $\eta_Z$, the first $i-1$ columns of the base-changed map \eqref{eq:global-ci-differential-map} have rank $i-1$; otherwise $\eta_Z$ would belong to $\Sigma_{i-1,\overline{\F}_p}$, whose dimension is at most zero. Choose a controlled standard chart whose base change contains $\eta_Z$; trivialize the relevant line bundles there, and write $G_a$ for the dehomogenization of $f_a$. One of the finitely many $(i-1)\times(i-1)$ Jacobian minors is nonzero at $\eta_Z$; choose the corresponding pivot $K$. The pivot function and the associated open $U_K$ are defined over $\F_p$.

On the associated $U_K$, the tangent bundle is the common kernel of
$dG_1,\ldots,dG_{i-1}$.
Consequently, on $U_K\cap V(G_i)$,
$\operatorname{rank}(dG_1,\ldots,dG_i)<i$
if and only if $dG_i$ vanishes on $TU_K$. By \eqref{eq:pivot-duality}, this is equivalent to
$E_{K,j}G_i=0\qquad(j\in J)$.
The intersection $Z\cap (U_K)_{\overline{\F}_p}$ is a nonempty open subset of the irreducible scheme $Z$; it is therefore dense and has dimension $\dim Z\ge1$. Thus $\BSing_{U_K}(G_i)$ is positive-dimensional.

Because $d_a\le d_i$ for $a<i$, \cref{cor:moving-local} applies with $D=d_i$. At the $i$th stage there are $M$ controlled charts and, on each chart, at most $\binom{r}{i-1}$ possible pivots. A union bound over these at most $M\binom{r}{i-1}$ events, with this fixed factor absorbed into the constant, gives
\begin{equation}\label{eq:ci-induction-step}
 \Prob(E_i\mid f_1,\ldots,f_{i-1})
 \le\one_{E_{i-1}}
 +C(d_i+1)^{r+1}p^{-\ceil{d_i/2}}.
\end{equation}
Taking expectations and iterating proves \eqref{eq:ci-sum}.

It remains to derive \eqref{eq:ci-min}. Put $a=r+1$. There are constants $D_a,C_a$ such that, for every $p\ge2$ and integers $d\ge D\ge D_a$,
\begin{equation}\label{eq:envelope}
 (d+1)^ap^{-\ceil{d/2}}
 \le C_a(D+1)^ap^{-\ceil{D/2}}.
\end{equation}
Indeed, group the integers into the pairs $2k-1,2k$. The maximum on each pair is
$M_k=(2k+1)^ap^{-k}$.
For all sufficiently large $k$,
\[
 \frac{M_{k+1}}{M_k}
 =\left(\frac{2k+3}{2k+1}\right)^a\frac1p
 \le1
\]
uniformly for $p\ge2$. Thus all terms with $d\ge D$ are bounded by the maximum on the pair containing $D$. If $D=2k$, this maximum equals the right side of \eqref{eq:envelope} with $C_a=1$; if $D=2k-1$, it is at most $(3/2)^a$ times that right side. After enlarging $d_0$ once more so that $d_0\ge D_a$, apply \eqref{eq:envelope} with $D=d_{\min}$ to every summand in \eqref{eq:ci-sum} and absorb the fixed number $c$ into $C'$.
\end{proof}

\subsection{Consequences, comparisons, and scope}\label{sec:consequences}

\subsubsection{Fixed powers and exponential decay}

\begin{corollary}[Fixed-power consequences]\label{cor:fixed-powers}
For every $A>0$, after increasing the relevant degree threshold, the probabilities in \eqref{eq:jet-global} and \eqref{eq:ci-min} are at most $p^{-A}$, uniformly for $p\ge p_0$. In the complete-intersection case, the estimate is uniform over all degree vectors satisfying the prescribed lower bound on $d_{\min}$.
\end{corollary}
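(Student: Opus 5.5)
The plan is to deduce both assertions directly from the explicit bounds already proved, following the pattern of the fixed-power part of the proof of \cref{cor:poonen}. For the higher-jet case, \cref{thm:jet-global} gives
\[
 \Prob_{f\in S_{d,p}}\bigl(\dim J_m(f)\ge1\bigr)\le C(d+1)^{N_m}p^{-\lambda_m(d)}
\]
for $p\ge p_0$ and $d\ge d_0$. Since $\lambda_m(d)=\floor{m(d+1)/(m+1)}\ge d/2$ for every $m\ge1$, the exponent tends to infinity linearly in $d$, while the prefactor is polynomial in $d$.

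Fix $A>0$ and enlarge the threshold $d_0$ to some $d_0(A)$ for which two conditions hold for all $d\ge d_0(A)$. First, $\lambda_m(d)\ge A$. Second, $C(d+1)^{N_m}\le 2^{\lambda_m(d)-A}$; this is possible because $\lambda_m(d)-A\ge d/2-A$ grows linearly. Since $p\ge2$ and $\lambda_m(d)-A\ge0$, we have $2^{\lambda_m(d)-A}\le p^{\lambda_m(d)-A}$. Hence
\[
 C(d+1)^{N_m}p^{-\lambda_m(d)}\le p^{\lambda_m(d)-A}p^{-\lambda_m(d)}=p^{-A},
\]
uniformly for all $p\ge p_0$. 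The only point needing care is that the comparison with base $2$ must be made before replacing $2$ by $p$, which requires the nonnegativity of $\lambda_m(d)-A$. That is why the condition $\lambda_m(d)\ge A$ is imposed.

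For complete intersections, apply the same argument to \eqref{eq:ci-min} with $d=d_{\min}$, exponent $r+1$ and constant $C'$. Choose $d_0(A)$ so that $\ceil{d/2}\ge A$ and $C'(d+1)^{r+1}\le 2^{\ceil{d/2}-A}$ for all $d\ge d_0(A)$. Then
\[
 C'(d_{\min}+1)^{r+1}p^{-\ceil{d_{\min}/2}}\le p^{-A}
\]
whenever $d_{\min}\ge d_0(A)$.

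The claimed uniformity over degree vectors is already built into \eqref{eq:ci-min}. Its right side depends only on $d_{\min}$, and the constant $C'$ and the threshold of \cref{thm:ci-global} do not depend on $d_{\max}$; this was secured by the envelope inequality \eqref{eq:envelope}, as explained in \cref{rem:dmax-error}. Hence no step of this argument is a real obstacle. It remains only to check that the new threshold depends on $A$, the embedding, and $m$ or $c$, and not on $p$ or the other $d_i$.
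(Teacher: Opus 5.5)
Your proposal is correct and takes essentially the same approach as the paper: there too the linearly growing exponents $\lambda_m(d)$ and $\ceil{d_{\min}/2}$ absorb the polynomial prefactors uniformly in $p\ge2$, exactly as in the fixed-power step of the proof of \cref{cor:poonen}. Your explicit threshold conditions, with $\lambda_m(d)\ge A$ ensuring that replacing base $2$ by base $p$ is legitimate, simply spell out what the paper's one-line proof asserts.
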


\begin{proof}
The exponents $\lambda_m(d)$ and $\ceil{d_{\min}/2}$ grow linearly in the relevant minimum degree, whereas the remaining factors are polynomial. Since $p\ge2$, a fixed positive proportion of either linear exponent absorbs the corresponding polynomial factor uniformly in $p$.
\end{proof}

More precisely, for fixed $m$ and $0<\eta<m/(m+1)$, there is a degree threshold such that
\begin{equation}\label{eq:jet-exponential}
 \Prob\bigl(\dim J_m(f)\ge1\bigr)\le p^{-\eta d}.
\end{equation}
For complete intersections, every $0<\eta<1/2$ similarly gives
\begin{equation}\label{eq:ci-exponential}
 \Prob\bigl(\dim\Sigma_c(\mathbf f)\ge1\bigr)
 \le p^{-\eta d_{\min}}.
\end{equation}
In particular, the first-order choice $\eta=1/3$ gives a bound of the form $p^{-d/3}$.

\subsubsection{The filtered quotient}

After the decomposition $F=F_0+Q(t_j)H$, the term $F_0$ is uniform in the filtered remainder space, not in the full polynomial space. Replacing it by a new full-space random polynomial would change the recursive distribution. By \cref{lem:filtered-normal-form}, the spaces $\cR_I(d)$ identify with the images of the degree filtration in $A/(Q_i(t_i):i\in I)$, and
\begin{equation}\label{eq:filtered-summary}
 \cR_I(d)=
 \bigoplus_{a\ge0}Q(t_j)^a\cR_{I\cup\set{j}}(d-ae)
\end{equation}
shows that the random blocks retain their exact product distribution at every step.

\subsubsection{The two roles of an orbit}

A degree-$e$ orbit is used twice. Geometrically, the boundary-value estimate and disjointness of root orbits show that the probability of missing a controlled curve is $O(\Gamma_m(d)q^{-e})$. Algebraically, every closed point of the slice has $\F_{q^e}\hookrightarrow\kappa(y)$, so a $Q$-adic block supplies up to $e$ independent evaluation conditions. The first $m$ normal blocks contribute $\Lambda_m(d,e)=\min\{me,d-e+1\}$. Balancing the two linear terms gives the proportion $m/(m+1)$; for $m=1$, it gives the half-degree exponent.

\subsubsection{Comparison with the $p$-power method}

Poonen's derivative-decoupling method and its variants use
$\frac{\partial}{\partial t_i}(g^pt_i)=g^p$.
The available auxiliary degree is $\floor{(d-1)/p}$. This is effective when $d/p\to\infty$, but it remains bounded when $d\asymp p$; see \cite{Poonen2004,Poonen2007,BucurKedlaya2012,AsgarliLoveYip2025}. Orbit slicing instead uses
$\partial_i\bigl(Q(t_i)H\bigr)|_{Q(t_i)=0}=Q'(t_i)H$,
where $e=\deg Q$ is chosen independently of the characteristic. The full $Q$-adic expansion gives the corresponding triangular identities for higher normal Taylor coefficients.

\subsubsection{The projective and quasiprojective cases}

In the projective case, a fixed ample divisor meets every positive-dimensional closed subvariety, and one may organize the induction using deterministic divisors. This is the relevant geometric input in Wang's $p^{-2}$ estimate under the fiber hypotheses of \cite[Lemma~6.3]{Wang2022}. An affine curve can avoid a fixed divisor through the boundary. Here the deterministic intersection statement is replaced by the estimate that a random orbit slice misses a bounded-degree affine curve with probability $O(\Gamma_m(d)q^{-e})$. The boundary-value lemma makes this estimate uniform.

\subsubsection{A finite-field local form}

The local proof has a formulation independent of arithmetic spreading out. Fix $m\ge1$ and assume $\operatorname{char}\F_q>m$. Let $U\subseteq\A^n_{\F_q}$ be smooth, with fixed ambient coordinates $z_1,\ldots,z_n$, and suppose that: distinct ambient coordinates $t_i=z_i$ give an \'{e}tale map $U\to\A^r_{\F_q}$; derivations $D_i$ satisfy $D_i(t_j)=\Delta_i\delta_{ij}$ with each $\Delta_i$ a unit; the $D_i$ have polynomial representatives of uniformly bounded degree in the same ambient variables; and there is a projective closure with $U=\overline U\cap D_+(B)$, $\deg B\le b$, and $\gdeg(\overline U_{\overline{\F}_q})\le\delta$. All filtrations and all equations $Q(t_i)$ are measured in the original ambient coordinates. This last condition matters: for a general polynomial \'{e}tale coordinate, one may have $\deg Q(t_i)\ne\deg Q$, and the filtered direct sum \eqref{eq:filtered-direct-sum} need not hold with the stated degree filtration. Under these hypotheses, \cref{cor:balanced-local-jet} is a finite-field theorem. The arithmetic result follows by constructing finitely many such charts with uniform data, and \cref{cor:moving-local} allows these data to vary polynomially with previously chosen equations.

\subsubsection{Dependencies and scope}

The proof does not use the Weil conjectures, Lang--Weil estimates, or the $abc$ conjecture. Its inputs are spreading out and \'{e}tale criteria, projective degree inequalities, normalization of curves and boundary divisors, the elementary count of irreducible polynomials over finite fields, and linear algebra in filtered quotient spaces. The results concern positive-dimensional jet-zero and rank-degeneracy loci. They do not supply the separate closed-point analysis needed to compute the full density of everywhere regular arithmetic hypersurface sections in \cite{Poonen2004}; the first-order theorem supplies the uniform estimate isolated in Conjecture~5.2.

\subsubsection{Relation to previous work}\label{sec:literature}

Poonen's finite-field Bertini theorem separates singular points according to residue degree and uses characteristic-$p$ derivative decoupling \cite{Poonen2004,Poonen2007}. Complete-intersection and semiample variants retain this sieve structure \cite{BucurKedlaya2012,ErmanWood2015}. Results with prescribed subschemes or local conditions alter the linear system while continuing to use closed-point sieves \cite{Poonen2008,Wutz2016,Gunther2017}. Motivic Euler products and Taylor-condition theorems allow broader local conditions \cite{BiluHowe2021,Bertucci2026}; their principal asymptotic parameter is the degree in a fixed finite-field geometry.

Poonen--Slavov use random hyperplane sections to control exceptional loci in Bertini irreducibility, and Kmentt--Shute treat higher-codimensional linear sections \cite{PoonenSlavov2022,KmenttShute2022}. Those arguments use linear sections and point-counting statistics. The slice here is the nonlinear closed-point fiber $Q(t_i)=0$. The same defining equation provides the normal Taylor block, and iteration uses the exact filtered decomposition \eqref{eq:q-adic-sum}.

Slavov and Tseng study parameter spaces of hypersurfaces with positive-dimensional singular loci and analyze large components of the corresponding discriminant strata \cite{Slavov2015,Tseng2020}. Lindner relates defect to large singularity and obtains finite-field density consequences \cite{Lindner2020}. Poonen--Stoll prove lower bounds for discriminant valuations in degenerations with multiple or positive-dimensional singularities \cite{PoonenStoll2025}. These results address different parameter regimes or different local questions.

Asgarli--Love--Yip prove that suitable Galois orbits impose the expected number of independent conditions on hypersurfaces \cite{AsgarliLoveYip2025}. Here the orbit is sampled as a coordinate slice of a singular component, and its defining equation is also used in the polynomial expansion. Zhang--Yang's anti-Bertini examples concern degree-one sections after varying the embedding \cite{ZhangYang2026}; they do not address high-degree density for a fixed embedding. Ajit--Bertucci study preservation of Hilbert--Samuel multiplicity under random sections \cite{AjitBertucci2026}.

The distinguishing ingredients of the present argument are the irreducible coordinate slice $Q(t_i)=0$, the boundary-controlled curve-hitting estimate, the residue-field inclusion $\F_{q^e}\subseteq\kappa(y)$, the normal $Q$-adic Taylor tower, and the filtered recursion preserving the exact distribution. Jacobian-pivot charts reduce the complete-intersection statement to the first-order case. This comparison is intended to identify the mechanism, not to make an exhaustive priority claim.

\subsubsection*{Conclusion}

Frobenius-orbit slicing supplies an auxiliary degree independent of the characteristic. The geometric estimate controls the probability that the slice misses a curve in a positive-dimensional jet-zero locus, while the same slice yields independent normal Taylor conditions. Their balance gives the exponent $\floor{m(d+1)/(m+1)}$. The first-order specialization proves the arithmetic Bertini estimate in \cite[Conjecture~5.2]{Poonen2004}. For complete intersections, Jacobian-pivot charts reduce each new positive-dimensional rank defect to the first-order hypersurface estimate and give the termwise bound \eqref{eq:ci-sum}, hence the uniform envelope \eqref{eq:ci-min}.

\section*{Funding}
No funding was received for this work.

\section*{Declaration of competing interest}
The authors declare no competing interests.

\section*{Declaration of Generative AI and AI-Assisted Technologies in the Writing Process}
The author used GPT 5.5 Pro to enumerate possible cases, search for counterexamples, and generate preliminary proofs. All AI-generated suggestions were manually reviewed, modified, and independently verified by the author. No unverified AI-generated proof or mathematical claim was incorporated into the final manuscript.

\printcredits

\begingroup
\sloppy

\endgroup

\end{document}